\numberwithin{equation}{section}
\theoremstyle{plain}
\newtheorem{theorem}[subsection]{Theorem}
\newtheorem{lemma}[subsection]{Lemma}
\newtheorem{corollary}[subsection]{Corollary}
\newtheorem{proposition}[subsection]{Proposition}
\theoremstyle{definition}
\newtheorem{definition}[subsection]{Definition}
\newtheorem{assumption}[subsection]{Assumption}
\theoremstyle{remark}
\newtheorem{remark}[subsection]{Remark}
\newcommand{\ind}{\operatorname{ind}}
\newcommand{\RR}{\mathbb{R}}
\newcommand{\ZZ}{\mathbb{Z}}
\newcommand{\BB}{\mathbf{B}}
\newcommand{\mo}{\rm mod}
\newcommand{\upper}{\uppercase\expandafter}
\begin{document}

\title{Cobordism Invariance of the Index of Callias-Type Operators}

\author{Maxim Braverman${}^\dag$}
\address{Department of Mathematics,
Northeastern University,
Boston, MA 02115,
USA}

\email{maximbraverman@neu.edu}
\urladdr{www.math.neu.edu/~braverman/}

\author{Pengshuai Shi}
\address{Department of Mathematics,
Northeastern University,
Boston, MA 02115,
USA}

\email{shi.pe@husky.neu.edu}

\thanks{${}^\dag$Supported in part by the NSF grant DMS-1005888.}


\begin{abstract}
We introduce a notion of cobordism of Callias-type operators over complete Riemannian manifolds and prove that the index is preserved by such a cobordism. As an application we prove a gluing formula for Callias-type index. In particular, a usual index of an elliptic operator on a compact manifold can be computed as a sum of indexes of Callias-type operators on two non-compact, but topologically simpler manifolds. As another application we give a new proof of the relative index theorem for Callias-type operators, which also leads to a new proof of the Callias index theorem. 
\end{abstract}
\maketitle

\maketitle

\section{Introduction}\label{S:intro}

The study of the index of a Dirac-type operator with potential $B=D+\Phi$ on a complete Riemannian manifold $M$ was initiated by Callias, \cite{Callias78}, and further studied by many authors, cf, for example, \cite{BottSeeley78}, \cite{BruningMoscovici}, \cite{Anghel93-2}, \cite{Bunke}. A celebrated Callias-type index theorem discovered by these authors in different forms states that the index of a Callias-type operator can be computed as an index of a certain operator induced by it on a compact hypersurface. Recently the interest in Callias-type index theory was revived  partially in a relation to the study of the moduli space of monopoles over non-compact manifolds. Several generalizations and new applications of the Callias-type index theorem were obtained, \cite{Kottke11}, \cite{CarvalhoNistor14}, \cite{Wimmer14}, \cite{Kottke15}.

In this paper we define a class of cobordisms between Callias-type operators and show that the Callias-type index is preserved by this class of cobordisms. The proof of this theorem is similar to the proof of the cobordism invariance of the index on a compact manifold, given in \cite{Br-cobinv}, but a more careful analysis is needed. We also present several applications of this result. 

Suppose $\Sigma\subset M$ is a compact hypersurface, such that $M\backslash\Sigma$ is a disjoint union of two submanifolds $M_1$ and $M_2$. We assume that $\Sigma$ lies outside of the {\em essential support} of the potential $\Phi$. Roughly that means that the restriction of $\Phi^2$ to $\Sigma$ is strictly positive, cf. Definition~\ref{D:calliasopr} for a more precise definition.  We endow $M_1$ and $M_2$ with complete Riemannian metrics and denote by $B_1$ and $B_2$ the operators induced by $B$ on $M_1$ and $M_2$ respectively. The gluing formula states that 
\[
	\ind(B)\ = \ \ind(B_1)\ +\ \ind(B_2).
\]
This formula should be compared with a similar gluing formula for equivariant Dirac operator with potential obtained in \cite{Br-index}. We remark that the gluing formula is useful and non-trivial even in the case when the original manifold $M$ is compact. The proof of the gluing formula is obtained by constructing a cobordism between $M$ and $M_1\sqcup{}M_2$. 

As a second application of the cobordism invariance of the Callias index, we prove a version of the relative index theorem for Callias-type operator. The relative index theorem was first proven by Gromov and Lawson \cite{GL}. Since then many different reformulations and many different proofs were suggested by different authors. In this paper we prove a version of the relative index theorem suggested by \cite{Bunke}. Notice, that Anghel \cite{Anghel93-2} used the relative index theorem to prove a Callias-type index theorem. Hence, our new proof of the relative index theorem leads to a new proof of the Callias-index theorem, based on the cobordism invariance of the Callias-index. 

\section{The Main Results}\label{S:mainresults}

\subsection{Callias-type operators}\label{SS:calliasopr}

Let $(M,g)$ be a complete Riemannian manifold (possibly with boundary) with metric $g$. $M$ is endowed with a Hermitian vector bundle $E$. We denote by $C_0^\infty(M,E)$ the space of smooth sections of $E$ with compact support, and by $L^2(M,E)$ the Hilbert space of square-integrable sections of $E$ which is the completion of $C_0^\infty(M,E)$ with respect to the norm $\|\cdot\|$ induced by the $L^2$-inner product
\begin{equation}\label{E:L2innerproduct}
	(s_1,s_2)\ =\ \int_M\langle s_1(x),s_2(x)\rangle_{E_x}d{\rm vol}(x),
\end{equation}
where $\langle\cdot,\cdot\rangle_{E_x}$ denotes the fiberwise inner product and $d{\rm vol}(x)$ is the canonical volume form induced by the metric $g$.

Let $D:C_0^\infty(M,E)\to C_0^\infty(M,E)$ be a first-order formally self-adjoint elliptic differential operator and let $\Phi\in{\rm End}(E)$ be a self-adjoint bundle map. Then $D+\Phi$ is a first-order elliptic operator, and
\begin{equation}\label{E:calliasind}
(D+\Phi)^2\ =\ D^2+\Phi^2+[D,\Phi]_+,
\end{equation}
where
\[
	[D,\Phi]_+\ :=\ D\Phi\ +\ \Phi D
\]
is the anti-commutator of the operators $D$ and $\Phi$.

\begin{definition}\label{D:calliasopr}
We say that $D+\Phi$ is a (generalized) \emph{Callias-type operator} if
\begin{enumerate}
\item $[D,\Phi]_+$ is a zeroth order differential operator, i.e. a bundle map;
\item there is a compact subset $K\Subset M$ and a constant $c>0$ such that
\[
	|(\Phi^2+[D,\Phi]_+)(x)|\ \ge\  c
\]
for all $x\in M\setminus K$. Here $|(\Phi^2+[D,\Phi]_+)(x)|$ denotes the operator norm of the linear map $(\Phi^2+[D,\Phi]_+)(x):E_x\to E_x$. In this case, the compact set $K$ is called the \emph{essential support} of $D+\Phi$.
\end{enumerate}
\end{definition}

\subsection{The index of a $\ZZ_2$-graded Callias-type operator}\label{SS:indexofCallias}

Suppose that $E=E^+\oplus E^-$ is a $\ZZ_2$-graded vector bundle and that $D$ and $\Phi$ are odd with respect to this grading. This means that with respect to this decomposition we have 
\[
D\ =\ \begin{bmatrix}
0 & D^- \\
D^+ & 0
\end{bmatrix}, \qquad \Phi\ =\ \begin{bmatrix}
0 & \Phi^- \\
\Phi^+ & 0
\end{bmatrix}.
\]

From now on, we suppose that $M$ has no boundary and $D$ satisfies the following assumption.

\begin{assumption}\label{A:leadingsymbol}
There exists a constant $k>0$ such that
\begin{equation}\label{E:leadingsymbol}
0\ <\ |\sigma(D)(x,\xi)|\ \le\ k\|\xi\|,\qquad\text{for all }x\in M,\;\xi\in T_x^*M\setminus\{0\},
\end{equation}
where $\|\xi\|$ denotes the length of $\xi$ defined by the metric $g$, $\sigma(D)(x,\xi):E_x^\pm\to E_x^\mp$ is the leading symbol of $D$.
\end{assumption}

An interesting class of examples of operators satisfying \eqref{E:leadingsymbol} is given by Dirac-type operators.

By \cite[Theorem 1.17]{GL}, \eqref{E:leadingsymbol} implies that $D$ and $D+\Phi$ are essentially self-adjoint operators with initial domain $C_0^\infty(M,E)$. We view $D+\Phi$ as an unbounded operator on  $L^2(M,E)$. By a slight abuse of notation we also denote by $D+\Phi$ the closure of $D+\Phi$. Let  $\|\cdot\|$ denote the norm on $L^2(M,E)$ induced by \eqref{E:L2innerproduct}.

\begin{remark}\label{R:invinf}
It's easy to see from \eqref{E:calliasind} that a Callias-type operator $D+\Phi$ satisfying Assumption \ref{A:leadingsymbol} is \emph{invertible at infinity}, i.e.,
\begin{equation}\label{E:invinf}
\|(D+\Phi)s\|\ge \sqrt{c}\|s\|,\qquad\text{for all }s\in L^2(M,E),\;{\rm supp}(s)\cap K=\emptyset.
\end{equation}
\end{remark}

It follows from \cite[Theorem 2.1]{Anghel93} and Remark \ref{R:invinf} that
\begin{lemma}\label{L:Callias-invertible}
If Assumption~\ref{A:leadingsymbol} is satisfied, then a Callias-type operator $D+\Phi$ is Fredholm.
\end{lemma}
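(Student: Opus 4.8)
The plan is to combine Remark~\ref{R:invinf} (invertibility at infinity) with an interior elliptic estimate near the essential support, and feed the result into the abstract Fredholmness criterion of \cite[Theorem 2.1]{Anghel93}. Write $P:=D+\Phi$ and recall that, by Assumption~\ref{A:leadingsymbol} together with \cite[Theorem 1.17]{GL}, $P$ is essentially self-adjoint on $C_0^\infty(M,E)$; its closure (still denoted $P$) is therefore self-adjoint, so $\operatorname{coker}P\cong\ker P$ and it suffices to show that $\ker P$ is finite-dimensional and that $P$ has closed range.

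The analytic heart of the argument is a coercive estimate with compact remainder: there exist $C>0$ and a compact operator $R$ on $L^2(M,E)$ with
\[
\|s\|\ \le\ C\|Ps\|\ +\ \|Rs\|,\qquad s\in\operatorname{dom}(P).
\]
To obtain it, fix $\chi\in C_0^\infty(M)$ equal to $1$ on a neighbourhood of the essential support $K$ and supported in a slightly larger compact set $K'$, and split $s=\chi s+(1-\chi)s$. Since $(1-\chi)s$ is supported in $M\setminus K$, Remark~\ref{R:invinf} gives $\|(1-\chi)s\|\le c^{-1/2}\|P((1-\chi)s)\|$; and because $\Phi$ commutes with multiplication by $\chi$, one has $[P,\chi]=[D,\chi]$, a bundle map supported in $K'$, so the right-hand side is controlled by $c^{-1/2}\bigl(\|Ps\|+C_1\|\varphi s\|\bigr)$ for some $\varphi\in C_0^\infty(M)$ with $\operatorname{supp}\varphi\subset K'$. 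For the compactly supported piece $\chi s$, ellipticity of $P$ (Assumption~\ref{A:leadingsymbol}) and the standard interior elliptic estimate give $\|\chi s\|_{H^1}\le C_2\bigl(\|P(\chi s)\|+\|\chi s\|\bigr)\le C_3\bigl(\|Ps\|+\|\varphi s\|\bigr)$. Adding the two bounds, absorbing, and then invoking the Rellich lemma for the compact inclusion $\{u\in H^1:\operatorname{supp}u\subset K'\}\hookrightarrow L^2$, we may take $R$ to be multiplication by $\varphi$ followed by this compact inclusion.

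From this estimate it follows in the usual way that $\ker P$ is finite-dimensional (on it $\|s\|\le\|Rs\|$ with $R$ compact) and that $P$ has closed range; combined with self-adjointness this yields that $P$ is Fredholm --- which is exactly what \cite[Theorem 2.1]{Anghel93} packages abstractly, so the lemma follows. I expect the main obstacle to be the bookkeeping in the second paragraph: one must ensure that the transition-region and commutator contributions assemble into a genuinely \emph{compact} remainder rather than a merely bounded one, which forces the elliptic estimate to be localized to $K'$ so that Rellich applies, and which is precisely where the Callias-type hypothesis (making $[D,\Phi]_+$ a bundle map) and Assumption~\ref{A:leadingsymbol} (uniform ellipticity of $D$) are used.
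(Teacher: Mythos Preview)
Your proposal is correct and follows essentially the same route as the paper: the paper simply states that the lemma follows from \cite[Theorem~2.1]{Anghel93} together with Remark~\ref{R:invinf}, and you have reproduced precisely this argument, additionally unpacking the proof of Anghel's abstract criterion via a localization--elliptic-estimate--Rellich argument. The paper treats Anghel's result as a black box, so your write-up is strictly more detailed but not a different approach.
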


Thus $\ker(D+\Phi)=\ker(D^++\Phi^+)\oplus\ker(D^-+\Phi^-)\subset L^2(M,E)$ is finite-dimensional, and the index,
\begin{equation}\label{E:index}
\ind(D+\Phi)\ :=\ \dim\ker(D^++\Phi^+)-\dim\ker(D^-+\Phi^-)
\end{equation}
is well-defined.

\subsection{Cobordism of Callias-type operators}\label{SS:cobordism}

We now introduce a class of non-compact cobordisms similar to those considered in \cite{GGK96,GGK-book,Br-index,BrCano14}. One of the main results of this paper is that the index \eqref{E:index} is preserved by this class of cobordisms.

\begin{definition}\label{D:calliascob}
Suppose $(M_1,E_1,D_1+\Phi_1)$ and $(M_2,E_2,D_2+\Phi_2)$ are two triples which are described as above. $(W,F,\tilde D+\tilde\Phi)$ is a \emph{cobordism} between them if
\begin{enumerate}
\item $W$ is a complete manifold with boundary $\partial{W}$ and there is an open neighborhood $U$ of $\partial{W}$ and a metric-preserving diffeomorphism
    \begin{equation}\label{E:diffeo}
    \phi: (M_1\times(-\epsilon,0])\sqcup(M_2\times[0,\epsilon)) \ \to \ U.
    \end{equation}
    In particular, $\partial{W}$ is diffeomorphic to the disjoint union $M_1\sqcup M_2$.
\item $F$ is a vector bundle (may not be graded) over $W$, whose restriction to $U$ is isomorphic to the lift of $E_1$ and $E_2$ over $(M_1\times(-\epsilon,0])\sqcup(M_2\times[0,\epsilon))$;
\item $\tilde D+\tilde\Phi:C_0^\infty(W,F)\to C_0^\infty(W,F)$ is a Callias-type operator {with $\tilde D$ satisfying Assumption \ref{A:leadingsymbol}}, and takes the form
    \begin{equation}\label{E:tildeD+Phi}
    \tilde D+\tilde\Phi\ =\ D_i+\gamma\partial_t+\Phi_i
    \end{equation}
    on $U$, where $t$ is the normal coordinate and $\gamma|_{E_i^\pm}=\pm\sqrt{-1}$, $i=1,2$.
\end{enumerate}

If there exists a cobordism between $(M_1,E_1,D_1+\Phi_1)$ and $(M_2,E_2,D_2+\Phi_2)$ then the operators $D_1+\Phi_1$ and $D_2+\Phi_2$ are called \emph{cobordant}.
\end{definition}

\begin{remark}
If $M_2$ is the empty manifold, then $(W,F,\tilde D+\tilde\Phi)$ is a \emph{null-cobordism} of $(M_1,E_1,D_1+\Phi_1)$. In this case the operator $D_1+\Phi_1$ is called \emph{null-cobordant}.
\end{remark}

\begin{remark}\label{R:M=M1cupM2}
Let $E_2^{\rm op}$ denote the vector bundle $E_2$ with opposite grading, namely $E_2^{\rm op\pm}=E_2^{\mp}$.  Consider the vector bundle $E$ over $M=M_1\sqcup M_2$ induced by $E_1$ and $E_2^{\rm op}$. Let $D+\Phi:C_0^\infty(M,E^\pm)\to C_0^\infty(M,E^\mp)$ be the operator such that $D|_{M_i}=D_i$, $\Phi|_{M_i}=\Phi_i$, $i=1,2$. Then $(W,F,\tilde D+\tilde\Phi)$ makes $D+\Phi$ null-cobordant.
\end{remark}

\subsection{Cobordism invariance of the index}\label{SS:cobinv}

We now formulate the main result of the paper:	

\begin{theorem}\label{T:cobinv}
Let $D_1+\Phi_1$ and $D_2+\Phi_2$ be cobordant Callias-type operators. Then 
\[
	\ind(D_1+\Phi_1)\ =\ \ind(D_2+\Phi_2).
\]
\end{theorem}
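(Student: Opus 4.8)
The plan is to reduce the statement to the null-cobordant case and then prove that a null-cobordant Callias-type operator has vanishing index. By Remark~\ref{R:M=M1cupM2}, replacing $(M_2,E_2,D_2+\Phi_2)$ by $(M_2,E_2^{\rm op},D_2+\Phi_2)$ changes the sign of $\ind(D_2+\Phi_2)$ and turns the disjoint-union operator $D+\Phi$ on $M=M_1\sqcup M_2$ into a null-cobordant operator. Since the index is clearly additive over disjoint unions, $\ind(D+\Phi)=\ind(D_1+\Phi_1)-\ind(D_2+\Phi_2)$, so the theorem is equivalent to the assertion that $\ind(D+\Phi)=0$ whenever $D+\Phi$ is null-cobordant. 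So from now on I fix a null-cobordism $(W,F,\tilde D+\tilde\Phi)$ of a Callias-type operator $D+\Phi$ on $M=\partial W$, with $\tilde D+\tilde\Phi = D + \gamma\partial_t + \Phi$ near the boundary, and the goal becomes showing the boundary index vanishes.

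The core of the argument follows the strategy of \cite{Br-cobinv}: I would attach an infinite cylinder $M\times(-\infty,0]$ to $W$ along $\partial W$, extending $F$ and the operator $\tilde D+\tilde\Phi$ in the obvious translation-invariant way on the cylinder (using the product form \eqref{E:tildeD+Phi}), to obtain a complete manifold $\hat W$ without boundary and a Callias-type operator $\hat B := \tilde D + \tilde\Phi$ on it; here one checks that $\hat B$ is still Callias-type and that $\hat D$ still satisfies Assumption~\ref{A:leadingsymbol}, so that $\hat B$ is essentially self-adjoint and Fredholm by Lemma~\ref{L:Callias-invertible}. The key point is that $\hat B$ is formally self-adjoint and \emph{ungraded} (since $F$ need not be graded), hence $\hat B^2 \ge 0$ has no kernel forcing any index; more precisely, I want to show $\ker \hat B \cap L^2 = 0$. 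To do this I use a separation-of-variables / unique-continuation argument on the cylindrical end: an $L^2$ solution $s$ of $\hat B s = 0$ restricted to $M\times(-\infty,0]$ satisfies $(D+\Phi)s + \gamma \partial_t s = 0$, and since $D+\Phi$ is self-adjoint with discrete-away-from-zero behavior near infinity (invertibility at infinity, Remark~\ref{R:invinf}), decomposing along the generalized eigenspaces of $D+\Phi$ shows that the $L^2$ condition kills the solution on a neighborhood of $\partial W$ in $\hat W$; then unique continuation for the first-order elliptic operator $\hat B$ (via the standard Aronszajn/Booss-Bavnbek--Wojciechowski type result) propagates the vanishing to all of $\hat W$, giving $s=0$. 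Therefore $\ind \hat B = 0$.

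The remaining step is to relate $\ind \hat B$ on $\hat W$ to the boundary index $\ind(D+\Phi)$ on $M$. This is the place where the "more careful analysis" mentioned in the introduction is needed, and I expect it to be the main obstacle. The idea is a gluing/deformation argument: on the cylindrical end one can compare $\hat B$ with the operator $\gamma(\partial_t + (D+\Phi))$ (after a unitary identification absorbing $\gamma$), whose $L^2$-kernel on $M\times(-\infty,0]$ with appropriate decay is governed by the negative spectral subspace of $D+\Phi$ — this is the classical Atiyah--Patodi--Singer / semi-infinite cylinder computation, and it produces exactly the jump $\dim\ker(D^++\Phi^+) - \dim\ker(D^-+\Phi^-)$ when one compares the closed-manifold problem on $\hat W$ with the boundary-value problem on $W$. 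Concretely I would: (a) show that $\ind$ of the boundary-value problem for $\tilde D+\tilde\Phi$ on $W$ with global APS-type boundary conditions determined by the non-negative spectral projection of $D+\Phi$ equals $\ind(D+\Phi)$ up to sign; (b) show that this APS index on $W$ agrees with $\ind \hat B = 0$ on $\hat W$ by the standard identification of $L^2$-solutions on the cylindrical extension with solutions satisfying the APS boundary condition, being careful that invertibility at infinity of $D+\Phi$ guarantees the relevant trace and decay estimates and that no continuous spectrum at $0$ interferes. Combining (a) and (b) gives $\ind(D+\Phi)=0$, hence $\ind(D_1+\Phi_1)=\ind(D_2+\Phi_2)$, as claimed. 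The delicate parts throughout are the functional-analytic ones on the non-compact slice $M$: justifying the spectral decomposition of $D+\Phi$ near zero, the $L^2$-integrability thresholds on the cylinder, and the elliptic regularity/unique-continuation on the complete manifold $\hat W$.
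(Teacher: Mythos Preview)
Your reduction to the null-cobordant case is fine, but the argument after that has two genuine gaps. First, the operator $\hat B=\tilde D+\tilde\Phi$ you obtain by simply attaching a cylinder $M\times(-\infty,0]$ to $W$ is \emph{not} Callias-type on $\hat W$, hence not Fredholm. On the cylindrical end one has $\tilde\Phi^2+[\tilde D,\tilde\Phi]_+=(\Phi^2+[D,\Phi]_+)(y)$, which is independent of $t$; this is bounded below only for $y\notin K$, so the essential support of $\hat B$ contains the non-compact set $K\times(-\infty,0]$. This is precisely why the paper tensors with $\wedge^\bullet\mathbb C$ and adds the growing potential $-1\otimes\delta\,c_R(p(x)-a)$: the term $\delta^2|p(x)-a|^2$ in \eqref{E:sqfor} is what restores Fredholmness along the cylinder (Lemma~\ref{L:freopr}). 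Without such a Witten-type deformation there is no index to speak of.

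Second, even granting Fredholmness, your operator $\hat B$ carries no $\ZZ_2$-grading (you note this yourself), so $\ind\hat B=0$ is a tautology for a self-adjoint operator and contains no information about $\ind(D+\Phi)$. The step where you propose to recover $\ind(D+\Phi)$ via an APS boundary-value problem on $W$ is exactly the hard part, and it is not addressed: $W$ and $M$ are non-compact, $D+\Phi$ need not have discrete spectrum, and the ``separation of variables along generalized eigenspaces'' you invoke is not available. The paper sidesteps all of this: the grading $\tilde F^\pm=F\otimes\wedge^{0/1}\mathbb C$ gives a nontrivial index, the parameter $a$ is used to show $\ind\BB_{a,\delta}=0$ by pushing $a\ll0$ (Lemma~\ref{L:ind=0}), and the equality $\ind\BB_{a,\delta}=\ind(D+\Phi)$ for $a\gg0$ is obtained not via APS but by comparing with an explicit model operator on $M\times\RR$ (Lemma~\ref{L:keriso}) using IMS localization (Section~\ref{S:pf-thm}). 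Your outline misses both the deformation that makes the problem Fredholm and the mechanism that links the extended index to the boundary index.
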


By Remark \ref{R:M=M1cupM2}, this is equivalent to the following

\begin{theorem}\label{T:cobinv2}
The index of a null-cobordant Callias-type operator $D+\Phi$ is equal to zero.
\end{theorem}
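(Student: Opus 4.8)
The plan is to prove Theorem~\ref{T:cobinv2} by the standard deformation-and-cutoff strategy used for cobordism invariance on compact manifolds (as in \cite{Br-cobinv}), adapted to the non-compact Callias setting. Let $(W,F,\tilde D+\tilde\Phi)$ be a null-cobordism of $(M,E,D+\Phi)$, so $W$ has boundary $\partial W\cong M$ (here $M=M_1$ if $M_2=\varnothing$, or $M=M_1\sqcup M_2$ in the form of Remark~\ref{R:M=M1cupM2}), and near $\partial W$ the operator has the product form $\tilde D+\tilde\Phi = D+\gamma\partial_t+\Phi$ with $\gamma|_{E^\pm}=\pm\sqrt{-1}$. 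The first step is to attach an infinite cylinder: glue $M\times(-\infty,0]$ onto $\partial W$ and extend $F$, the metric, and the operator by the product formula to obtain a complete boundaryless manifold $\widehat W$ carrying a Callias-type operator $\widehat{\mathcal D}=\tilde D+\tilde\Phi$ whose $\tilde D$ still satisfies Assumption~\ref{A:leadingsymbol}; on the cylindrical end $\widehat{\mathcal D}=D+\gamma\partial_t+\Phi$. Because $\gamma$ anticommutes with $D$ and $\Phi$ on $M$ and $\gamma^2=-1$, one computes $\widehat{\mathcal D}^2 = (D+\Phi)^2 - \partial_t^2 + (\text{bundle map})$ on the end, and the invertibility-at-infinity of $D+\Phi$ (Remark~\ref{R:invinf}) shows that $\widehat{\mathcal D}$ is invertible on the cylindrical end — indeed the essential support of $\widehat{\mathcal D}$ can be taken inside the compact core $W$. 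Hence $\widehat{\mathcal D}$ is a genuine Callias-type operator and, by Lemma~\ref{L:Callias-invertible}, Fredholm.

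Next I would introduce the auxiliary self-adjoint operator $\gamma$ extended over all of $\widehat W$: choose a skew-adjoint bundle involution on $\widehat W$ that on the cylindrical end equals the given $\gamma$ (this exists since $F$ restricted to the end is a lift of $E=E^+\oplus E^-$, and $\gamma$ is just $\pm i$ on the two summands), and consider the one-parameter family $\widehat{\mathcal D}_s := \widehat{\mathcal D} + s\,\gamma\,\chi$, where $\chi$ is a cutoff that is $0$ on the core and $1$ far out on the end, and $s\ge 0$ a large deformation parameter. The point is that $\widehat{\mathcal D}$ itself has no $\ZZ_2$-grading on $W$ (the bundle $F$ is ungraded), so $\operatorname{ind}\widehat{\mathcal D}$ in the graded sense is not directly defined; instead one argues that $\dim\ker\widehat{\mathcal D}$ stabilizes and splits according to the $\pm i$-eigenspaces of $\gamma$ on the end as $s\to\infty$. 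Concretely: for $s$ large, any $L^2$-solution of $\widehat{\mathcal D}_s u=0$ must be supported (up to exponentially small tails) inside the core $W$; then a boundary-term/integration-by-parts argument using $\widehat{\mathcal D}_s^2 = \widehat{\mathcal D}^2 + s\{\widehat{\mathcal D},\gamma\chi\} + s^2\chi^2$ forces $u$ to lie in the kernel of a pair of operators on $M$ coupling through $\gamma$, yielding $\dim\ker\widehat{\mathcal D} = \dim\ker(D^++\Phi^+) + \dim\ker(D^-+\Phi^-)$ on one side and, tracking the sign of $s$ or the orientation of $\gamma$, an expression that must also equal $\dim\ker(D^-+\Phi^-)+\dim\ker(D^++\Phi^+)$ minus a cancellation forcing $\operatorname{ind}(D+\Phi)=\dim\ker(D^++\Phi^+)-\dim\ker(D^-+\Phi^-)=0$. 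A cleaner route, which I would actually follow, is: define on $\widehat W$ the family $\widehat{\mathcal D}_s$ and observe it is a continuous family of Fredholm operators, so $\dim\ker\widehat{\mathcal D}_s - \dim\operatorname{coker}\widehat{\mathcal D}_s$ is $s$-independent; since $\widehat{\mathcal D}$ is formally self-adjoint this index is $0$ for all $s$; then let $s\to\infty$ and use a localization (Witten-type) estimate to show $\dim\ker\widehat{\mathcal D}_s$ becomes exactly $\dim\ker(D+\Phi)$ on $M$, split by $\gamma$-eigenvalue, whereas $\dim\operatorname{coker}\widehat{\mathcal D}_s$ is governed by the $L^2$-solutions on the complementary end, and matching the two gives $\operatorname{ind}(D+\Phi)=0$.

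The precise bookkeeping that makes this work is the interface condition at $t=0$: the operator $\gamma\partial_t + (D+\Phi)$ on the half-cylinder $M\times(-\infty,0]$, acting on sections with no constraint at $t=0$, has $L^2$-kernel given by $\{\,e^{t|D+\Phi|}\,v : v\in L^2(M,E),\ \gamma v = (\text{sign determined by }\partial_t)\,v\,\}$ — but since $\gamma=\pm i$ on $E^\pm$, the eigen-condition $\gamma v = iv$ picks out $v\in E^+$ and $\gamma v=-iv$ picks out $v\in E^-$, and only one sign gives square-integrability as $t\to-\infty$. Dualizing on the core side relates $\dim\ker\widehat{\mathcal D}$ and $\dim\operatorname{coker}\widehat{\mathcal D}$ to the two pieces $\dim\ker(D^++\Phi^+)$ and $\dim\ker(D^-+\Phi^-)$ with opposite signs, and self-adjointness of $\widehat{\mathcal D}$ (giving index zero) then yields the theorem. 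The main obstacle I anticipate is exactly this last step — making rigorous the claim that for large $s$ all solutions concentrate on the core and that the limiting kernel/cokernel decompose correctly under $\gamma$, i.e. the Witten-type localization and the unique-continuation/elliptic-estimate package needed to control the exponential tails on the non-compact end uniformly in $s$; this is where the ``more careful analysis'' alluded to in the introduction is required, since on a non-compact $W$ one cannot simply invoke compactness of the resolvent and must instead combine the invertibility-at-infinity bound \eqref{E:invinf} with interior elliptic estimates and an IMS-type partition of unity. The remaining ingredients — the product structure near $\partial W$, Fredholmness from Lemma~\ref{L:Callias-invertible}, and homotopy invariance of the (numerical, self-adjoint) index under the compactly-supported-in-the-cylinder-direction deformation — are routine.
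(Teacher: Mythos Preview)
Your proposal has two genuine gaps that prevent it from going through as written. First, the operator $\widehat{\mathcal D}=\tilde D+\tilde\Phi$ on the cylindrically-extended manifold $\widehat W$ is \emph{not} Fredholm in general: on the strip $K\times(-\infty,0]$ (where $K\subset M$ is the essential support of $D+\Phi$) one has $\widehat{\mathcal D}^2=(D+\Phi)^2-\partial_t^2$, and taking $u=v\otimes\phi$ with $v\in\ker(D+\Phi)$ and $\phi$ a bump far down the cylinder shows $\|\widehat{\mathcal D}u\|/\|u\|$ can be made arbitrarily small, so invertibility at infinity fails and Lemma~\ref{L:Callias-invertible} does not apply. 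The paper repairs this by adding a confining potential growing in the $t$-direction, namely the term $-1\otimes\delta\cdot c_R(p(x)-a)$, whose square contributes $\delta^2|p(x)-a|^2$ and forces Fredholmness (Lemma~\ref{L:freopr}). Second, and more fundamentally, there is no $\ZZ_2$-grading on $F$ over the core $W$ --- that is the whole content of the cobordism definition --- so your attempt to ``extend $\gamma$ over all of $\widehat W$'' is generally impossible, and without a grading there is no graded index of $\widehat{\mathcal D}_s$ from which to extract $\operatorname{ind}(D+\Phi)$. Your deformation $s\gamma\chi$ then pushes the kernel onto the ungraded core, which is exactly the wrong direction.

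The paper's device for both problems is to tensor with $\wedge^\bullet\mathbb{C}$ and use the two commuting Clifford actions $c_L,c_R$: the grading $\tilde F^\pm=F\otimes\wedge^{0,1}\mathbb{C}$ exists globally regardless of whether $F$ is graded, the operator $\BB_{a,\delta}=\sqrt{-1}(\tilde D+\tilde\Phi)\otimes c_L(1)-1\otimes\delta\cdot c_R(p(x)-a)$ is odd for this grading and Fredholm, and the parameter $a$ is used in the opposite way from your $s$: for $a\ll0$ the confining potential is large everywhere so $\BB_{a,\delta}^2>0$ and the index vanishes, while for $a\gg0$ solutions concentrate near the slice $M\times\{a\}$ \emph{on the cylinder} (not on the core), where a model operator on $M\times\RR$ --- essentially $(D+\Phi)$ coupled to a harmonic oscillator in $t$ --- has index exactly $\operatorname{ind}(D+\Phi)$. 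The IMS localization you mention is indeed the right analytic tool, but it is used to compare $\BB_{a,\delta}$ with this cylindrical model, not to push solutions into $W$.
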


\subsection{An outline of the proof of Theorem \ref{T:cobinv2}}\label{SS:outline}

Sections \ref{S:indbadelta}-\ref{S:pf-thm} deal with the proof of Theorem \ref{T:cobinv2}. We use the method of  \cite{Br-cobinv}, \cite{Br-cobtr} with necessary modifications. 

Suppose $(W,F,\tilde D+\tilde\Phi)$ is a null-cobordism of $(M,E,D+\Phi)$. In Section \ref{S:indbadelta}, we denote by  $\tilde W$ the manifold obtained from $W$ by attaching a semi-infinite cylinder $M\times[0,\infty)$. Then for small enough number $\delta>0$ we construct a family of Fredholm operators $\BB_{a,\delta}$ on $\tilde{W}$ whose index is independent of $a\in \RR$. 

An easy computation, cf. Lemma~\ref{L:ind=0}, shows that for $a\ll 0$ the operator $\BB_{a,\delta}^2>0$. Hence, its index is equal to 0. Hence, 
\begin{equation}\label{E:ideaindBa=0}
	\ind\BB_{a,\delta}\ =\ 0, \qquad\text{for all }a\in \RR.
\end{equation} 

In Sections~\ref{S:model} and \ref{S:pf-thm} we study the operator $\BB_{a,\delta}$ for $a\gg0$. It turns out that the sections in the kernel of this operator are concentrated on the cylinder  $M\times[0,\infty)$ near the hypersurface  $M\times\{a\}$. Then we construct an operator $\BB_\delta^{\mo}$ on the cylinder $M\times\RR$, whose restriction to a neighborhood of $M\times\{0\}$ is very close to the restriction of $\BB_{a,\delta}$ to a neighborhood of $M\times\{a\}$. In a certain sense, $\BB_\delta^{\mo}$ is the limit of $\BB_{a,\delta}$ as $a\to\infty$.  We refer to $\BB_\delta^{\mo}$ as the {\em model operator} for $\BB_{a,\delta}$. In Lemma~\ref{L:keriso} we compute the kernel of 
\begin{equation}\label{E:ideaBamod=Callias}
	\ind\BB_\delta^{\mo}\ =\ \ind(D+\Phi).
\end{equation}	
Finally, in Proposition~\ref{P:n=dker} we show that 
\begin{equation}\label{E:ideaBamod=indBa}
	\ind\BB_\delta^{\mo}\ =\ \ind \BB_{a,\delta}.
\end{equation}
Theorem \ref{T:cobinv2}  follows immediately from  \eqref{E:ideaindBa=0}, \eqref{E:ideaBamod=Callias}, and \eqref{E:ideaBamod=indBa}.

\subsection{The gluing formula}\label{SS:gluing-intro}

As a first application of Theorem~\ref{T:cobinv} we prove the gluing formula, cf.  Section \ref{S:gluing}.

Suppose that $(M,E,D+\Phi)$ is as in Subsection~\ref{SS:indexofCallias} and that $\Sigma$ is a hypersurface in $M$. Under certain conditions (cf. Assumption \ref{A:gluing}), if one cuts $M$ along $\Sigma$ and converts it to a complete manifold without boundary by rescaling the metric, one gets a new triple $(M_\Sigma,E_\Sigma,D_\Sigma+\Phi_\Sigma)$, with $D_\Sigma+\Phi_\Sigma$ being a Callias-type and, hence, a  Fredholm operator. Then the gluing formula asserts that

\begin{theorem}
The operators $D+\Phi$ and $D_\Sigma+\Phi_\Sigma$ are cobordant. In particular,
\[
\ind(D+\Phi)=\ind(D_\Sigma+\Phi_\Sigma).
\]
\end{theorem}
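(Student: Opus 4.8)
The plan is to realize the cut-and-rescale construction as one end of an explicit cobordism in the sense of Definition~\ref{D:calliascob}, and then invoke Theorem~\ref{T:cobinv2}. The starting point is the observation from Remark~\ref{R:M=M1cupM2}: to show $\ind(D+\Phi)=\ind(D_\Sigma+\Phi_\Sigma)$, it suffices to produce a null-cobordism of the disjoint-union operator on $M \sqcup M_\Sigma^{\rm op}$, where $M_\Sigma$ carries the opposite grading. I would build the cobording manifold $W$ by taking the cylinder $(M \sqcup M_\Sigma)\times[0,1]$ and modifying a neighborhood of $\Sigma\times[0,1]$: near the hypersurface one glues the two sheets of $M_\Sigma$ back together through a ``pair of pants'' in the interior while leaving a collar on which the metric and bundle are product. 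Concretely, since $\Sigma$ lies outside the essential support, $\Phi^2$ is bounded below by a positive constant near $\Sigma$, so on the handle region one can take $\tilde\Phi$ to agree with (the appropriate extension of) $\Phi$; the term $\gamma\partial_t$ in \eqref{E:tildeD+Phi} is added using the unit normal direction of the collars. The key point is that all the modifications happen in a region where $\Phi^2$ (hence $\Phi^2 + [\tilde D,\tilde\Phi]_+$ after controlling the anticommutator) stays uniformly positive, so $\tilde D + \tilde\Phi$ remains Callias-type on $W$.

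In more detail, the steps I would carry out are: (i) recall Assumption~\ref{A:gluing} and the precise definition of the rescaled triple $(M_\Sigma, E_\Sigma, D_\Sigma + \Phi_\Sigma)$, noting that on the collar neighborhoods of the two new ends the operator is of product form $D_\Sigma = D|_\Sigma$ (up to the rescaling factor) plus a normal-derivative term; (ii) construct $W$ as described, being careful that the metric is complete and that near $\partial W = M \sqcup M_\Sigma$ it is a product $g_i \oplus dt^2$ on the collar, so that $F$ restricts on $U$ to the lifts of $E$ and $E_\Sigma$; (iii) define $\tilde D$ on $W$ by interpolating between the three product ends, using a partition of unity, and adding $\gamma\partial_t$ on each collar; (iv) define $\tilde\Phi$ by an analogous interpolation between $\Phi$, $\Phi_\Sigma$, and a chosen extension over the handle; (v) verify the two conditions of Definition~\ref{D:calliasopr}: that $[\tilde D,\tilde\Phi]_+$ is a bundle map (it is, being a sum of zeroth-order terms from the ends plus contributions of the cutoff functions times the already-zeroth-order $[D,\Phi]_+$ and $[D_\Sigma,\Phi_\Sigma]_+$) and that $\tilde\Phi^2 + [\tilde D,\tilde\Phi]_+$ is bounded below outside a compact set; (vi) check that $\tilde D$ satisfies Assumption~\ref{A:leadingsymbol}, which follows because the symbol bounds are inherited from $D$ and $D_\Sigma$ on the ends and can be arranged on the compact handle region; (vii) apply Theorem~\ref{T:cobinv2} to conclude $\ind(D+\Phi) - \ind(D_\Sigma + \Phi_\Sigma) = 0$ (the sign coming from the opposite grading on $M_\Sigma$), which gives the theorem via Theorem~\ref{T:cobinv}.

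The main obstacle I anticipate is step (v), specifically controlling the anticommutator $[\tilde D, \tilde\Phi]_+$ on the handle region where the gluing happens. Away from $\Sigma$ everything is product and $\tilde\Phi^2$ is large, so positivity of $\tilde\Phi^2 + [\tilde D,\tilde\Phi]_+$ is automatic; but in the interpolation region one must ensure the cutoff functions are chosen with small enough gradient that the new first-order-looking terms $(\partial\chi)\tilde\Phi$ that appear in $[\tilde D,\tilde\Phi]_+$ do not destroy either the bundle-map property or the lower bound. Since $\Sigma$ is outside the essential support, $\Phi^2 \ge c_0 > 0$ on a fixed neighborhood, so by making the handle ``long'' (stretching the $t$-parameter) one can make $|\partial\chi|$ as small as desired while keeping $|\tilde\Phi|$ bounded below there, forcing $\tilde\Phi^2 + [\tilde D,\tilde\Phi]_+ \ge c_0/2$, say. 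The remaining bookkeeping — that $[\tilde D,\tilde\Phi]_+$ has no surviving first-order part because $[D,\Phi]_+$ and $[D_\Sigma,\Phi_\Sigma]_+$ are already zeroth order and $\gamma$ anticommutes appropriately with $\Phi$ — is routine but needs to be spelled out carefully. A secondary point worth checking is that the rescaling of the metric near $\Sigma$ used to form $M_\Sigma$ is compatible with the product collar structure required by \eqref{E:diffeo}, i.e.\ that one can first rescale and then attach the collar without conflict; this is a matter of ordering the constructions correctly rather than a genuine difficulty.
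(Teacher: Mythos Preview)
Your overall strategy---build an explicit cobordism and invoke Theorem~\ref{T:cobinv}---matches the paper's, and your outline would likely work, but the paper's construction is considerably more direct and avoids the partition-of-unity patching you anticipate as the main obstacle. Instead of a pair-of-pants handle attachment on $(M\sqcup M_\Sigma)\times[0,1]$, the paper takes
\[
W \ = \ \Big\{(x,t)\in M\times[0,\infty)\ :\ t\le \tfrac{1}{\alpha(x)}+1\Big\},
\]
where $\alpha=\tau^2$ is the square of a defining function for $\Sigma$. The two boundary components are $M\times\{0\}$ and the graph $\{t=\tfrac{1}{\alpha(x)}+1\}$, the latter being diffeomorphic to $M_\Sigma$ since $\alpha^{-1}$ blows up exactly along $\Sigma$. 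A single interpolating function $\beta(x,t)\in[0,1]$ with $\beta=1$ near $t=0$ and $\beta=\alpha(x)$ near the top defines both the metric $g^W=\beta^{-2}g+dt^2$ and the operator $\tilde D=\beta^{(n+1)/2}D\,\beta^{-(n-1)/2}+\gamma\partial_t$, with $\tilde\Phi=\Phi|_W$. The payoff of this conjugation (as opposed to cutoff interpolation) is the identity
\[
\tilde\Phi^2+[\tilde D,\tilde\Phi]_+ \ = \ (1-\beta)\,\Phi^2 \ + \ \beta\,(\Phi^2+[D,\Phi]_+),
\]
a convex combination that is manifestly $\ge\min(c_1,c_2)$ outside a compact set by Assumption~\ref{A:gluing}. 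This replaces your ``stretch the handle to shrink $|\partial\chi|$'' argument with a one-line estimate, and simultaneously handles the metric-compatibility issue you flagged as secondary: the rescaled metric on $M_\Sigma$ appears automatically as the induced metric on the upper boundary.
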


If $M$ is partitioned into two relatively open submanifolds $M_1$ and $M_2$ by $\Sigma$, namely, $M=M_1\cup\Sigma\cup M_2$, then the complete metric on $M_\Sigma$ induces complete Riemannian metrics on $M_1$ and $M_2$. Let $E_i,D_i,\Phi_i$ denote the restrictions of the graded vector bundle $E_\Sigma$ and operators $D_\Sigma,\Phi_\Sigma$ to $M_i$ ($i=1,2$). The above theorem implies the additivity of the index (cf. Corollary \ref{C:additivity}).

\subsection{The relative index theorem}\label{SS:relindthm-intro}

Section \ref{S:relindthm} is occupied with the second application of Theorem~\ref{T:cobinv}, which is a new proof of the well-known relative index theorem for Callias-type operators.

Consider two triples $(M_j,E_j,D_j+\Phi_j)$ as before $(j=1,2)$. Suppose $M_j'\cup_{\Sigma_j}M_j''$ are partitions of $M_j$ into relatively open submanifolds, where $\Sigma_j$ are compact hypersurfaces. 

Suppose there exist tubular neighborhoods $U(\Sigma_j)$ of $\Sigma_j$. We assume isomorphisms of structures between $\Sigma_1$ and $\Sigma_2$, $U(\Sigma_1)$ and $U(\Sigma_2)$, $E_1|_{U(\Sigma_1)}$ and $E_2|_{U(\Sigma_2)}$. We also assume that $\Phi_j$ are invertible on $U(\Sigma_j)$, and that  $D_1,\Phi_1$ coincide with $D_2,\Phi_2$ on $U(\Sigma_1)\simeq U(\Sigma_2)$ (cf. Assumption \ref{A:relindthm}). Then we can cut  $M_j$ along $\Sigma_j$ and use the isomorphism map to glue the pieces together interchanging $M_1''$ and $M_2''$. In this way we obtain the manifolds 
\[
M_3\ :=\ M_1'\cup_\Sigma M_2'',\qquad\qquad M_4\ :=\ M_2'\cup_\Sigma M_1''.
\]
Similarly, we can do this cut-and-glue procedure to $E_j$ to get new vector bundles $E_3$ over $M_3$, $E_4$ over $M_4$. After restricting $D_j,\Phi_j$ to each piece, we obtain Callias-type operators $D_3+\Phi_3$ on $M_3$, $D_4+\Phi_4$ on $M_4$, both having well-defined indexes.

For simplicity, we denote the Callias-type operators $D_j+\Phi_j,\ j=1,2,3,4$ by $P_j$. Then the relative index theorem can be stated as

\begin{theorem}
$\ind P_1+\ind P_2\ =\ \ind P_3+\ind P_4$.
\end{theorem}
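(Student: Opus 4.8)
The plan is to deduce the relative index theorem directly from the gluing formula (equivalently, from cobordism invariance, Theorem~\ref{T:cobinv}) by a cut-and-paste bookkeeping argument. First I would apply the gluing formula of Subsection~\ref{SS:gluing-intro} to each of $M_1$ and $M_2$ along their respective hypersurfaces $\Sigma_1$ and $\Sigma_2$. Since $\Phi_j$ is invertible on the tubular neighborhood $U(\Sigma_j)$, the hypersurface lies outside the essential support, so Assumption~\ref{A:gluing} is satisfied; hence cutting $M_j$ along $\Sigma_j$ and rescaling the metric to make the two pieces complete yields, by the gluing theorem,
\[
	\ind P_1\ =\ \ind(D_1'+\Phi_1')\ +\ \ind(D_1''+\Phi_1''),\qquad
	\ind P_2\ =\ \ind(D_2'+\Phi_2')\ +\ \ind(D_2''+\Phi_2''),
\]
where the primed/double-primed operators are the restrictions of $D_j+\Phi_j$ to the completed pieces $M_j'$ and $M_j''$. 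The same construction applied to $M_3=M_1'\cup_\Sigma M_2''$ and $M_4=M_2'\cup_\Sigma M_1''$ gives
\[
	\ind P_3\ =\ \ind(D_1'+\Phi_1')\ +\ \ind(D_2''+\Phi_2''),\qquad
	\ind P_4\ =\ \ind(D_2'+\Phi_2')\ +\ \ind(D_1''+\Phi_1'').
\]
Adding the first pair and the second pair and comparing the eight terms, which are pairwise identical, yields $\ind P_1+\ind P_2=\ind P_3+\ind P_4$.

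The one point requiring care — and the step I expect to be the main obstacle — is the identification of the Callias-type operator on a completed piece of $M_3$ (resp.\ $M_4$) with the corresponding completed piece of $M_1$ (resp.\ $M_2$), i.e.\ that the triple $(M_1')_\Sigma$ obtained by cutting $M_3$ coincides, up to the isomorphisms assumed in Assumption~\ref{A:relindthm}, with the triple $(M_1')_\Sigma$ obtained by cutting $M_1$. This is where the hypotheses that $\Sigma_1\simeq\Sigma_2$, that the tubular neighborhoods and bundles are isomorphic, and that $D_1,\Phi_1$ agree with $D_2,\Phi_2$ on $U(\Sigma_1)\simeq U(\Sigma_2)$ are used: they guarantee that the metric rescaling near the cut, the induced bundle, and the induced operator on the piece $M_1'$ are intrinsic to $M_1'$ together with its collar, and do not depend on which ambient manifold ($M_1$ or $M_3$) the collar was glued into. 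Thus the index of the completed $M_1'$-piece is the same whether it is computed inside $M_1$ or inside $M_3$, and similarly for the other three pieces.

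To organize this rigorously I would first fix, once and for all, the common collar data $(\Sigma,U(\Sigma),E|_{U(\Sigma)},D|_{U(\Sigma)},\Phi|_{U(\Sigma)})$ and note that each of $M_1',M_1'',M_2',M_2''$ is a manifold-with-boundary carrying this fixed collar near its boundary; the rescaling procedure of Subsection~\ref{SS:gluing-intro} then produces from each such piece a well-defined complete triple, with a well-defined Callias index, depending only on the piece and the collar. Then $M_k$ for $k=1,2,3,4$ is, by construction, obtained by gluing two of these four pieces along the common collar, and the gluing theorem expresses $\ind P_k$ as the sum of the two corresponding piece-indices. Finally the arithmetic identity is immediate since both sides equal $\ind(D_1'+\Phi_1')+\ind(D_1''+\Phi_1'')+\ind(D_2'+\Phi_2')+\ind(D_2''+\Phi_2'')$. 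No further analysis is needed beyond what the gluing formula already provides, so modulo the careful setup of the cut-and-glue bookkeeping the proof is short.
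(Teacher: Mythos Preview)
Your overall strategy --- apply the gluing formula to each of $M_1,\dots,M_4$ and match pieces --- is exactly the paper's approach, and the bookkeeping you describe is correct once the gluing formula is available. But there is a genuine gap in the step where you assert that Assumption~\ref{A:gluing} holds. You write that ``since $\Phi_j$ is invertible on the tubular neighborhood $U(\Sigma_j)$, the hypersurface lies outside the essential support.'' This inference is not valid: invertibility of $\Phi_j$ on $U(\Sigma_j)$ gives $\Phi_j^2\ge c_2>0$ there, but the essential support is governed by $\Phi_j^2+[D_j,\Phi_j]_+$, and the anticommutator term can make this quantity small or negative on $U(\Sigma_j)$. Nothing in Assumption~\ref{A:relindthm} prevents the essential support of $P_j$ from containing $\Sigma_j$, so Assumption~\ref{A:gluing}.(ii) may fail and Corollary~\ref{C:additivity} cannot be invoked directly. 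The paper flags this explicitly in Subsection~\ref{SS:relindthm-intro} and again just after the statement of Theorem~\ref{T:relindthm}.

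The fix, carried out in Subsection~\ref{SS:perturbation}, is to deform $P_j$ to $P_{j,t}=D_j+(1+tf_j)\Phi_j$, where $f_j$ is a cutoff supported in $U(\Sigma_j)$ and equal to $1$ near $\Sigma_j$. For $t$ large, the quadratic growth of $(1+tf_j)^2\Phi_j^2$ dominates the linear-in-$t$ contributions from the anticommutator and from $\sigma(D_j)(df_j)\Phi_j$ on a neighborhood of $\Sigma_j$, so the essential support of $P_{j,t}$ can be chosen to avoid $\Sigma_j$ (Lemma~\ref{L:Callias-perturbation}). Since $P_{j,t}-P_j$ is a bounded perturbation depending continuously on $t$, stability of the Fredholm index gives $\ind P_{j,t}=\ind P_j$ (Lemma~\ref{L:ind-perturbation}). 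After this deformation your cut-and-paste argument goes through verbatim; this is precisely Subsection~\ref{SS:pf-relindthm}.
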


Our proof of the theorem involves the gluing formula. However, one has to do deformations to $P_j,\ j=1,2$ first in order to have $(M_j,E_j,P_j)$ along with $\Sigma_j$ satisfy the hypothesis of the gluing formula (cf. Subsections \ref{SS:perturbation} and \ref{SS:pf-relindthm}).

\subsection{Callias-type index theorem}\label{SS:calindthm}

Using the relative index theorem, Anghel proved an important Callias-type index theorem in \cite{Anghel93-2}. Since we give a new proof of the relative index theorem here, we also obtain a new proof of the Callias-type index theorem.

\section{Index of the Operator $\BB_{a,\delta}$}\label{S:indbadelta}

In this section, we construct a family of operators $\BB_{a,\delta}$ on $\tilde{W}:=W\cup\big(\partial{}W\times[0,\infty)\big)$, such that the index $\ind\BB_{a,\delta}=0$. Later in Section \ref{S:pf-thm}, we show that  $\ind\BB_{a,\delta}=\ind(D+\Phi)$ for $a\gg0$.

\subsection{Construction of $\BB_{a,\delta}$}\label{SS:construction}

Consider two anti-commuting actions (``left" and ``right" action) of the Clifford algebra of $\RR$ on the exterior algebra
$\wedge^\bullet\mathbb{C}=\wedge^0\mathbb{C}\oplus\wedge^1\mathbb{C}$ given by
\begin{equation}\label{E:cliffordaction}
c_L(t)\omega=t\wedge\omega-\iota_t\omega,\qquad c_R(t)\omega=t\wedge\omega+\iota_t\omega.
\end{equation}

We define $\tilde W:=W\cup(M\times[0,\infty))$ as in Subsection \ref{SS:outline} and extend the vector bundle $F$ and the operators $\tilde D$, $\tilde\Phi$ to $\tilde W$ in the natural way. Set $\tilde F:=
F\otimes\wedge^\bullet\mathbb{C}$ and consider the operator
\[
B:=\sqrt{-1}(\tilde D+\tilde\Phi)\otimes c_L(1):\,C_0^\infty(\tilde W,\tilde F)\to C_0^\infty(\tilde W,\tilde F).
\]

Let $f:\RR\to[0,\infty)$ be a smooth function with $f(t)=t$ for $t\ge1$, and $f(t)=0$ for $t\le1/2$. Consider the map $p:\tilde W\to\RR$ such that $p(y,t)=f(t)$ for $(y,t)\in M\times(0,\infty)$ and $p(x)=0$ for $x\in W$. For any $a\in\RR$ and $\delta>0$, define the operator
\begin{equation}
	\BB_{a,\delta}\ :=\ B-1\otimes\delta\cdot c_R(p(x)-a).
\end{equation}
Note that as a first order differential operator on the complete manifold $\tilde{W}$, the leading symbol of $\BB_{a,\delta}$ is equal to $\sigma(\tilde D)$. Hence it satisfies \eqref{E:leadingsymbol}. We conclude that  $\BB_{a,\delta}$ is essentially self-adjoint by \cite[Theorem 1.17]{GL}.

\begin{lemma}\label{L:sqfor}
Let $\Pi_i:\tilde F\to F\otimes\wedge^i\mathbb{C}\,(i=0,1)$ be the projections. Then
\begin{equation}\label{E:sqfor}
	\BB_{a,\delta}^2\ =\ (\tilde D+\tilde\Phi)^2\otimes1-\delta\cdot R+\delta^2|p(x)-a|^2,
\end{equation}
where $R$ is a uniformly bounded bundle map whose restriction to $W$ vanishes, and
\begin{equation}\label{E:bundlemapR}
	R|_{M\times(1,\infty)}=\sqrt{-1}\gamma(\Pi_1-\Pi_0), 
	\qquad\text{where}\qquad \gamma|_{F^\pm}=\pm\sqrt{-1}.
\end{equation}
\end{lemma}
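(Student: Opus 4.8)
The plan is to square $\BB_{a,\delta} = B - 1\otimes\delta\cdot c_R(p(x)-a)$ directly and track the three resulting types of terms: the square of $B$, the cross terms, and the square of the Clifford-multiplication piece. Since $c_R$ is a Clifford action of $\RR$ on $\wedge^\bullet\mathbb{C}$, we have $c_R(s)^2 = |s|^2$ on $\wedge^\bullet\mathbb{C}$ (with $s = p(x)-a$ a real scalar), so the last term $\big(1\otimes\delta\cdot c_R(p(x)-a)\big)^2 = \delta^2|p(x)-a|^2$ comes for free. For the first term, $B^2 = \big(\sqrt{-1}(\tilde D+\tilde\Phi)\otimes c_L(1)\big)^2 = -(\tilde D+\tilde\Phi)^2\otimes c_L(1)^2 = (\tilde D+\tilde\Phi)^2\otimes 1$, using that $c_L(1)^2 = -1$ (since $c_L(t)\omega = t\wedge\omega - \iota_t\omega$ gives $c_L(1)^2 = -\|1\|^2 = -1$ on $\wedge^\bullet\mathbb{C}$), so $B^2 = (\tilde D+\tilde\Phi)^2\otimes 1$. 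This identifies the first summand on the right-hand side of \eqref{E:sqfor}.

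The content of the lemma is in the cross term. Expanding, the cross term is $-\delta\big(B\cdot(1\otimes c_R(p(x)-a)) + (1\otimes c_R(p(x)-a))\cdot B\big)$, i.e. $-\delta$ times the anticommutator of $B$ with $1\otimes c_R(p(x)-a)$. I would split this into two contributions. First, the two Clifford actions $c_L$ and $c_R$ anticommute by construction (this is the point of introducing left and right actions in \eqref{E:cliffordaction}), so when $B$ acts through its $c_L(1)$ factor it anticommutes with the $c_R$ factor; this means the "$\tilde D+\tilde\Phi$ differentiates the coefficients of $\tilde F$-sections" part combines with the $c_L c_R + c_R c_L = 0$ relation so that the only surviving piece is the one where $\tilde D$ (more precisely its first-order part) hits the function $p(x)-a$. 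That produces a zeroth-order operator: up to the tensor factors it is $\sqrt{-1}\,\sigma(\tilde D)(dp)$ composed with the appropriate Clifford generators. On $W$ we have $p\equiv 0$, so $dp = 0$ there and $R$ vanishes on $W$; on $M\times(1,\infty)$ we have $p(y,t) = t$, so $dp = dt$, and \eqref{E:tildeD+Phi} gives that the part of $\tilde D$ involving $\partial_t$ is $\gamma\partial_t$ with leading symbol $\sqrt{-1}\gamma$ paired with $dt$. Tracking the constants and the $c_L(1)$, $c_R(1)$ actions on $\wedge^\bullet\mathbb{C}$ — where $c_L(1)c_R(1)$ acts as $+1$ on $\wedge^0$ and $-1$ on $\wedge^1$, i.e. as $\Pi_0 - \Pi_1$, equivalently $-(\Pi_1-\Pi_0)$ — yields $R|_{M\times(1,\infty)} = \sqrt{-1}\gamma(\Pi_1-\Pi_0)$ as claimed in \eqref{E:bundlemapR}. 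Uniform boundedness of $R$ follows because $dp$ is uniformly bounded ($f'$ is bounded, being $0$ for $t\le 1/2$ and $1$ for $t\ge 1$) and $\sigma(\tilde D)$ is uniformly bounded by Assumption \ref{A:leadingsymbol}.

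The main obstacle, and the step demanding the "more careful analysis" alluded to in the introduction, is the bookkeeping in the transition region $M\times[1/2,1]$, where $p = f(t)$ is neither constant nor equal to $t$: there $R$ is some bounded but not explicitly identified bundle map, and one must be careful that no first-order terms leak through — in particular that the anticommutator of $\tilde D$ with $1\otimes c_R(p(x)-a)$ really does collapse to a bundle map everywhere, not just on the two ends. This is exactly where the anticommutativity of $c_L$ and $c_R$ is essential: it is what guarantees that the genuinely first-order pieces of $B$ pass through the $c_R$ factor with a sign and cancel in the anticommutator, leaving only the derivative of the scalar $p(x)-a$. I would therefore state and prove the anticommutator computation as a pointwise identity on $C_0^\infty(\tilde W,\tilde F)$ first, conclude that $R := -[\,\sqrt{-1}(\tilde D+\tilde\Phi)\otimes c_L(1),\ 1\otimes c_R(p-a)\,]_+$ is a bundle map, and only then specialize to the regions $W$ and $M\times(1,\infty)$ to read off the explicit formulas.
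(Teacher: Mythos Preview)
Your approach is correct and essentially identical to the paper's: expand $(B - 1\otimes\delta\, c_R(p-a))^2$, use $c_L(1)^2=-1$, $c_R(1)^2=+1$, and the anticommutation of $c_L$ and $c_R$ so that the cross term collapses to the commutator of $\tilde D$ with the scalar $p-a$, which on the cylinder is $\gamma f'(t)$.

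Two small sign corrections: from \eqref{E:cliffordaction} one computes $c_L(1)c_R(1)\cdot 1 = c_L(1)(ds)=-1$ and $c_L(1)c_R(1)\cdot ds = c_L(1)(1)=ds$, so $c_L(1)c_R(1)=\Pi_1-\Pi_0$, not $\Pi_0-\Pi_1$; and since $(B-X)^2=B^2-[B,X]_++X^2$, the identification should be $R=+[B,\,1\otimes c_R(p-a)]_+$, not the negative. These two slips cancel, which is why you still land on the right formula for $R$. Your worry about the transition region $M\times[1/2,1]$ is also unfounded: the same pointwise computation gives $R=f'(t)\,\sqrt{-1}\gamma(\Pi_1-\Pi_0)$ uniformly on all of $M\times(0,\infty)$ with no first-order leakage, and the appeal to Assumption~\ref{A:leadingsymbol} for boundedness is unnecessary once you have this explicit formula (bounded $f'$ and unitary $\gamma$ suffice).
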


\begin{proof}
Note first that $p(x)-a\equiv-a$ on $W$. Thus, since $c_R(-a)$ anti-commutes with $B$, we have $\BB
_{a,\delta}^2|_W=B^2|_W+\delta^2a^2=(\tilde D+\tilde\Phi)^2\otimes1|_W+\delta^2a^2$, which is \eqref{E:sqfor}.

Restricting $\BB_{a,\delta}$ to the cylinder $M\times(0,\infty)$, we obtain
\[
\BB_{a,\delta}|_{M\times(0,\infty)}=\sqrt{-1}(D+\Phi)\otimes c_L(1)+\sqrt{-1}\gamma\otimes c_L(1)
\partial_t-1\otimes\delta(f(t)-a)\cdot c_R(1).
\]
Since $c_L$ and $c_R$ anti-commute, we get
\[
\BB_{a,\delta}^2|_{M\times(0,\infty)}=(\tilde D+\tilde\Phi)^2\otimes1-\sqrt{-1}\delta f'\gamma\otimes c_L(1)
c_R(1)+\delta^2|t-a|^2.
\]
Since $c_L(1)c_R(1)=\Pi_1-\Pi_0$, \eqref{E:sqfor} and \eqref{E:bundlemapR} follow with $R=f'\sqrt{-1}\gamma(\Pi_1-\Pi_0)$.
\end{proof}

\subsection{Fredholmness of $\BB_{a,\delta}$}\label{SS:Fredholmness}

\begin{lemma}\label{L:freopr}
There exists a small enough $\delta$, such that $\BB_{a,\delta}$ is a Fredholm operator for every $a\in\RR$.
\end{lemma}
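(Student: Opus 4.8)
The goal is to show that for $\delta$ small, $\BB_{a,\delta}$ is Fredholm on $\tilde W$ for every $a$. The natural strategy, following the general criterion for essentially self-adjoint first-order operators (e.g. \cite[Theorem 2.1]{Anghel93} as used in Lemma~\ref{L:Callias-invertible}), is to show that $\BB_{a,\delta}$ is \emph{invertible at infinity}: there is a compact set $\mathcal{K}\Subset\tilde W$ and a constant $\kappa>0$ with $\|\BB_{a,\delta}s\|\ge\kappa\|s\|$ for all smooth compactly supported $s$ with support disjoint from $\mathcal{K}$. Since $\BB_{a,\delta}$ is already known to be essentially self-adjoint (the leading symbol estimate \eqref{E:leadingsymbol} holds), invertibility at infinity yields Fredholmness.

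\begin{proof}[Plan]
The plan is to establish invertibility at infinity for $\BB_{a,\delta}$ using the square formula \eqref{E:sqfor} of Lemma~\ref{L:sqfor}. For $s\in C_0^\infty(\tilde W,\tilde F)$ one has $\|\BB_{a,\delta}s\|^2=(\BB_{a,\delta}^2s,s)=\|(\tilde D+\tilde\Phi)s\|^2-\delta(Rs,s)+\delta^2\|(p(x)-a)s\|^2$. The strategy is to choose the compact set and then bound the right-hand side from below by a positive multiple of $\|s\|^2$ on its complement. First I would fix a compact set $\mathcal{K}\Subset\tilde W$ large enough that it contains the essential support $K_{\tilde W}$ of $\tilde D+\tilde\Phi$ and also contains $W\cup\big(M\times[0,T]\big)$ for some $T$ with $T>1$ and large enough that $|f(t)-a|$ stays bounded away from — wait, this needs care, since $a$ is arbitrary; instead $\mathcal{K}$ should depend on $a$, which is fine because Fredholmness is asked for \emph{each fixed} $a$. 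Concretely, take $\mathcal{K}=W\cup\big(M\times[0,T_a]\big)$ with $T_a>\max(1,|a|+1)$ large enough to also contain $K_{\tilde W}$. On $\tilde W\setminus\mathcal{K}$, which lies in $M\times(T_a,\infty)$, two good things happen simultaneously: the Callias condition (Definition~\ref{D:calliasopr}(ii)) gives $\Phi^2+[\tilde D,\tilde\Phi]_+\ge c$, so by the computation \eqref{E:calliasind} together with the fact that $\gamma\partial_t$ anticommutes appropriately one gets $\|(\tilde D+\tilde\Phi)s\|^2\ge c\|s\|^2$ there (this is exactly Remark~\ref{R:invinf} applied on the cylinder, using that $f(t)=t$ for $t\ge1$); and $|f(t)-a|=|t-a|\ge 1$ so $\delta^2\|(p(x)-a)s\|^2\ge\delta^2\|s\|^2\ge0$. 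Meanwhile $R$ is uniformly bounded by Lemma~\ref{L:sqfor}, say $\|R\|_\infty\le C_R$ with $C_R$ independent of $a$ and $\delta$. Hence $\|\BB_{a,\delta}s\|^2\ge(c-\delta C_R)\|s\|^2$ for $s$ supported in $\tilde W\setminus\mathcal{K}$.

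Therefore, choosing $\delta>0$ small enough that $\delta C_R<c/2$ — note this choice of $\delta$ is uniform in $a$ — we obtain $\|\BB_{a,\delta}s\|\ge\sqrt{c/2}\,\|s\|$ for every $a\in\RR$ and every compactly supported smooth $s$ with support outside the ($a$-dependent) compact set $\mathcal{K}$. By density this extends to all $s$ in the domain of the closure with support outside $\mathcal{K}$. Invoking \cite[Theorem 2.1]{Anghel93} (an operator that is essentially self-adjoint and invertible at infinity is Fredholm), which applies since we already noted $\BB_{a,\delta}$ satisfies \eqref{E:leadingsymbol} and hence is essentially self-adjoint, we conclude that $\BB_{a,\delta}$ is Fredholm for every $a\in\RR$.

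The main obstacle — or rather the point requiring the most care — is the lower bound $\|(\tilde D+\tilde\Phi)s\|^2\ge c\|s\|^2$ on the far end of the cylinder: one must verify that on $M\times(T_a,\infty)$ the operator $\tilde D+\tilde\Phi$ genuinely has the form making \eqref{E:calliasind} give $(\tilde D+\tilde\Phi)^2\ge\Phi^2+[\tilde D,\tilde\Phi]_+\ge c$, i.e.\ that the extension of $\tilde D+\tilde\Phi$ to the attached cylinder (built from $D+\Phi$ on $M$ and the term $\gamma\partial_t$ as in \eqref{E:tildeD+Phi}) is still Callias-type with the same constant $c$ outside a compact set, and that the cross term $[\gamma\partial_t,\text{(stuff)}]$ contributes a nonnegative (or at worst controlled) amount — this is where the anticommutation $\gamma\Phi+\Phi\gamma$ and the product structure of the metric near the boundary are used. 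Once that uniform positivity of $(\tilde D+\tilde\Phi)^2$ at infinity is in hand, the rest is the elementary perturbation argument above, absorbing the bounded term $\delta R$ by taking $\delta$ small and discarding the manifestly nonnegative term $\delta^2|p(x)-a|^2$.
\end{proof}
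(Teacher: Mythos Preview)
Your overall framework --- invoke \cite[Theorem 2.1]{Anghel93} after establishing a pointwise lower bound for $\BB_{a,\delta}^2$ at infinity via the square formula --- matches the paper. However, there is a genuine gap in the choice of the compact set and, relatedly, in how you use the Callias condition on the cylinder.

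The manifold $W$ is a \emph{non-compact} cobordism (this is the whole point of the paper), so your set $\mathcal{K}=W\cup\big(M\times[0,T_a]\big)$ is not compact. More fundamentally, the extension $\tilde D+\tilde\Phi$ is \emph{not} a Callias-type operator on $\tilde W$: on the cylinder one has $\tilde\Phi^2+[\tilde D,\tilde\Phi]_+=\Phi^2+[D,\Phi]_+$ pulled back from $M$, which is $\ge c$ only on $(M\setminus K)\times[0,\infty)$; on the non-compact strip $K\times[0,\infty)$ it may have no positive lower bound. So there is no compact ``essential support $K_{\tilde W}$'' of $\tilde D+\tilde\Phi$ on $\tilde W$, and your claim that $\Phi^2+[\tilde D,\tilde\Phi]_+\ge c$ on all of $M\times(T_a,\infty)$ is false.

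This is exactly why you cannot simply discard the term $\delta^2|p(x)-a|^2$. The paper's proof uses it in an essential way: the compact set is taken to be $\tilde K_r=K_W\cup\big(K\times[0,r]\big)$ (with $K_W\Subset W$ the essential support of $\tilde D+\tilde\Phi$ on $W$ and $K\Subset M$ that of $D+\Phi$), and positivity on the remaining non-compact region $K\times[r,\infty)$ is obtained from the quadratic growth of $\delta^2(t-a)^2$, choosing $r=r(a,\delta)$ large enough to overcome the (merely bounded-below) contribution of $\tilde\Phi^2+[\tilde D,\tilde\Phi]_+$ there. The smallness of $\delta$ is then used, as you do, only to absorb the bounded term $\delta R$ against the constant $d_1=\min\{c,c_W\}$. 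Once you replace your $\mathcal{K}$ by a set of this shape and keep the quadratic term in play, your argument goes through and coincides with the paper's.
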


\begin{proof}
By \cite[Theorem 2.1]{Anghel93}, it is enough to show that the operator $\BB_{a,\delta}$ is invertible at infinity (cf. \eqref{E:invinf}). Since $\BB_{a,\delta}$ is self-adjoint, \eqref{E:invinf} is equivalent to the fact that there exists a constant $\tilde c>0$ and a compact $\tilde K\Subset\tilde W$ such that
\begin{equation}\label{E:invinf2}
(\BB_{a,\delta}^2s,s)\ge\tilde c\,\|s\|^2,\qquad\text{for all }s\in L^2(\tilde W,\tilde F),\;{\rm supp}(s)\cap\tilde K=\emptyset,
\end{equation}
where $(\cdot,\cdot)$ denotes the inner product on $L^2(\tilde W,\tilde F)$. Note that if we denote the bundle map
\[
Q\ :=\ (\tilde\Phi^2+[\tilde D,\tilde\Phi]_+)\otimes1-\delta\cdot R+\delta^2|p(x)-a|^2,
\]
then \eqref{E:sqfor} can also be written as
\[
\BB_{a,\delta}^2\ =\ \tilde D^2\otimes1+Q.
\]
Since $\tilde D^2$ is a non-negative operator on $\tilde W$, \eqref{E:invinf2} can be reduced to
\begin{equation}\label{E:boundofQ}
	|Q(x)|\ge\tilde c,\qquad\text{for all }x\in \tilde W\setminus\tilde K.
\end{equation}

Since both $D+\Phi$ and $\tilde D+\tilde\Phi$ are Callias-type operators, there exist compact subsets $K\Subset M$, $K_W\Subset W$ and positive constants $c,c_W>0$, such that
\begin{equation}\label{E:boundonM}
	|(\Phi^2+[D,\Phi]_+)(y)|\ge c,\qquad\text{for all }y\in M\setminus K,
\end{equation}
and
\begin{equation}\label{E:boundonW}
	|(\tilde\Phi^2+[\tilde D,\tilde\Phi]_+)(x)|\ge c_W,\qquad\text{for all }x\in W\setminus K_W.
\end{equation}

Now consider $|(\tilde\Phi^2+[\tilde D,\tilde\Phi]_+)(y,t)|$ for $(y,t)\in M\times[0,\infty)$. Note that $\tilde\Phi$ is independent of $t$, and anti-commutes with $\gamma\partial_t$. So
\[
[\tilde D,\tilde\Phi]_+=(D+\gamma\partial_t)\tilde\Phi+\tilde\Phi(D+\gamma\partial_t)=D\tilde\Phi+\tilde\Phi D=[D,\tilde\Phi]_+.
\]
Thus 
\[
	|(\tilde\Phi^2+[\tilde D,\tilde\Phi]_+)(y,t)|\ =\ 
	|(\Phi^2+[D,\Phi]_+)(y)|,
\] 
which does not depend on $t$. From \eqref{E:boundonM}, we get
\begin{equation}\label{E:boundoncylinder}
	|(\tilde\Phi^2+[\tilde D,\tilde\Phi]_+)(y,t)|\ge c,\qquad\text{for all }(y,t)\in(M\times[0,\infty))\setminus(K\times[0,\infty)).
\end{equation}
Furthermore, since $K$ is compact, $\tilde\Phi^2+[\tilde D,\tilde\Phi]_+$ is bounded from below on $M\times[0,\infty)$.

Set $W_r:=W\cup(M\times[0,r])$, $\tilde K_r:=K_W\cup(K\times[0,r])$, $r>0$ and $d_1:=\min\{c,c_W\}$. By \eqref{E:boundonW} and \eqref{E:boundoncylinder},
\[
|(\tilde\Phi^2+[\tilde D,\tilde\Phi]_+)(x)|\ge d_1,\qquad\text{for all }x\in W_r\setminus\tilde K_r.
\]
Since $R$ is uniformly bounded on $\tilde W$, we can choose $\delta$ small enough such that 
\[
	\delta\cdot\sup_{x\in\tilde W}|R(x)|\ \le\ d_1/2.
\] 
So
\begin{equation}\label{E:boundofQonWr}
|Q(x)|\ge\frac{d_1}{2},\qquad\text{for all }x\in W_r\setminus\tilde K_r.
\end{equation}
Since $\tilde\Phi^2+[\tilde D,\tilde\Phi]_+$ has a uniform lower bounded on $M\times[0,\infty)$, and $|p(y,t)-a|^2$ grows quadratically as $t\to\infty$, there exist $r=r(a,\delta)$ and $d_2>0$, such that
\begin{equation}\label{E:boundofQoncylinder}
|Q(y,t)|\ge d_2,\qquad\text{for all }(y,t)\in M\times[r,\infty).
\end{equation}

Set $\tilde K:=\tilde K_{r(a,\delta)}$ and $\tilde c:=\min\{d_1/2,d_2\}$. Combining \eqref{E:boundofQonWr} and \eqref{E:boundofQoncylinder} yields \eqref{E:boundofQ}. Therefore the lemma is proved.
\end{proof}

\subsection{Index of $\BB_{a,\delta}$}\label{SS:index} 

From now on we fix  $\delta$ which satisfies Lemma \ref{L:freopr}. Define a grading on the vector bundle $\tilde F=F\otimes\wedge^\bullet\mathbb{C}$ by
\begin{equation}\label{E:tildeF grading}
	\tilde F^+:=F\otimes\wedge^0\mathbb{C}, \qquad 
	\tilde F^-:=F\otimes\wedge^1\mathbb{C},
\end{equation}
\and denote by $\BB_{a,\delta}^\pm:=\BB_{a,\delta}|_{L^2(\tilde W,\tilde F^\pm)}$ the restrictions. We consider the index
\[
\ind\BB_{a,\delta}\ :=\ \dim\ker\BB_{a,\delta}^+-\dim\ker\BB_{a,\delta}^-.
\]

\begin{lemma}\label{L:stbind}
$\ind\BB_{a,\delta}$ is independent of $a$.
\end{lemma}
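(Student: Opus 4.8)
The plan is to show that $\ind\BB_{a,\delta}$ is locally constant in $a$, which, combined with the fact that $\RR$ is connected, yields the claim. Since $\BB_{a,\delta}$ depends on $a$ only through the zeroth-order term $-\delta\cdot c_R(p(x)-a)$, the map $a\mapsto\BB_{a,\delta}$ is a continuous (indeed smooth, even affine) family of self-adjoint Fredholm operators with fixed principal symbol $\sigma(\tilde D)$ and fixed domain. The standard fact is that the index of a continuous family of Fredholm operators is locally constant; the only subtlety here is that we are dealing with \emph{unbounded} operators, so ``continuity'' must be interpreted in the right topology (e.g. in the gap/Riesz topology, or via the resolvents, or by the bounded transform $\BB\mapsto\BB(1+\BB^2)^{-1/2}$).

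The key steps, in order, are as follows. First I would fix $a_0\in\RR$ and write $\BB_{a,\delta}=\BB_{a_0,\delta}+V_{a}$, where $V_a:=-\delta\cdot\bigl(c_R(p(x)-a)-c_R(p(x)-a_0)\bigr)=\delta(a-a_0)\otimes c_R(1)$ is a \emph{uniformly bounded} bundle map (since $c_R(1)$ is a fixed bundle endomorphism) with norm $O(|a-a_0|)$. Second, I would invoke the general principle that adding a uniformly bounded self-adjoint operator to an essentially self-adjoint Fredholm operator with a spectral gap preserves Fredholmness as long as the perturbation is smaller than the gap, and that the index is unchanged. Concretely: by Lemma~\ref{L:freopr}, $\BB_{a_0,\delta}$ satisfies the invertibility-at-infinity estimate \eqref{E:invinf2} with some constant $\tilde c_0>0$ outside a compact $\tilde K$; for $|a-a_0|$ small enough that $\|V_a\|\le\sqrt{\tilde c_0}/2$, the operator $\BB_{a,\delta}$ still satisfies \eqref{E:invinf} outside $\tilde K$, hence remains Fredholm, and $\{\BB_{a,\delta}\}_{|a-a_0|\le\eta}$ is a norm-continuous family of bounded-below-at-infinity self-adjoint operators. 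Third, I would conclude local constancy of the index either by the homotopy invariance of the index for such families (the bounded transform gives a norm-continuous path in the Fredholm operators, and $\ind$ is locally constant there), or more elementarily by noting that the index equals the index of $\BB_{a,\delta}$ restricted to a large compact piece with suitable boundary conditions, which is manifestly stable under small bounded perturbations. Finally, a covering argument over the compact interval $[a_1,a_2]$ for arbitrary $a_1<a_2$ promotes local constancy to global constancy.

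The main obstacle I expect is the honest justification that the index of a \emph{norm-continuous family of unbounded essentially self-adjoint Fredholm operators with constant principal symbol} is locally constant. For the bounded transform $\BB\mapsto\BB(1+\BB^2)^{-1/2}$ one must check this map is continuous from the relevant family topology into the bounded operators with the norm topology; this is true but requires the uniformity of the spectral estimates in $a$ (which we have, since $V_a$ is uniformly bounded and the geometry is fixed). An alternative and perhaps cleaner route, fitting the paper's self-contained style, is to avoid abstract functional analysis and instead argue directly: for $|a-a_0|$ small the operators $\BB_{a,\delta}^2$ and $\BB_{a_0,\delta}^2$ differ by a uniformly bounded bundle map supported where $p\ne\text{const}$, so their kernels are ``close,'' and one deduces $\dim\ker\BB_{a,\delta}^\pm=\dim\ker\BB_{a_0,\delta}^\pm$ by the usual argument that the dimensions of kernels of a continuous family of Fredholm operators are upper semicontinuous while the index is continuous. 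I would present the homotopy-invariance argument as the primary proof, citing the standard reference for index of continuous families, with the uniformity of estimates from Lemma~\ref{L:freopr} doing the essential work.
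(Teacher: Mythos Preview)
Your primary argument is correct and is exactly the paper's approach: since $\BB_{b,\delta}-\BB_{a,\delta}=1\otimes\delta\cdot c_R(b-a)$ is a bounded operator depending norm-continuously on $a$, the stability of the Fredholm index applies directly (the paper dispatches this in a single sentence without elaborating the unbounded-operator technicalities you discuss). One caution: your alternative suggestion that $\dim\ker\BB_{a,\delta}^\pm=\dim\ker\BB_{a_0,\delta}^\pm$ for $|a-a_0|$ small is not correct in general---only the \emph{index}, not the individual kernel dimensions, is stable under small bounded perturbations.
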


\begin{proof}
Since for every $a,b\in\RR$ the operator $\BB_{b,\delta}-\BB_{a,\delta}=1\otimes\delta\cdot c_R(b-a)$ is bounded and depends continuously on $b-a\in\RR$, the lemma follows from the stability of the index of a Fredholm operator.
\end{proof}

\begin{lemma}\label{L:ind=0}
${\rm ind}\,\BB_{a,\delta}=0$ for all $a\in\RR$.
\end{lemma}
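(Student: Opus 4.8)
The plan is to combine Lemma~\ref{L:stbind} with a direct positivity estimate for $\BB_{a,\delta}^2$ when $a$ is very negative. Since Lemma~\ref{L:stbind} tells us $\ind\BB_{a,\delta}$ does not depend on $a$, it suffices to show that $\ind\BB_{a,\delta}=0$ for one convenient value of $a$, and the natural choice is $a\ll0$. For such $a$, the point is that $p(x)-a\geq -a > 0$ everywhere on $\tilde W$ (since $p\geq0$), so the term $\delta^2|p(x)-a|^2$ in the formula~\eqref{E:sqfor} of Lemma~\ref{L:sqfor} is bounded below by $\delta^2 a^2$, which can be made arbitrarily large.

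The key steps, in order: First, write $\BB_{a,\delta}^2 = (\tilde D+\tilde\Phi)^2\otimes 1 - \delta\cdot R + \delta^2|p(x)-a|^2$ using Lemma~\ref{L:sqfor}, and recall that $(\tilde D+\tilde\Phi)^2\geq 0$ as an operator (it is a square of a self-adjoint operator) and that $R$ is a uniformly bounded bundle map, say $|R(x)|\leq C_R$ for all $x\in\tilde W$. Second, estimate for any $s\in C_0^\infty(\tilde W,\tilde F)$:
\[
	(\BB_{a,\delta}^2 s,s)\ \geq\ \big(-\delta R s,s\big)\ +\ \big(\delta^2|p(x)-a|^2 s,s\big)\ \geq\ \big(\delta^2 a^2 - \delta C_R\big)\|s\|^2,
\]
where we used $|p(x)-a|^2\geq a^2$ (valid because $p(x)\geq0$ and $a\leq 0$ makes $p(x)-a\geq -a=|a|\geq0$) and $(\delta R s,s)\leq \delta C_R\|s\|^2$. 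Third, choose $a$ negative enough that $\delta^2 a^2 > \delta C_R$, i.e. $a^2 > C_R/\delta$; then the right-hand side is a strictly positive multiple of $\|s\|^2$, so by essential self-adjointness this extends to all $s$ in the domain and hence $\ker\BB_{a,\delta}=0$. In particular $\ker\BB_{a,\delta}^+=\ker\BB_{a,\delta}^-=0$, so $\ind\BB_{a,\delta}=0$ for that $a$. Fourth, invoke Lemma~\ref{L:stbind} to conclude $\ind\BB_{a,\delta}=0$ for all $a\in\RR$.

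I do not anticipate a serious obstacle here; this is essentially the bookkeeping promised in the outline (Lemma~\ref{L:ind=0} is cited there as an ``easy computation''). The only point requiring a little care is the passage from the estimate on compactly supported sections to the full $L^2$-domain of the closure — but this is standard given that $\BB_{a,\delta}$ is essentially self-adjoint (already established via \cite[Theorem 1.17]{GL}), since $C_0^\infty$ is a core and both sides of the inequality $(\BB_{a,\delta}^2 s,s)\geq (\delta^2 a^2-\delta C_R)\|s\|^2$ are continuous in the graph norm. A secondary subtlety is simply remembering that $R$ vanishes on $W$ and equals $\sqrt{-1}\gamma(\Pi_1-\Pi_0)$ (times $f'$) on the cylinder, so its uniform bound $C_R$ genuinely exists; this was already recorded in Lemma~\ref{L:sqfor}.
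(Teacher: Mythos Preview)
Your proof is correct and follows essentially the same approach as the paper: reduce to a single value of $a$ via Lemma~\ref{L:stbind}, then take $a$ negative with $a^2>\sup_{x\in\tilde W}|R(x)|/\delta$ so that \eqref{E:sqfor} gives $\BB_{a,\delta}^2>0$ and hence $\ker\BB_{a,\delta}=0$. The paper states this in two sentences; your version simply unpacks the quadratic-form estimate and the core argument more explicitly.
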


\begin{proof}
By Lemma \ref{L:stbind}, it suffices to prove this result for a particular value of $a$. If $a$ is a negative number such that $a^2>\sup_{x\in\tilde W}|R(x)|/\delta$, then $\BB_{a,\delta}^2>0$ by \eqref{E:sqfor}, so that $\ker\BB_{a,\delta}=0=\ind\BB_{a,\delta}$.
\end{proof}

\section{The Model Operator}\label{S:model}\label{S:modelopr}

When $a$ is large, all the sections $s\in \ker\BB_{a,\delta}$ are concentrated on the cylinder $M\times[0,\infty)$ near $M\times\{a\}$. Thus index of $\BB_{a,\delta}$ is related to the index of a certain operator on $M\times\RR$, whose restriction to a neighborhood of $M\times\{a\}$ in $\tilde{W}$ is an approximation of the restriction of $\BB_{a,\delta}$ to the neighborhood of $M\times\{a\}$ in $M\times\RR$. We call this operator the {\em model operator} for $\BB_{a,\delta}$ and denote it by $\BB_\delta^{\mo}$. In this section we construct the model operator and show that $\ind{\BB_{\delta}^{\mo}}=\ind(D+\Phi)$. In the next section we show that its index is equal to the index of  $\BB_{a,\delta}$.

\subsection{The operator $\BB_{\delta}^{\mo}$}

Consider the lift of the bundle $E=E^+\oplus E^-$ to the cylinder $M\times\RR$, which will
still be denoted by $E=E^+\oplus E^-$.

Consider the vector bundle $\tilde F^{\mo}=(E^+\oplus E^-)\otimes
\wedge^\bullet\mathbb{C}$  over $M\times\RR$ and the operator
\[
	\BB_\delta^{\mo}:\,
	L^2(M\times\RR,\tilde F^{\mo})\ \to\ L^2(M\times\RR,\tilde F^{\mo})
\]
defined by
\begin{equation}\label{E:Bmod}
	\BB_\delta^{\mo}\ :=\
	\sqrt{-1}(D+\Phi)\otimes c_L(1)+
	\sqrt{-1}\gamma\otimes c_L(1)\partial_t-1\otimes \delta\cdot c_R(t),
\end{equation}
where $t$ is the coordinate along the axis of the cylinder, $\gamma|_{E^\pm}=\pm\sqrt{-1}$, and $\delta$ is fixed with the same value as in Subsection \ref{SS:index}. The operator $\BB_\delta^{\mo}$ satisfies Assumption \ref{A:leadingsymbol} as well and, hence, is self-adjoint. Like in Lemma \ref{L:sqfor}, we have
\[
	(\BB_\delta^{\mo})^2\ =\ (D+\gamma\partial_t+\Phi)^2\otimes1-\sqrt{-1}\delta\gamma(\Pi_1-\Pi_0)+\delta^2t^2.
\]
Then by the same argument as in the proof of Lemma \ref{L:freopr}, $\BB_\delta^{\mo}$ is a Fredholm operator.

Clearly, the restrictions of $\tilde F^{\mo}$ and  $\tilde F$  to the cylinder $M\times(1,\infty)$ are isomorphic. We give $\tilde F^{\mo}$ grading similar to \eqref{E:tildeF grading}, 
\[
	\tilde F_+^{\mo}\ :=\ E\otimes\wedge^0\mathbb{C},\qquad
	\tilde F_-^{\mo}\ :=\ E\otimes\wedge^1\mathbb{C}.
\] 
Set
\[
\ind\BB_\delta^{\mo}\ :=\ \dim\ker(\BB_\delta^{\mo})_+-\dim\ker(\BB_\delta^{\mo})_-,
\]
where $(\BB_\delta^{\mo})_\pm:=\BB_\delta^{\mo}|_{L^2(\tilde W,\tilde F_\pm^{\mo})}$.

\begin{lemma}\label{L:keriso}
The space $\ker\BB_\delta^{\mo}$ is isomorphic (as a graded space) to $\ker(D+\Phi)$. In particular,
\begin{equation}\label{E:keriso}
	\ind\BB_\delta^{\mo}\ =\ \ind(D+\Phi).
\end{equation}
\end{lemma}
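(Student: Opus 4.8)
The plan is to analyze the kernel of $\BB_\delta^{\mo}$ by separating variables along the cylinder axis $t\in\RR$. Writing a section of $\tilde F^{\mo}=E\otimes\wedge^\bullet\mathbb{C}$ as $s=s_0\otimes 1+s_1\otimes dt$ with $s_0,s_1\in L^2(M\times\RR,E)$, the Clifford actions $c_L(1)$ and $c_R(t)$ act explicitly on the two-dimensional factor $\wedge^\bullet\mathbb{C}$, so the equation $\BB_\delta^{\mo}s=0$ becomes a coupled first-order system in $t$ for the pair $(s_0,s_1)$, with coefficients built from $D+\Phi$, $\gamma\partial_t$, and multiplication by $\delta t$. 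The key point is that $D+\Phi$ acts only in the $M$-directions and commutes with everything happening in $t$ and in $\wedge^\bullet\mathbb{C}$, so I would decompose $L^2(M,E)$ using the spectral theory of the self-adjoint Fredholm operator $D+\Phi$: on $\ker(D+\Phi)$ and on its orthogonal complement the system decouples into scalar-type ODEs in $t$.

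First I would record the square formula $(\BB_\delta^{\mo})^2=(D+\gamma\partial_t+\Phi)^2\otimes 1-\sqrt{-1}\delta\gamma(\Pi_1-\Pi_0)+\delta^2t^2$ already stated in the text, and expand $(D+\gamma\partial_t+\Phi)^2=(D+\Phi)^2-\partial_t^2$ using that $\gamma$ anti-commutes with $D+\Phi$ and $\gamma^2=-1$. Thus on a section $s$ in the kernel we get, after pairing with $s$, that $\|(D+\Phi)s\|^2+\|\partial_t s\|^2+\delta^2\|t\,s\|^2=\delta\,(\sqrt{-1}\gamma(\Pi_1-\Pi_0)s,s)\le\delta\|s\|^2$. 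On the orthogonal complement of $\ker(D+\Phi)$ the operator $(D+\Phi)^2$ is bounded below by some $\mu>0$ (Fredholmness plus self-adjointness, cf.\ the invertibility-at-infinity estimate), so choosing $\delta$ small — which is already assumed, via Lemma~\ref{L:freopr} — forces the component of $s$ in that complement to vanish. Hence any $s\in\ker\BB_\delta^{\mo}$ has its $M$-part lying entirely in $\ker(D+\Phi)$, i.e.\ $s=\sum_j u_j(t)\otimes e_j$ where $e_j$ runs over a graded orthonormal basis of $\ker(D+\Phi)$ and $u_j\in L^2(\RR,\wedge^\bullet\mathbb{C})$.

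On each such summand the equation $\BB_\delta^{\mo}s=0$ reduces to the harmonic-oscillator-type operator $\sqrt{-1}\gamma\otimes c_L(1)\partial_t-1\otimes\delta c_R(t)$ acting on $\wedge^\bullet\mathbb{C}$-valued functions of $t$, with $\gamma$ now a scalar $\pm\sqrt{-1}$ depending on whether $e_j\in E^+$ or $E^-$. This is (a conjugate of) the creation/annihilation operator of the one-dimensional harmonic oscillator, which has a one-dimensional $L^2$-kernel spanned by a Gaussian $e^{-\delta t^2/2}$ landing in a definite $\ZZ_2$-degree of $\wedge^\bullet\mathbb{C}$ — the degree being selected by the sign of $\gamma$, hence by the grading of $e_j$. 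I would carry out this computation once to see exactly which Gaussian survives and in which degree; this fixes an isomorphism between the $e_j$-summand of $\ker\BB_\delta^{\mo}$ and the line $\mathbb{C}e_j$, respecting the total $\ZZ_2$-grading (the grading on $\tilde F^{\mo}$ being the tensor of the $E$-grading with the $\wedge^\bullet\mathbb{C}$-grading). Summing over $j$ gives the graded isomorphism $\ker\BB_\delta^{\mo}\cong\ker(D+\Phi)$, and \eqref{E:keriso} follows by taking graded dimensions.

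The main obstacle is the sign bookkeeping in the last step: one must check that the Gaussian solution associated to $e_j\in E^\pm$ sits in the correct factor $\wedge^0\mathbb{C}$ or $\wedge^1\mathbb{C}$ so that the induced grading on $\ker\BB_\delta^{\mo}$ matches the grading on $\ker(D+\Phi)$ rather than its opposite. This hinges on the precise conventions for $c_L$, $c_R$, the value $\gamma|_{E^\pm}=\pm\sqrt{-1}$, and the factor $\sqrt{-1}$ in front of $(D+\Phi)\otimes c_L(1)$; getting any of these wrong would flip the sign of the index. A secondary technical point is justifying the separation of variables rigorously in $L^2$ (that the kernel consists of finite sums of elementary tensors, and that the Gaussian factor is genuinely $L^2$ and the only $L^2$-solution of the oscillator equation), but this is routine once the spectral decomposition of $D+\Phi$ on the $M$-factor has been used to reduce to finitely many $t$-ODEs.
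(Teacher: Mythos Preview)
Your overall strategy---separate variables, reduce to the harmonic oscillator in $t$, and match the Gaussian ground state to the $E$-grading---is the same as the paper's. Two points, however, need correction.

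First, the appeal to ``$\delta$ small, already assumed via Lemma~\ref{L:freopr}'' is not justified: that lemma bounds $\delta$ against the Callias constants $c,c_W$, not against the spectral gap $\mu$ of $(D+\Phi)^2$ on $(\ker(D+\Phi))^\perp$, and there is no reason these are comparable. The paper avoids this issue entirely. Restricting to $E^\pm\otimes\wedge^\bullet\mathbb{C}$ it writes
\[
(\BB_\delta^{\mo})^2\ =\ (D+\Phi)^2\otimes 1\ +\ 1\otimes\big(-\partial_t^2\pm\delta(\Pi_1-\Pi_0)+\delta^2t^2\big),
\]
and observes that \emph{both} summands are nonnegative (the second is a shifted harmonic oscillator with ground energy $0$), so the kernel of the sum is the tensor product of the two kernels for \emph{any} $\delta>0$. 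If you prefer to salvage your pairing inequality, note that the oscillator lower bound $\|\partial_t s\|^2+\delta^2\|ts\|^2\ge\delta\|s\|^2$ already forces $(D+\Phi)s=0$ directly, again with no smallness hypothesis.

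Second---and this is precisely the sign issue you flagged---the grading on $\tilde F^{\mo}$ is \emph{not} the tensor of the $E$-grading with the $\wedge^\bullet\mathbb{C}$-grading. The paper sets $\tilde F_+^{\mo}=E\otimes\wedge^0\mathbb{C}$ and $\tilde F_-^{\mo}=E\otimes\wedge^1\mathbb{C}$ just before the lemma, so the grading comes from $\wedge^\bullet\mathbb{C}$ alone. With the tensor grading $\BB_\delta^{\mo}$ would not even be odd, since $(D+\Phi)\otimes c_L(1)$ is a tensor of two odd maps and hence even. Once you use the correct grading, the bookkeeping is that the Gaussian attached to $\sigma\in\ker(D+\Phi)\cap L^2(M,E^+)$ lies in $\wedge^0\mathbb{C}$ (hence in $\tilde F_+^{\mo}$) and the one for $E^-$ lies in $\wedge^1\mathbb{C}$ (hence in $\tilde F_-^{\mo}$), which is exactly what the paper verifies.
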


\begin{proof}
The space $L^2(M\times\RR,E^\pm\otimes\wedge^\bullet\mathbb{C})$ decomposes into a tensor product 
\[
	L^2(M\times\RR,E^\pm\otimes\wedge^\bullet\mathbb{C})\ = \ 
	L^2(M,E^\pm)\,\otimes\,L^2(\RR,\wedge^\bullet\mathbb{C}).
\]
From \eqref{E:Bmod} it follows that with respect to this decomposition we have
\[
(\BB_\delta^{\mo})^2|_{L^2(M\times\RR,E^\pm\otimes\wedge^\bullet\mathbb{C})}=(D+\Phi)^2\otimes1+
1\otimes(-\partial_{tt}\pm\delta(\Pi_1-\Pi_0)+\delta^2t^2).
\]
Notice that both summands on the right hand side are non-negative.

The space ${\rm ker}\,(-\partial_{tt}+\Pi_1-\Pi_0+t^2)\subset L^2(\RR,\wedge^\bullet\mathbb{C})$ is one-dimensional and is spanned by
\[
	\alpha^+(t)\ =\ e^{-\delta t^2/2}\ \in\ L^2(\RR,\wedge^0\mathbb{C}).
\]
Similarly, the space  ${\rm ker}\,(-\partial_{tt}+\Pi_0-\Pi_1+t^2)$ is one-dimensional and is spanned by \[
	\alpha^-(t)\ =\ e^{-\delta t^2/2}ds\ \in\ L^2(\RR,\wedge^1\mathbb{C}).
\]
It follows that
\[
\ker\,(\BB_\delta^{\mo})^2|_{L^2(M\times\RR,E^\pm\otimes\wedge^\bullet\mathbb{C})}
\simeq\{\sigma\otimes\alpha^\pm(t):\,\sigma\in\ker\,(D+\Phi)^2|_{L^2(M,E^\pm)}\}.
\]
\end{proof}

\subsection{The operator $\BB_{a,\delta}^{\mo}$}

Let
\[
	T_a:\,M\times\RR\to M\times\RR, \qquad T_a(x,t)=(x,t+a)
\]
be the translation and consider the
pull-back map
\[
	T_a^*:\, L^2(M\times\RR,\tilde F^{\mo})\ \to\ L^2(M\times\RR,\tilde F^{\mo}).
\]
Set
\[
\BB_{a,\delta}^{\mo}:=T_{-a}^*\circ\BB_\delta^{\mo}\circ T_a^*=\sqrt{-1}(D+\Phi)\otimes c_L(1)+\sqrt{-1}\gamma\otimes c_L(1)\partial_t-1\otimes\delta\cdot c_R(t-a).
\]
Then

\begin{equation}\label{E:dimkerBmoda=Bmod}
\dim\ker(\BB_{a,\delta}^{\mo})_\pm\ =\ \dim\ker(\BB_\delta^{\mo})_\pm
\end{equation}
for all $a\in\RR$.

\section{Proof of Theorem \ref{T:cobinv2}}\label{S:pf-thm}

In this section, we finish the proof of the cobordism invariance of the Callias-type index by showing that $\ind\BB_{a,\delta}=
\ind\BB_\delta^{\mo}$. Since $\delta$ is fixed throughout the section, we omit it  from the notation, and write $\BB_a,\BB^{\mo}$ and $\BB_a^{\mo}$ for $\BB_{a,\delta},
\BB_\delta^{\mo}$ and $\BB_{a,\delta}^{\mo}$, respectively.

\subsection{The spectral counting function}\label{SS:spectral counting function}

For a self-adjoint operator $P$ and a real number  $\lambda$, we denote by $N(\lambda,P)$ the number of eigenvalues of $P$ not exceeding $\lambda$ (counting with multiplicities). If the intersection of the continuum spectrum of $P$ with the set $(-\infty,\lambda]$ is not empty, then  we set $N(\lambda,P)=\infty$.

Let $\BB_a^\pm$ denote the restrictions of $\BB_a$ to the spaces $L^2(\tilde W,\tilde F^\pm)$ and let $\BB_\pm^{\mo}$, $\BB_{a,\pm}^{\mo}$ denote the restrictions of $\BB^{\mo}$, $\BB_a^{\mo}$ to the spaces $L^2(M\times\RR,\tilde F^{\mo}_\pm)$.

Since the operator $\BB^{\mo}$ is self-adjoint, by von Neumann's theorem (cf. \cite[Theorem \upper{\romannumeral10}.25]{ReSi2}), the operators $(\BB^{\mo})^2_\pm=\BB_\mp^{\mo}\BB_\pm^{\mo}=(\BB_\pm^{\mo})^*\BB_\pm^{\mo}$ are also self-adjoint. Since the operators  $(\BB^{\mo})_\pm^2$ are Fredholm,  they have smallest non-zero elements of the spectra, denoted by $\lambda_\pm$.

\begin{lemma}\label{L:lambda+=lambda-}
{$\lambda_+=\lambda_-$.}
\end{lemma}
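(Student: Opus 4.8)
The plan is to exhibit an explicit isometric isomorphism between the positive and negative eigenspaces of $\BB^{\mo}$ that intertwines the two operators $(\BB^{\mo})^2_+$ and $(\BB^{\mo})^2_-$, so that their spectra — and in particular their smallest nonzero elements $\lambda_\pm$ — coincide. The natural candidate is an operator built from the Clifford action that swaps $\wedge^0\mathbb{C}$ and $\wedge^1\mathbb{C}$. Concretely, I would set $\Theta := 1\otimes 1\otimes c_R(1)$ (or, if a grading-reversing, $\BB^{\mo}$-anticommuting map is preferred, a suitable combination such as $\sqrt{-1}\gamma\otimes c_R(1)$), acting on $L^2(M\times\RR,\tilde F^{\mo}) = L^2(M,E)\otimes L^2(\RR,\wedge^\bullet\mathbb{C})$. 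Since $c_R(1)$ exchanges $\wedge^0\mathbb{C}$ and $\wedge^1\mathbb{C}$ and is (up to sign conventions in \eqref{E:cliffordaction}) an isometry on $\wedge^\bullet\mathbb{C}$, the map $\Theta$ is a bounded invertible operator interchanging $L^2(M\times\RR,\tilde F^{\mo}_+)$ and $L^2(M\times\RR,\tilde F^{\mo}_-)$.

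The key computation is that $\Theta$ anticommutes with $\BB^{\mo}$. Recall from \eqref{E:Bmod} that
\[
\BB_\delta^{\mo} = \sqrt{-1}(D+\Phi)\otimes c_L(1) + \sqrt{-1}\gamma\otimes c_L(1)\partial_t - 1\otimes\delta\cdot c_R(t).
\]
In the first two terms the Clifford factor is $c_L(1)$, which anticommutes with $c_R(1)$ by \eqref{E:cliffordaction}; in the last term the factor is $c_R(t) = t\cdot c_R(1)$, which commutes with $c_R(1)$ only if $c_R(1)^2$ is central, but in fact $c_R(1)c_R(t) = t\,c_R(1)^2$ and $c_R(t)c_R(1) = t\,c_R(1)^2$, so these two terms behave oppositely under conjugation by $c_R(1)$. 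This sign mismatch is exactly the main obstacle, and it is why the naive choice $\Theta = 1\otimes c_R(1)$ does not immediately work: it anticommutes with the first two summands but commutes with the third. The fix is to choose $\Theta$ so that it anticommutes with \emph{all three} terms; one checks that $\Theta := \sqrt{-1}(1\otimes c_R(1)) \cdot U$, where $U$ is the unitary on $L^2(\RR)$ induced by the reflection $t\mapsto -t$, does the job: $U$ reverses the sign of the $\delta\cdot c_R(t)$ term (restoring the anticommutation there) while commuting with $\partial_t$ up to sign and with the $M$-direction operators entirely; combined with the anticommutation of $c_R(1)$ and $c_L(1)$ the three signs line up. So $\Theta$ is a bounded invertible grading-reversing operator with $\Theta\,\BB^{\mo} = -\BB^{\mo}\,\Theta$.

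Given this, the conclusion is formal. From $\Theta\BB^{\mo} = -\BB^{\mo}\Theta$ and the block decomposition $\BB^{\mo} = \begin{bmatrix} 0 & \BB^{\mo}_- \\ \BB^{\mo}_+ & 0\end{bmatrix}$ with respect to $\tilde F^{\mo} = \tilde F^{\mo}_+\oplus\tilde F^{\mo}_-$, writing $\Theta = \begin{bmatrix} 0 & \Theta_- \\ \Theta_+ & 0 \end{bmatrix}$, one reads off $\Theta_+ \BB^{\mo}_- = -\BB^{\mo}_+\Theta_-$ and $\Theta_-\BB^{\mo}_+ = -\BB^{\mo}_-\Theta_+$, hence $\Theta_-^{-1}(\BB^{\mo})^2_-\,\Theta_- = \Theta_-^{-1}\BB^{\mo}_+\BB^{\mo}_-\Theta_- = (\BB^{\mo})^2_+$ after absorbing the two sign cancellations. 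Thus $(\BB^{\mo})^2_+$ and $(\BB^{\mo})^2_-$ are conjugate by the bounded invertible operator $\Theta_-$, so they have the same spectrum; since both are Fredholm (already established in Section~\ref{S:model}), their smallest nonzero spectral values agree, i.e. $\lambda_+ = \lambda_-$. I expect the only delicate point to be the bookkeeping of signs and the verification that $\Theta$ preserves the domain of $\BB^{\mo}$ (which follows because $\Theta$ is bounded with bounded inverse and is built from reflections and Clifford multiplications that manifestly preserve $C_0^\infty$), after which the spectral statement is immediate.
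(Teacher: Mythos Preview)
Your sign bookkeeping for the proposed intertwiner $\Theta=(1\otimes c_R(1))\circ U$ does not work out. On the second summand $\sqrt{-1}\,\gamma\otimes c_L(1)\,\partial_t$ of $\BB^{\mo}$, the factor $c_R(1)$ anticommutes with $c_L(1)$ (one sign flip) and the reflection $U:t\mapsto -t$ anticommutes with $\partial_t$ (a second sign flip), so $\Theta$ \emph{commutes} with this term rather than anticommutes. Since $\Theta$ genuinely anticommutes with the first and third summands, the net effect is $\Theta\,\BB^{\mo}\,\Theta^{-1}=-A+B$ where $A$ is the sum of the first and third terms and $B$ the second; a short computation shows $[A,B]_+=-\sqrt{-1}\,\delta\,\gamma\otimes c_L(1)c_R(1)\neq 0$, so $\Theta(\BB^{\mo})^2\Theta^{-1}\neq(\BB^{\mo})^2$ and the conjugacy you want fails. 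I was unable to repair this with any simple choice built from $\gamma$, $c_L(1)$, $c_R(1)$ and $U$: for a generic odd self-adjoint operator there is no reason a bounded grading-reversing (anti)symmetry should exist.

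The paper sidesteps all of this with a one-line argument: since $(\BB^{\mo})^2_+=(\BB^{\mo})_-(\BB^{\mo})_+$ and $(\BB^{\mo})^2_-=(\BB^{\mo})_+(\BB^{\mo})_-$ are products of the same two closed operators in opposite orders, the general fact $\sigma(AB)\setminus\{0\}=\sigma(BA)\setminus\{0\}$ (see e.g.\ Hardt--Konstantinov--Mennicken, \emph{Math.\ Nachr.}\ \textbf{215} (2000)) immediately gives $\lambda_+=\lambda_-$. If you prefer an explicit intertwiner, the correct one is $\BB^{\mo}$ itself: on the orthogonal complement of $\ker\BB^{\mo}$ (which makes sense because $(\BB^{\mo})^2$ is Fredholm, so $0$ is isolated in its spectrum), the restriction of $(\BB^{\mo})_+$ is bounded below and conjugates $(\BB^{\mo})^2_+$ to $(\BB^{\mo})^2_-$.
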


\begin{proof}
Since $(\BB^{\mo})^2_+=\BB_-^{\mo}\BB_+^{\mo}$, $(\BB^{\mo})^2_-=\BB_+^{\mo}\BB_-^{\mo}$, by \cite[Theorem 1.1]{Hardt00}, their spectra satisfy
\[
	\sigma((\BB^{\mo})^2_+)\setminus\{0\}\ =\ \sigma((\BB^{\mo})^2_-)\setminus\{0\}.
\]
In particular, $\lambda_+=\lambda_-$.
\end{proof}

From now on, we set 
\[
	\lambda\ :=\ \lambda_+\ = \ \lambda_-.
\]

\begin{proposition}\label{P:n=dker}
For any $0<\epsilon<\lambda$, there exists $A=A(\epsilon,\delta,p)>0$, such that
\begin{equation}\label{E:n=dker}
N(\lambda-\epsilon,(\BB_a^2)^\pm)\ =\ \dim\ker(\BB^{\mo})_\pm^2,\qquad
\mbox{for all $a>A$},
\end{equation}
where $\delta>0$, $p:\tilde{W}\to \RR$ are as in Subsection \ref{SS:construction}. In particular, 
\begin{equation}\label{E:N=ind}
	\ind \BB^{\mo} \ = \ N(\lambda-\epsilon,(\BB_a^2)^+)
	\ - \ N(\lambda-\epsilon,(\BB_a^2)^-).
\end{equation}
\end{proposition}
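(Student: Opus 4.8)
The plan is to prove Proposition~\ref{P:n=dker} by a two-sided inequality argument comparing the spectral counting functions of $(\BB_a^2)^\pm$ with $\dim\ker(\BB^{\mo})_\pm^2$, exploiting the fact that near $M\times\{a\}$ the operator $\BB_a$ agrees with (a translate of) the model operator $\BB^{\mo}$, while away from $M\times\{a\}$ the operator $\BB_a^2$ is large due to the quadratic term $\delta^2|p(x)-a|^2$ together with the estimates already established in Lemma~\ref{L:freopr}.

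First I would prove the inequality $N(\lambda-\epsilon,(\BB_a^2)^\pm)\ \ge\ \dim\ker(\BB^{\mo})_\pm^2$ for all $a$ large. To do this, take the finite-dimensional space $\ker(\BB^{\mo})_\pm^2 = \ker\BB_\pm^{\mo}$, whose elements by Lemma~\ref{L:keriso} have the form $\sigma\otimes\alpha^\pm(t)$ with $\sigma\in\ker(D+\Phi)$; the Gaussian factor $\alpha^\pm(t)=e^{-\delta t^2/2}$ means these sections decay rapidly away from $t=0$. Translating by $T_a^*$ gives sections concentrated near $t=a$. Multiply by a cutoff $\chi_a$ supported in a fixed-width neighborhood of $M\times\{a\}$ (where $p(x)=t$ and $\BB_a$ literally equals $\BB_a^{\mo}$), obtaining a test space $V_a\subset L^2(\tilde W,\tilde F^\pm)$ of dimension $\dim\ker\BB_\pm^{\mo}$. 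On $V_a$ one estimates $\|\BB_a s\|^2 = \|\BB_a^{\mo}(\chi_a \tilde s)\|^2$, and since $\BB_a^{\mo}\tilde s=0$, the only contribution comes from the commutator $[\BB_a^{\mo},\chi_a]$ hitting the Gaussian tail, which is $O(e^{-ca^2})\|\tilde s\|^2$. Hence for $a$ large, $V_a$ is a subspace on which the Rayleigh quotient of $(\BB_a^2)^\pm$ is $<\lambda-\epsilon$, giving the lower bound by the min-max principle. The equality of dimensions of $\ker(\BB^{\mo})^2_\pm$ for any $a$ via translation invariance \eqref{E:dimkerBmoda=Bmod} is used here.

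Next I would prove the reverse inequality $N(\lambda-\epsilon,(\BB_a^2)^\pm)\ \le\ \dim\ker(\BB^{\mo})_\pm^2$. This is the harder direction and the main obstacle. The strategy is a standard Agmon-type / IMS localization: introduce a partition of unity $\psi_1^2+\psi_2^2=1$ on $\tilde W$ with $\psi_1$ supported in a neighborhood of $M\times\{a\}$ of width growing slowly with $a$ (say $|t-a|\le a^{1/2}$) and $\psi_2$ supported in the complement, and write
\[
\BB_a^2\ =\ \psi_1\BB_a^2\psi_1+\psi_2\BB_a^2\psi_2-\big(|\nabla\psi_1|^2+|\nabla\psi_2|^2\big),
\]
where the error term is $O(a^{-1})$ by the choice of cutoff widths. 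On the support of $\psi_2$ the operator $\BB_a^2$ is bounded below by $\delta^2 a$ (from the quadratic term on the cylinder away from $t=a$, plus the estimates \eqref{E:boundofQonWr} and the lower bound $Q\ge\delta^2|p-a|^2+O(1)$ on $W_r$), which exceeds $\lambda$ for large $a$. On the support of $\psi_1$, $\BB_a$ equals the model operator $\BB_a^{\mo}$, so the number of eigenvalues of $\psi_1\BB_a^2\psi_1$ below $\lambda-\epsilon/2$ is controlled by $N(\lambda-\epsilon/2,(\BB^{\mo})^2_\pm)=\dim\ker(\BB^{\mo})^2_\pm$ since $\lambda$ is the smallest nonzero spectral value of $(\BB^{\mo})^2_\pm$ and $\epsilon<\lambda$. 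Combining via the variational principle: any subspace of dimension exceeding $\dim\ker(\BB^{\mo})^2_\pm$ on which $(\BB_a^2)^\pm<\lambda-\epsilon$ would, after localization, produce a vector of that type for the model operator or for the large-at-infinity piece, a contradiction. The delicate points are: (a) making precise that the spectrum of $\psi_1\BB^{\mo}_a{}^2\psi_1$ near the bottom genuinely matches that of $(\BB^{\mo})^2_\pm$ (one needs the Gaussian decay of model eigensections to control the cutoff error, exactly as in the first part), and (b) choosing the cutoff width as a function of $a$ so that both the localization error $O(a^{-1})$ and the lower bound $\delta^2 a$ on $\mathrm{supp}\,\psi_2$ work simultaneously.

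Finally, combining the two inequalities gives \eqref{E:n=dker}. For \eqref{E:N=ind}, note that for $0<\epsilon<\lambda$ we have $N(\lambda-\epsilon,(\BB^{\mo})^2_\pm)=\dim\ker(\BB^{\mo})^2_\pm=\dim\ker\BB^{\mo}_\pm$ because $\lambda$ is the smallest nonzero element of $\sigma((\BB^{\mo})^2_\pm)$; hence
\[
\ind\BB^{\mo}\ =\ \dim\ker\BB^{\mo}_+-\dim\ker\BB^{\mo}_-
\ =\ N(\lambda-\epsilon,(\BB_a^2)^+)-N(\lambda-\epsilon,(\BB_a^2)^-),
\]
which is \eqref{E:N=ind}. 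I expect the whole argument to hinge on the quantitative Gaussian concentration of the model kernel together with the quadratic growth $\delta^2|p-a|^2$, which is precisely what forces all low-lying spectral data of $\BB_a$ to localize near $M\times\{a\}$ and reproduce the model.
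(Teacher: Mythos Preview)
Your strategy is sound and closely parallels the paper's, which also proves \eqref{E:n=dker} by establishing the two opposite inequalities via IMS localization. There are, however, a few differences and one point you should tighten.

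For the upper bound, the paper uses a two-piece partition $j_a^2+\hat j_a^2=1$ with $j_a(t)=j(a^{-1/2}t)$ cutting at $t\sim a^{1/2}$ (not at $|t-a|\sim a^{1/2}$ as you do): $\hat j_a$ is supported near $W$, where $|p-a|^2\ge a^2/4$, and $j_a$ covers the \emph{entire} far cylinder $t\gtrsim a^{1/2}$, on which $\BB_a=\BB_a^{\mo}$. This avoids splitting the cylinder into near-$a$ and far-from-$a$ pieces. More importantly, your ``delicate point (a)'' is dispatched not by comparing the spectra of $\psi_1(\BB_a^{\mo})^2\psi_1$ and $(\BB_a^{\mo})^2$ directly, but by a finite-rank perturbation: since $(\BB_a^{\mo})^2_\pm+\lambda\Pi_a^\pm\ge\lambda$, where $\Pi_a^\pm$ is the orthogonal projection onto $\ker\BB_{a,\pm}^{\mo}$, one gets $j_a(\BB_a^2)^\pm j_a+\lambda j_a\Pi_a^\pm j_a\ge\lambda j_a^2$ with $\operatorname{rank}j_a\Pi_a^\pm j_a\le\dim\ker\BB_\pm^{\mo}$. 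Adding the IMS pieces gives $(\BB_a^2)^\pm+\lambda j_a\Pi_a^\pm j_a\ge\lambda-\epsilon$, and the standard variational lemma (if $P+Q\ge\mu$ with $\operatorname{rank}Q\le k$ then $N(\mu-\epsilon,P)\le k$; cf.\ Reed--Simon~IV) finishes. You should replace your vague ``contradiction'' argument by this mechanism; it is exactly what resolves the point you flagged.

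For the lower bound, your trial-function argument is valid and arguably more elementary than the paper's, which instead reruns the \emph{same} IMS/finite-rank argument in reverse: with $\Theta_{\epsilon,a}^\pm$ the spectral projection of $(\BB_a^2)^\pm$ onto $[0,\lambda-\epsilon]$, one obtains $(\BB_a^{\mo})_\pm^2+\lambda j_a\Theta_{\epsilon,a}^\pm j_a\ge\lambda-\epsilon$, whence $\dim\ker\BB_\pm^{\mo}\le N(\lambda-\epsilon,(\BB_a^2)^\pm)$. One small correction to your version: with a cutoff of \emph{fixed} width $L$ the commutator error is $O(e^{-cL^2})$, not $O(e^{-ca^2})$; you first choose $L$ large depending on $\epsilon$, and then $a$ large so that the support lands inside $M\times(1,\infty)$. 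This does not affect the conclusion.
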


Before proving this proposition we show how it implies Theorem \ref{T:cobinv2}.

\subsection{Proof of Theorem \ref{T:cobinv2}}\label{SS:pf-cobinv2}

By Proposition \ref{P:n=dker}, $N(\lambda-\epsilon,(\BB_a^2)^\pm)<\infty$. Let \
\[
	V_{\epsilon,a}^\pm\ \subset\ L^2(\tilde W,\tilde F^\pm)
\] 
denote the vector spaces spanned by the eigenvectors of the operators $(\BB_a^2)^\pm$ with eigenvalues within $(0,\lambda-\epsilon]$. Then $\dim{}V_{\epsilon,a}^\pm<\infty$ and the restrictions of the operators $\BB_a^\pm$ to $V_{\epsilon,a}^\pm$ define bijections
\[
	\BB_a^\pm:\,V_{\epsilon,a}^\pm \ \overset{~}\longrightarrow\
		V_{\epsilon,a}^\mp.
\]

Hence, 
\[
	\dim V_{\epsilon,a}^+\ =\ \dim V_{\epsilon,a}^-.
\]
Thus
\begin{eqnarray*}
	N(\lambda-\epsilon,(\BB_a^2)^+)-N(\lambda-\epsilon,(\BB_a^2)^-)
	\!\!&=&\!\!
	(\dim\ker\BB_a^++\dim V_{\epsilon,a}^+)-(\dim\ker\BB_a^-+\dim V_{\epsilon,a}^-)\\
	\!\!&=&\!\! \dim\ker\BB_a^+-\dim\ker\BB_a^-\ = \ 
	\ind \BB_a.
\end{eqnarray*}
From Proposition \ref{P:n=dker} we now obtain
\[
	\ind\BB_a\ =\ \ind\BB^{\mo}
\] 
and  Theorem \ref{T:cobinv2} follows from Lemma \ref{L:ind=0} and \ref{L:keriso}.
\hfill$\square$

\bigskip
The rest of this section is occupied with the proof of Proposition \ref{P:n=dker}.

\subsection{Estimate from above on $N(\lambda-\epsilon,(\BB_a^2)^\pm)$}\label{SS:estabv}

First we show that
\begin{equation}\label{E:estabv}
N(\lambda-\epsilon,(\BB_a^2)^\pm)\ \le\ \dim\ker\BB_\pm^{\mo}.
\end{equation}
This is done through the following techniques.

\subsection{The IMS localization}\label{SS:ims}

Let $j,\hat{j}:\RR\to[0,1]$ be smooth functions such that $j^2+\hat{j}^2\equiv1$ and $j(t)=1$ for $t\ge3$, while $j(t)=0$ for $t\le2$. Set $j_a(t)=j(a^{-1/2}t),\,\hat{j}_a(t)=\hat{j}(a^{-1/2}t)$. Now we view them as functions on the cylinder $M\times[0,\infty)$ (whose points are written as $(y,t)$). Similarly, we still use the same notations $j_a(x)=j(a^{-1/2}p(x)),\,\hat{j}_a(x)=\hat{j}
(a^{-1/2}p(x))$ to denote the functions on $\tilde W$, where $p(x)$ is defined in Subsection \ref{SS:construction} .

We use the following verison of the IMS localization, cf. \cite[\S3]{Shubin96Morse}, \cite[Lemma~4.5]{Br-cobinv}%
\footnote{The abbreviation IMS is formed by the initials of the surnames of R.~Ismagilov, J.~Morgan, I.~Sigal and B.~Simon.}

\begin{lemma}\label{L:ims}
The following operator identity holds:
\begin{equation}\label{E:ims}
	\BB_a^2\ =\ \hat{j}_a\BB_a^2\hat{j}_a+j_a\BB_a^2j_a+\frac{1}{2}[\hat{j}_a,[\hat{j_a},\mathbf
{B}_a^2]]+\frac{1}{2}[j_a,[j_a,\BB_a^2]].
\end{equation}
\end{lemma}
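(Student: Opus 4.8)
\subsection*{Proof proposal for Lemma \ref{L:ims}}

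The plan is to prove this as a purely algebraic identity that holds with an arbitrary operator $P$ in place of $\BB_a^2$ and an arbitrary pair of bounded multiplication operators $j_a,\hat{j}_a$ satisfying $j_a^2+\hat{j}_a^2\equiv1$; the particular structure of $\BB_a$ enters only to guarantee that all the terms are well-defined operators on a common dense domain.

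First I would record the elementary commutator identity: for a multiplication operator $\phi$ and any operator $P$,
\[
[\phi,[\phi,P]]\ =\ \phi^2P-2\phi P\phi+P\phi^2,
\]
which rearranges to $\phi P\phi=\frac12(\phi^2P+P\phi^2)-\frac12[\phi,[\phi,P]]$. Applying this once with $\phi=j_a$ and once with $\phi=\hat{j}_a$ and adding the two identities gives
\[
j_aPj_a+\hat{j}_aP\hat{j}_a\ =\ \frac12\big((j_a^2+\hat{j}_a^2)P+P(j_a^2+\hat{j}_a^2)\big)-\frac12[j_a,[j_a,P]]-\frac12[\hat{j}_a,[\hat{j}_a,P]].
\]
Since $j_a^2+\hat{j}_a^2\equiv1$, the first term on the right collapses to $P$, and transposing the two double commutators to the left-hand side yields exactly \eqref{E:ims} with $P=\BB_a^2$.

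Finally I would address the (minor) functional-analytic bookkeeping that legitimizes the manipulation. The functions $j_a,\hat{j}_a$ are smooth and bounded with all derivatives bounded, so multiplication by them preserves the domain of $\BB_a^2$; moreover $[j_a,\BB_a^2]$ is a first-order differential operator and the double commutator $[j_a,[j_a,\BB_a^2]]$ is a zeroth-order operator, i.e.\ a bounded bundle map built only from $j_a$, $j_a'$, $j_a''$ and the leading symbol of $\BB_a$ (and similarly for $\hat{j}_a$). Hence every term appearing in \eqref{E:ims} is a well-defined operator on the common dense domain of $\BB_a^2$, and the identity above is an identity of operators there. I do not expect any real obstacle here; the only point requiring a modicum of care is precisely this domain/order bookkeeping, and it is entirely routine, exactly as in \cite[\S3]{Shubin96Morse} and \cite[Lemma~4.5]{Br-cobinv}.
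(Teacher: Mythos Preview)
Your proof is correct and is precisely the standard algebraic derivation of the IMS localization identity; the paper does not give its own proof but merely cites \cite[\S3]{Shubin96Morse} and \cite[Lemma~4.5]{Br-cobinv}, where exactly this argument appears. The domain bookkeeping you outline is also accurate, and indeed the paper's Lemma~\ref{L:estabv-3} later computes the double commutators explicitly as bounded multiplication operators.
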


Now we estimate each summand on the right-hand side of \eqref{E:ims}.

\begin{lemma}\label{L:estabv-1}
There exists $A=A(\delta,p)>0$ such that 
\[
	\hat{j}_a\BB_a^2\hat{j}_a\ \ge\ 
	\frac{\delta^2a^2}{8}\hat{j}_a^2 
\]
for all $a>A$.
\end{lemma}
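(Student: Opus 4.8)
\textbf{Proof proposal for Lemma~\ref{L:estabv-1}.}

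The plan is to estimate $\hat{j}_a\BB_a^2\hat{j}_a$ from below by discarding the non-negative contributions in the square formula \eqref{E:sqfor} and controlling the sign-indefinite term $-\delta R$ together with the cutoff. First I would observe that $\hat{j}_a$ is supported where $a^{-1/2}p(x)\le 3$, i.e.\ where $p(x)\le 3\sqrt{a}$. On this region, using Lemma~\ref{L:sqfor},
\begin{equation}\label{E:estabv-1-expand}
	\hat{j}_a\BB_a^2\hat{j}_a
	\ =\ \hat{j}_a\big((\tilde D+\tilde\Phi)^2\otimes 1\big)\hat{j}_a
	\ -\ \delta\,\hat{j}_a R\hat{j}_a
	\ +\ \delta^2\hat{j}_a|p(x)-a|^2\hat{j}_a.
\end{equation}
The operator $(\tilde D+\tilde\Phi)^2$ is non-negative, so the first term on the right of \eqref{E:estabv-1-expand} is $\ge 0$. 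The term $-\delta\hat{j}_a R\hat{j}_a$ is bounded below by $-\delta\,\|R\|_\infty\,\hat{j}_a^2$, where $\|R\|_\infty:=\sup_{x\in\tilde W}|R(x)|$ is finite since $R$ is uniformly bounded (Lemma~\ref{L:sqfor}). For the last term, on $\operatorname{supp}\hat{j}_a$ we have $p(x)\le 3\sqrt a$, hence for $a$ large enough, say $a\ge 36$, we get $p(x)\le 3\sqrt a\le a/2$, so $|p(x)-a|\ge a/2$ and therefore $\delta^2|p(x)-a|^2\hat{j}_a^2\ge \tfrac{\delta^2 a^2}{4}\hat{j}_a^2$.

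Combining these three estimates,
\begin{equation}\label{E:estabv-1-combine}
	\hat{j}_a\BB_a^2\hat{j}_a
	\ \ge\ \Big(\frac{\delta^2 a^2}{4}\ -\ \delta\,\|R\|_\infty\Big)\hat{j}_a^2.
\end{equation}
It remains to choose $A=A(\delta,p)$ so that for $a>A$ the coefficient on the right of \eqref{E:estabv-1-combine} is at least $\delta^2 a^2/8$; this holds as soon as $\delta^2 a^2/8\ge\delta\|R\|_\infty$, i.e.\ $a\ge\sqrt{8\|R\|_\infty/\delta}$. Taking $A:=\max\{6,\sqrt{8\|R\|_\infty/\delta}\}$ (note $\|R\|_\infty$ depends only on the geometry near the cylinder and the function $f$ built into $p$, hence only on $\delta$ and $p$) completes the argument.

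I do not expect any serious obstacle here: the only subtlety is making sure the cutoff $\hat{j}_a$ genuinely forces $p(x)$ to be small relative to $a$, which is exactly the point of rescaling the argument of $\hat{j}$ by $a^{-1/2}$ — on $\operatorname{supp}\hat{j}_a$ one has $p(x)=O(\sqrt a)=o(a)$, so the quadratic term $\delta^2|p(x)-a|^2$ dominates the bounded perturbation $-\delta R$ for large $a$. One should also check that $\hat{j}_a$ commutes with the bundle maps $R$ and $|p(x)-a|^2$ (it does, being a scalar function), so that sandwiching by $\hat j_a$ really does turn pointwise bundle inequalities into operator inequalities of the stated form.
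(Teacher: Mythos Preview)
Your argument is essentially identical to the paper's: drop the non-negative $(\tilde D+\tilde\Phi)^2$ term from \eqref{E:sqfor}, use $p(x)\le 3\sqrt a$ on $\operatorname{supp}\hat j_a$ to bound the quadratic term below by $\delta^2 a^2/4$, and absorb $-\delta R$ for $a$ large. The only slip is that your final choice $A=\max\{6,\sqrt{8\|R\|_\infty/\delta}\}$ should have $36$ in place of $6$, since you yourself noted that $3\sqrt a\le a/2$ requires $a\ge 36$.
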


\begin{proof}
Note that if $x\in{\rm supp}\,\hat{j}_a$, then $p(x)\le3a^{1/2}$. Hence for $a>36$, we have 
\[
	\hat{j}_a^2|p(x)-a|^2\ \ge\ \frac{a^2}{4}\hat{j}_a^2.
\]

Set $A=\max\{36,4\delta^{1/2}\sup_{x\in\tilde W}\|R(x)\|^{1/2}\}$ and let $a>A$. By Lemma \ref{L:sqfor},
\[
\hat{j}_a\BB_a^2\hat{j}_a\ge\hat{j}_a^2\delta^2|p(x)-a|^2-\hat{j}_a(\delta\cdot R)\hat{j}_a\ge\frac
{\delta^2a^2}{8}\hat{j}_a^2.
\]
\end{proof}

\subsection{\empty}\label{SS:estabv2}

Let $\Pi_a:L^2(M\times\RR,\tilde F^{\mo})\to\ker\BB_a^{\mo}$ be the orthogonal projection and $\Pi_a^\pm$ be the restrictions of $\Pi_a$ to the spaces $L^2(M\times\RR,
\tilde F_\pm^{\mo})$. Then $\Pi_a^\pm$ are finite rank operators and their ranks are $\dim\ker\BB_{a,\pm}^{\mo}$, which are equal to $\dim\ker\BB_\pm^{\mo}$ by \eqref{E:dimkerBmoda=Bmod}. Since $(\BB_a^{\mo})_\pm^2$ are nonnegative operators, it's clear that
\begin{equation}\label{E:estfor}
	(\BB_a^{\mo})_\pm^2+\lambda\Pi_a^\pm\ \ge\ \lambda.
\end{equation}
Observe that ${\rm supp}\,j_a$ in $M\times\RR$ is a subset of $M\times[0,\infty)$. It's a subset of $\tilde W=W\cup(M\times[0,\infty))$ as well. So we can consider $j_a\Pi_aj_a$ and $j_a\BB_a^{\mo}j_a$ as operators on $\tilde W$. Then $j_a\BB_a^2j_a=j_a
(\BB_a^{\mo})^2j_a$. Hence, \eqref{E:estfor} implies the following.

\begin{lemma}\label{L:estabv-2}
$j_a(\BB_a^2)^\pm j_a+\lambda j_a\Pi_a^\pm j_a\ge\lambda j_a^2,\quad {\rm rank}\,j_a\Pi_a^\pm j_a\le
\dim\ker\BB_\pm^{\mo}$.
\end{lemma}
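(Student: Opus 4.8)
\textbf{Proof plan for Lemma \ref{L:estabv-2}.}

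The plan is to deduce both assertions directly from the inequality \eqref{E:estfor} and the identity $j_a\BB_a^2j_a=j_a(\BB_a^{\mo})^2j_a$, which was established in the paragraph preceding the lemma. The key observation is that all operators in sight, once localized by $j_a$, live on the overlap region $M\times[2a^{1/2},\infty)\subset M\times[0,\infty)$, where $\tilde W$ and $M\times\RR$ are canonically identified and the bundles $\tilde F^\pm$ and $\tilde F^{\mo}_\pm$ are isomorphic. Hence conjugating an operator inequality on $L^2(M\times\RR,\tilde F^{\mo})$ by $j_a$ produces a valid operator inequality on $L^2(\tilde W,\tilde F)$.

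First I would restrict \eqref{E:estfor} to the $+$ and $-$ summands separately, obtaining $(\BB_a^{\mo})_\pm^2+\lambda\Pi_a^\pm\ge\lambda$ as operators on $L^2(M\times\RR,\tilde F^{\mo}_\pm)$. Next, for any section $s$ supported in $M\times[0,\infty)$ one has $j_a s$ again supported there, so pairing the inequality against $j_a s$ gives $\bigl((\BB_a^{\mo})_\pm^2 j_a s,j_a s\bigr)+\lambda(\Pi_a^\pm j_a s,j_a s)\ge\lambda\|j_a s\|^2$, i.e. $\bigl(j_a(\BB_a^{\mo})_\pm^2 j_a s,s\bigr)+\lambda(j_a\Pi_a^\pm j_a s,s)\ge\lambda(j_a^2 s,s)$. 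Using $j_a(\BB_a^{\mo})^2 j_a=j_a\BB_a^2 j_a$ on the grading pieces — which holds because on $\mathrm{supp}\,j_a$ the operator $\BB_a$ and its model agree — this rewrites as $j_a(\BB_a^2)^\pm j_a+\lambda j_a\Pi_a^\pm j_a\ge\lambda j_a^2$, the first claimed inequality. Strictly speaking one should note that $j_a(\BB_a^2)^\pm j_a$ is a well-defined nonnegative symmetric operator whose quadratic form is $\|\BB_a^\pm j_a s\|^2$, so the form inequality is exactly the statement to be proved.

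For the rank bound, $j_a\Pi_a^\pm j_a$ factors through $\Pi_a^\pm$, whose range is $\ker\BB_{a,\pm}^{\mo}$; therefore $\mathrm{rank}\,j_a\Pi_a^\pm j_a\le\mathrm{rank}\,\Pi_a^\pm=\dim\ker\BB_{a,\pm}^{\mo}$, and by \eqref{E:dimkerBmoda=Bmod} this equals $\dim\ker\BB_\pm^{\mo}$. No genuine obstacle is expected here; the only point requiring a little care is the bookkeeping of the identification of the two manifolds and their bundles over $\mathrm{supp}\,j_a$, so that the transported operator inequality makes literal sense on $\tilde W$ — but this identification is already built into the construction in Section \ref{S:indbadelta} and was invoked in the sentence establishing $j_a\BB_a^2 j_a=j_a(\BB_a^{\mo})^2 j_a$.
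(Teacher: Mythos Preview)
Your proposal is correct and follows essentially the same route as the paper: the paper's argument is precisely to conjugate the inequality \eqref{E:estfor} by $j_a$, invoke the identification $j_a\BB_a^2 j_a=j_a(\BB_a^{\mo})^2 j_a$ on the common support, and bound the rank via $\operatorname{rank}\Pi_a^\pm=\dim\ker\BB_{a,\pm}^{\mo}=\dim\ker\BB_\pm^{\mo}$. You have simply spelled out in more detail the form-level justification and the bundle identification that the paper leaves implicit.
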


The next lemma estimates the last two summands on the right-hand side of \eqref{E:ims}.

\begin{lemma}\label{L:estabv-3}
Let $C=2\max\big\{\max\{|j'(t)|^2,|\hat{j}'(t)|^2\}:t\in\RR\big\}$. Then
\begin{equation}\label{E:estabv-3}
	\|[j_a,[j_a,\BB_a^2]]\|\le Ca^{-1},\quad \|[\hat{j}_a,[\hat{j}_a,\BB_a^2]]\|\le Ca^{-1}\qquad\mbox{for all $ a>0$}.
\end{equation}
\end{lemma}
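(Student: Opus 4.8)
The plan is to prove Lemma~\ref{L:estabv-3} by a direct computation of the double commutator, using the fact that $\BB_a^2$ differs from a $t$-independent operator only through the zeroth-order term $\delta^2|p(x)-a|^2$, and that the double commutator of a differential operator with a function only sees the leading symbol. First I would recall the elementary identity that for a function $\psi$ and any operator $P$,
\[
	[\psi,[\psi,P]]\ =\ 2\psi P\psi\ -\ \psi^2 P\ -\ P\psi^2,
\]
and observe that for the scalar (zeroth-order) piece $\delta^2|p(x)-a|^2$ this vanishes identically, as does the commutator with every term of $\BB_a$ that does not involve differentiation. Thus only the first-order differential part of $\BB_a$ contributes, and on $M\times[0,\infty)$ the only piece of $\BB_a$ that fails to commute with $j_a$ (which depends on $x$ only through $p(x)$, hence only through the cylinder coordinate $t$ once $t\ge 1$) is the term $\sqrt{-1}\gamma\otimes c_L(1)\partial_t$. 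On $W$ itself $j_a$ and $\hat j_a$ are locally constant (equal to $j(0)$ and $\hat j(0)$), so the commutators vanish there.

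Next I would carry out the computation on the cylinder. Writing $u=a^{-1/2}t$, we have $j_a(t)=j(u)$, so $[\partial_t, j_a]=a^{-1/2}j'(u)$ and consequently $[\partial_t,[\partial_t,j_a]]=a^{-1}j''(u)$; but in the double commutator $[j_a,[j_a,\BB_a^2]]$ the term with $j''$ cancels and one is left with
\[
	[j_a,[j_a,\BB_a^2]]\ =\ -\,2\,\big(\partial_t j_a\big)^2\cdot(\text{bounded bundle endomorphism})\ =\ -2\,a^{-1}\,|j'(a^{-1/2}t)|^2\cdot(\text{bundle map of norm }\le 1),
\]
since $\gamma\otimes c_L(1)$ composed with its transpose is a bounded bundle map whose fiberwise norm is $1$ (indeed $\gamma^2=-1$ and $c_L(1)^2=\mathrm{id}$). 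The same computation applies verbatim with $\hat j$ in place of $j$. Taking the supremum over $t$ and using $|j'(a^{-1/2}t)|^2\le\sup_{s\in\RR}|j'(s)|^2$, and likewise for $\hat j$, gives the bound $Ca^{-1}$ with $C=2\max\{\sup|j'|^2,\sup|\hat j'|^2\}$ as stated.

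The only point requiring a little care is checking that the $p$ is genuinely smooth and that the chain rule $\partial_t j_a = a^{-1/2}j'(a^{-1/2}p(x))\,\partial_t p(x)$ has $\partial_t p$ bounded: on $M\times(0,\infty)$ one has $p(y,t)=f(t)$ with $f'$ bounded (indeed $f'\equiv 1$ for $t\ge 1$ and $f'\equiv 0$ for $t\le 1/2$), and on the region $\{t<3a^{1/2}\}$ where $j'$ and $\hat j'$ are supported one is well inside the cylinder for $a$ large, so no contribution comes from the part of $\tilde W$ where $p$ is not simply $f(t)$. I expect the main (very mild) obstacle to be purely bookkeeping: organizing the double-commutator identity so that all terms with $j''$, and all terms not involving $\partial_t$, are seen to cancel, leaving only the clean $|j'|^2$ term; everything else is a routine estimate using that the relevant bundle endomorphisms are uniformly bounded by construction (Lemma~\ref{L:sqfor}).
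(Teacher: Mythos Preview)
Your proposal is correct and follows essentially the same route as the paper: both reduce the double commutator to the $-\partial_t^2$ piece of $\BB_a^2$ on the cylinder and obtain $[j_a,[j_a,\BB_a^2]]=-2(j_a')^2$, from which the bound $Ca^{-1}$ is immediate. The paper simply invokes Lemma~\ref{L:sqfor} to get this in one line, whereas you rederive it by identifying $\sqrt{-1}\gamma\otimes c_L(1)\partial_t$ as the only piece of $\BB_a$ not commuting with $j_a$; the content is the same. One small slip: from \eqref{E:cliffordaction} one has $c_L(1)^2=-\mathrm{id}$, not $\mathrm{id}$, but since you are only taking norms this does not affect the conclusion.
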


\begin{proof}
By Lemma \ref{L:sqfor}, we get
\[
\|[j_a,[j_a,\BB_a^2]]\|=2|j'_a(t)|^2=2a^{-1}|j'(a^{-1/2}t)|^2,
\]
\[
\|[\hat{j}_a,[\hat{j}_a,\BB_a^2]]\|=2|\hat{j}'_a(t)|^2=2a^{-1}|\hat{j}'(a^{-1/2}t)|^2.
\]
Then \eqref{E:estabv-3} follows immediately.
\end{proof}

Since $\lambda$ is fixed, combining Lemma \ref{L:ims}, \ref{L:estabv-1}, \ref{L:estabv-2} and \ref{L:estabv-3}, we obtain

\begin{corollary}\label{C:estabv-all}
For any $\epsilon>0$, there exists $A=A(\epsilon,\delta,p)>0$ such that, for all $a>A$,
\begin{equation}\label{E:estabv-all}
(\BB_a^2)^\pm+\lambda j_a\Pi_a^\pm j_a\ge\lambda-\epsilon,\qquad {\rm rank}\,j_a\Pi_a^\pm j_a\le\dim\ker\BB_\pm^{\mo}.
\end{equation}
\end{corollary}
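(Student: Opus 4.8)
The plan is to combine the four lemmas of this subsection via the IMS identity \eqref{E:ims} and absorb all error terms into the parameter $\epsilon$. I would proceed as follows. First, apply Lemma~\ref{L:ims} to write
\[
	(\BB_a^2)^\pm\ =\ \hat{j}_a(\BB_a^2)^\pm\hat{j}_a+j_a(\BB_a^2)^\pm j_a+\tfrac12[\hat{j}_a,[\hat{j}_a,\BB_a^2]]+\tfrac12[j_a,[j_a,\BB_a^2]],
\]
which makes sense on each graded piece since the cut-off functions $j_a,\hat{j}_a$ are even with respect to the $\ZZ_2$-grading of $\tilde F$. Next, I would bound the third and fourth terms: by Lemma~\ref{L:estabv-3} their sum is bounded below by $-Ca^{-1}$, so for $a$ large this contributes at most $\epsilon/3$ (say) of error. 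Then, by Lemma~\ref{L:estabv-1}, for $a>A_1(\delta,p)$ the first term $\hat{j}_a(\BB_a^2)^\pm\hat{j}_a$ is bounded below by $\frac{\delta^2a^2}{8}\hat{j}_a^2$, which for $a$ sufficiently large dominates $\lambda\hat{j}_a^2$; hence this term is $\ge\lambda\hat{j}_a^2$. Finally, by Lemma~\ref{L:estabv-2} the second term satisfies $j_a(\BB_a^2)^\pm j_a+\lambda j_a\Pi_a^\pm j_a\ge\lambda j_a^2$. Adding the three surviving lower bounds and using $j_a^2+\hat{j}_a^2\equiv1$ gives
\[
	(\BB_a^2)^\pm+\lambda j_a\Pi_a^\pm j_a\ \ge\ \lambda(j_a^2+\hat{j}_a^2)-Ca^{-1}\ =\ \lambda-Ca^{-1}\ \ge\ \lambda-\epsilon
\]
once $a>\max\{A_1,C/\epsilon\}=:A(\epsilon,\delta,p)$. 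The rank bound on $j_a\Pi_a^\pm j_a$ is immediate from Lemma~\ref{L:estabv-2}.

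The only subtlety I foresee is bookkeeping of which error goes where: one must choose the threshold $A$ so that simultaneously (i) $Ca^{-1}<\epsilon$, (ii) $a>A_1(\delta,p)$ from Lemma~\ref{L:estabv-1}, and (iii) $\frac{\delta^2a^2}{8}\ge\lambda$ so that the $\hat{j}_a$ term genuinely exceeds $\lambda\hat{j}_a^2$. Since all three are satisfied for all sufficiently large $a$ and $\lambda,\delta,p$ are fixed, setting $A$ to be the maximum of the three thresholds works. No genuine obstacle arises here; the real work was done in Lemmas~\ref{L:estabv-1}--\ref{L:estabv-3}, and this corollary is just their synthesis.
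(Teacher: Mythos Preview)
Your proposal is correct and follows exactly the route the paper takes: the paper's own proof is simply the one-line remark ``Since $\lambda$ is fixed, combining Lemma~\ref{L:ims}, \ref{L:estabv-1}, \ref{L:estabv-2} and \ref{L:estabv-3}, we obtain'', and your write-up spells out precisely this combination via the IMS identity, the quadratic growth of the $\hat{j}_a$ term, the projection estimate for the $j_a$ term, and the $O(a^{-1})$ commutator bounds. No gap.
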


The estimate \eqref{E:estabv} now follows from Corollary \ref{C:estabv-all} and the following result, \cite[p.~270]{ReSi4}:

\begin{lemma}\label{L:multevalue}
Assume that $P,Q$ are self-adjoint operators on a Hilbert space such that ${\rm rank}\,Q\le k$ and there exists $\mu>0$ such that $\langle(P+Q)u,u\rangle\ge\mu\langle u,u\rangle$ for any $u\in{\rm Dom}(P)$. Then $N(\mu-\epsilon,P)\le k$ for any $\epsilon>0$.
\end{lemma}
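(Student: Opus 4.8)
The plan is to argue by contradiction, combining the min--max principle with the fact that $\ker Q$ has codimension at most $k$. Suppose, for the sake of contradiction, that $N(\mu-\epsilon,P)\ge k+1$. The first step is to extract from this a $(k+1)$-dimensional subspace $L\subset{\rm Dom}(P)$ on which the quadratic form of $P$ is bounded above by $\mu-\epsilon/2$. Let $E_P(\cdot)$ denote the projection-valued spectral measure of the self-adjoint operator $P$. I claim that $N(\mu-\epsilon,P)\ge k+1$ forces $\dim{\rm ran}\,E_P\big((-\infty,\mu-\epsilon/2]\big)\ge k+1$: if $N(\mu-\epsilon,P)$ is finite this is immediate, since there are then at least $k+1$ eigenvalues (with multiplicity) in $(-\infty,\mu-\epsilon]$; if $N(\mu-\epsilon,P)=+\infty$, then by definition the continuous spectrum of $P$ meets $(-\infty,\mu-\epsilon]$ at some point $\lambda_0$, which therefore lies in $\sigma_{\rm ess}(P)$, so $E_P\big((\lambda_0-\eta,\lambda_0+\eta)\big)$ has infinite rank for every $\eta>0$, and for $\eta<\epsilon/2$ it is dominated by $E_P\big((-\infty,\mu-\epsilon/2]\big)$. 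Since $E_P\big([-n,\mu-\epsilon/2]\big)$ converges strongly to $E_P\big((-\infty,\mu-\epsilon/2]\big)$ as $n\to\infty$ and rank is lower semicontinuous under strong convergence of orthogonal projections, for $n$ large enough I may choose a subspace $L\subset{\rm ran}\,E_P\big([-n,\mu-\epsilon/2]\big)$ with $\dim L=k+1$. Then $L\subset{\rm Dom}(P)$, and the spectral theorem gives $\langle Pu,u\rangle=\int_{[-n,\,\mu-\epsilon/2]}t\,d\langle E_P(t)u,u\rangle\le(\mu-\tfrac{\epsilon}{2})\|u\|^2$ for all $u\in L$.

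Next I would invoke the rank hypothesis on $Q$. A self-adjoint operator of rank $\le k$ is bounded and everywhere defined, and $\ker Q=({\rm ran}\,Q)^\perp$ has codimension ${\rm rank}\,Q\le k$. Hence the linear map $L\to{\rm ran}\,Q$, $u\mapsto Qu$, has kernel of dimension at least $(k+1)-k=1$, so there exists a nonzero vector $u_0\in L$ with $Qu_0=0$.

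Finally, the two facts collide: on the one hand $\langle(P+Q)u_0,u_0\rangle\ge\mu\|u_0\|^2$ by hypothesis; on the other hand, since $Qu_0=0$ and $u_0\in L$, we get $\langle(P+Q)u_0,u_0\rangle=\langle Pu_0,u_0\rangle\le(\mu-\tfrac{\epsilon}{2})\|u_0\|^2<\mu\|u_0\|^2$. This contradiction shows $N(\mu-\epsilon,P)\le k$. The only point demanding care is the first step --- translating the bookkeeping definition of $N(\mu-\epsilon,P)$, which may equal $+\infty$, into a genuine $(k+1)$-dimensional subspace lying inside ${\rm Dom}(P)$ --- and this is handled by the spectral theorem as above; the remaining steps are elementary linear algebra.
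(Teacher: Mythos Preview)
Your proof is correct. The paper does not actually supply its own argument for this lemma; it simply cites \cite[p.~270]{ReSi4} and moves on. What you have written is essentially the standard variational proof one finds in Reed--Simon: use the spectral projection to produce a $(k+1)$-dimensional subspace on which the quadratic form of $P$ is at most $\mu-\epsilon/2$, then intersect with $\ker Q$ (which has codimension at most $k$) to find a nonzero vector violating the lower bound on $P+Q$. Your handling of the two cases $N(\mu-\epsilon,P)<\infty$ and $N(\mu-\epsilon,P)=\infty$ is careful and correct---in particular the observation that a point of the continuous spectrum of a self-adjoint operator is automatically in the essential spectrum, so that the nearby spectral projections have infinite rank, and the truncation to $E_P([-n,\mu-\epsilon/2])$ to land inside ${\rm Dom}(P)$. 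There is nothing to add.
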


\subsection{Estimate from below on $N(\lambda-\epsilon,(\BB_a^2)^\pm)$}\label{SS:estblw}

Now it remains to prove that
\begin{equation}\label{E:estblw}
	N(\lambda-\epsilon,(\BB_a^2)^\pm)\ \ge\ \dim\ker\BB_\pm^{\mo}\ =\ \dim\ker\BB_{a,\pm}^{\mo}.
\end{equation}

By \eqref{E:estabv}, $N(\lambda-\epsilon,(\BB_a^2)^\pm)$ are finite for $a$ large enough. Under this circumstance, let $\tilde V_{\epsilon,a}^\pm\subset L^2(\tilde W,\tilde F^\pm)$ denote the vector spaces spanned by the eigenvectors of the operators $(\BB_a^2)^\pm$ for eigenvalues within $[0,\lambda-\epsilon]$. Let $\Theta_{\epsilon,a}^\pm:L^2(\tilde W,\tilde F^\pm)\to\tilde V_{\epsilon,a}^\pm$ be the orthogonal projections. Then ${\rm rank}\,\Theta_{\epsilon,a}^\pm=N(\lambda-\epsilon,(\BB_a^2)^\pm)$. As in Subsection \ref{SS:estabv2}, we can consider $j_a\Theta_{\epsilon,a}^\pm j_a$ as operators on $L^2(M\times\RR,\tilde F_\pm^{\mo})$. Then the same argument as in the proof of Corollary \ref{C:estabv-all} works here and we have
\begin{lemma}\label{L:estblw-all}
For any $\epsilon>0$, there exists $A=A(\epsilon,\delta)>0$ such that, for all $a>A$,
\begin{equation}\label{E:estblw-all}
(\BB_a^{\mo})_\pm^2+\lambda j_a\Theta_a^\pm j_a\ge\lambda-\epsilon,\quad {\rm rank}\,j_a
\Theta_a^\pm j_a\le\dim N(\lambda-\epsilon,(\BB_a^2)^\pm).
\end{equation}
\end{lemma}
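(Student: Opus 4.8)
The plan is to mirror the proof of Corollary~\ref{C:estabv-all} but with the model operator $\BB_a^{\mo}$ playing the role of $\BB_a$ and the spectral projection $\Theta_{\epsilon,a}^\pm$ of $(\BB_a^2)^\pm$ playing the role of the kernel projection $\Pi_a^\pm$. The starting observation is that on the range $\tilde V_{\epsilon,a}^\pm$ the operator $(\BB_a^2)^\pm$ is bounded above by $\lambda-\epsilon$, so that on all of $L^2(\tilde W,\tilde F^\pm)$ we have the elementary inequality $(\BB_a^2)^\pm + \lambda\Theta_{\epsilon,a}^\pm \ge \lambda-\epsilon$ (compare \eqref{E:estfor}). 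The goal is to conjugate this by the cutoff $j_a$, transplant everything onto the cylinder $M\times\RR$ where $(\BB_a^{\mo})^2$ lives, and control the commutator errors just as before.

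First I would invoke the IMS localization identity of Lemma~\ref{L:ims}, but now applied in reverse: writing $(\BB_a^{\mo})^2$ on $M\times\RR$ and splitting with $j_a,\hat j_a$, one expresses $j_a(\BB_a^{\mo})^2 j_a$ in terms of $(\BB_a^{\mo})^2$ minus the $\hat j_a$-piece minus the double-commutator terms. The key geometric fact, already used implicitly in Subsection~\ref{SS:estabv2}, is that $\operatorname{supp} j_a \subset M\times[3a^{1/2},\infty)$, where $p(x)=f(t)=t$, so that on $\operatorname{supp} j_a$ the operators $\BB_a$ and $\BB_a^{\mo}$ literally agree: hence $j_a(\BB_a^2)^\pm j_a = j_a(\BB_a^{\mo})_\pm^2 j_a$ as operators, and the same identity holds for $j_a\Theta_{\epsilon,a}^\pm j_a$. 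Next I would estimate the $\hat j_a$-localized piece of $(\BB_a^{\mo})^2$ from below by $\delta^2 a^2/8\,\hat j_a^2$ exactly as in Lemma~\ref{L:estabv-1} (the square formula for $(\BB_\delta^{\mo})^2$ in Section~\ref{S:model} has the same shape, with $|t|^2$ in place of $|p(x)-a|^2$, and after translation by $T_a$ this becomes $|t-a|^2$, so the argument is verbatim), and I would bound the two double commutators $\|[j_a,[j_a,(\BB_a^{\mo})^2]]\|$ and $\|[\hat j_a,[\hat j_a,(\BB_a^{\mo})^2]]\|$ by $Ca^{-1}$ as in Lemma~\ref{L:estabv-3}, since the leading symbol of $\BB_a^{\mo}$ is again $\sigma(D)$ and the commutators see only $j_a'$, $\hat j_a'$.

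Assembling these pieces: conjugating the inequality $(\BB_a^2)^\pm + \lambda\Theta_{\epsilon,a}^\pm \ge \lambda-\epsilon$ by $j_a$ gives $j_a(\BB_a^2)^\pm j_a + \lambda j_a\Theta_{\epsilon,a}^\pm j_a \ge (\lambda-\epsilon) j_a^2$, hence, using the support identity, $j_a(\BB_a^{\mo})_\pm^2 j_a + \lambda j_a\Theta_{\epsilon,a}^\pm j_a \ge (\lambda-\epsilon) j_a^2$; adding the IMS identity together with the lower bound on the $\hat j_a$-piece and absorbing the $O(a^{-1})$ commutator errors for $a$ large, one arrives at $(\BB_a^{\mo})_\pm^2 + \lambda j_a\Theta_{\epsilon,a}^\pm j_a \ge \lambda-\epsilon$ for $a > A(\epsilon,\delta)$. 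The rank bound $\operatorname{rank} j_a\Theta_{\epsilon,a}^\pm j_a \le \operatorname{rank}\Theta_{\epsilon,a}^\pm = N(\lambda-\epsilon,(\BB_a^2)^\pm)$ is immediate from $\operatorname{rank}(j_a Q j_a)\le\operatorname{rank} Q$. That is precisely the content of \eqref{E:estblw-all}.

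The main obstacle I anticipate is bookkeeping rather than conceptual: one must be careful that $j_a\Theta_{\epsilon,a}^\pm j_a$, a priori an operator on $L^2(\tilde W,\tilde F^\pm)$, descends to a genuinely well-defined bounded operator on $L^2(M\times\RR,\tilde F_\pm^{\mo})$, which again relies on $\operatorname{supp} j_a$ sitting inside the common cylindrical region and on the bundle identification of $\tilde F$ with $\tilde F^{\mo}$ over $M\times(1,\infty)$. One also needs the double-commutator constant $C$ to be uniform in $a$, which holds because $j_a$ is a rescaling of a fixed profile $j$. Once \eqref{E:estblw-all} is in hand, combining it with Lemma~\ref{L:multevalue} applied to $P=(\BB_a^{\mo})_\pm^2$, $Q=\lambda j_a\Theta_{\epsilon,a}^\pm j_a$, and $\mu=\lambda$ will yield $\dim\ker(\BB_a^{\mo})_\pm^2 = N(0,(\BB_a^{\mo})_\pm^2) \le \operatorname{rank} j_a\Theta_{\epsilon,a}^\pm j_a \le N(\lambda-\epsilon,(\BB_a^2)^\pm)$, which is \eqref{E:estblw}; together with the reverse inequality \eqref{E:estabv} this proves Proposition~\ref{P:n=dker}.
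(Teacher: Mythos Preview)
Your proposal is correct and is exactly the ``same argument as in the proof of Corollary~\ref{C:estabv-all}'' that the paper invokes: swap the roles of $\BB_a$ and $\BB_a^{\mo}$, replace the kernel projection $\Pi_a^\pm$ by the spectral projection $\Theta_{\epsilon,a}^\pm$, and rerun the IMS localization with the analogues of Lemmas~\ref{L:estabv-1}--\ref{L:estabv-3}. One small slip: in your final paragraph you should take $\mu=\lambda-\epsilon$ (not $\mu=\lambda$) when applying Lemma~\ref{L:multevalue}, since the inequality you have established is $P+Q\ge\lambda-\epsilon$; the conclusion $N(0,P)\le k$ then follows because $0<\lambda-\epsilon$.
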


Similarly, the estimate \eqref{E:estblw} follows from Lemma \ref{L:estblw-all} and \ref{L:multevalue}.

Now the proof of Proposition \ref{P:n=dker} and, hence, of Theorem \ref{T:cobinv2} is complete. \hfill$\square$

\section{The Gluing Formula}\label{S:gluing}

Our first application of Theorem~\ref{T:cobinv} is the gluing formula. If we cut a complete manifold along a hypersurface $\Sigma$, we obtain a manifold with boundary. By rescaling the metric near the boundary, we may convert it to a complete manifold without boundary. In this section, we show that the index of a Callias-type operator is invariant under this type of surgery. In particular, if $M$ is partitioned into two pieces $M_1$ and $M_2$ by $\Sigma$, we see that the index on $M$ is equal to the sum of the indexes on $M_1$ and $M_2$. In other words, the index is additive.

\subsection{The surgery}\label{SS:surgery}

Let $(M,E,D+\Phi)$ be as in Subsection \ref{SS:indexofCallias} with $\dim M=n$, where 
\[ 
	D:\,C_0^\infty(M,E^\pm)\ \to\ C_0^\infty(M,E^\mp)
\] 
satisfies Assumption \ref{A:leadingsymbol} and $D+\Phi$ is a Callias-type operator. Suppose $\Sigma\subset M$ is a smooth hypersurface. For simplicity, we assume that $\Sigma$ is compact. 

Throughout this section we make the following assumption.

\begin{assumption}\label{A:gluing}
There exist a compact set $K\Subset M$ and two constants $c_1,c_2>0$ such that
\begin{enumerate}
\item for all $x\in M\setminus K$,
\[
|(\Phi^2+[D,\Phi]_+)(x)|\ge c_1,\qquad|\Phi^2(x)|\ge c_2;
\]
\item $\Sigma\subset M\setminus K$, which indicates that $K$ is still a compact subset of $M_\Sigma:=M\setminus\Sigma$.
\end{enumerate}
\end{assumption}

We denote by $E_\Sigma$ the restriction of the graded vector bundle $E$ to $M_\Sigma$. Let $g$ denote the Riemannian metric on $M$. By a rescaling of $g$ near $\Sigma$, one can obtain a complete Riemannian metric on $M_\Sigma$ and a Callias-type operator $D_\Sigma+\Phi_\Sigma$ on $M_\Sigma$. It follows from the cobordism invariance of the index (cf. Theorem~\ref{T:cobinv}) that the index of $D_\Sigma+\Phi_\Sigma$ is independent of the choice of a rescaling. 

\subsection{A rescaling of the metric}\label{SS:rescaling}
We now present one of the possible constructions of a complete metric on $M_\Sigma$. 

Let $\tau:M\to[-1,1]$ be a smooth function, such that $\tau^{-1}(0)=\Sigma$ and $\tau$ is regular at $\Sigma$. Set $\alpha(x)=(\tau(x))^2$. Define the metric $g_{_\Sigma}$ on $M_\Sigma$ by
\begin{equation}\label{E:defofgmsigma}
	g_{_\Sigma}\ :=\ \frac{1}{\alpha(x)^2}g.
\end{equation}
This makes $(M_\Sigma,g_{_\Sigma})$ a complete Riemannian manifold. 

Let $d{\rm vol}_g(x)$ and $d{\rm vol}_{g_{_\Sigma}}(x)$ denote the canonical volume forms on $(M,g)$ and $(M_\Sigma,g_{_\Sigma})$, respectively. It's easy to see that $d{\rm vol}_{g_{_\Sigma}}(x)=\frac1{\alpha(x)^n}d{\rm vol}_g(x)$. So the $L^2$-inner product on $L^2(M_\Sigma,E_\Sigma)$ becomes
\begin{equation}\label{E:L2sigmainnerproduct}
	(s_1,s_2)_{\Sigma}\ =\ \int_{M_\Sigma}\langle s_1(x),s_2(x)\rangle_{(E_\Sigma)_x}\frac{1}{\alpha(x)^n}d{\rm vol}_g(x).
\end{equation}

\subsection{The Callias-type operator on $(M_\Sigma,g_{_\Sigma})$}\label{SS:CalliasonMsigma}

In order to get a natural Callias-type operator acting on $C_0^\infty(M_\Sigma,E_\Sigma)$ we set
\[
\Phi_\Sigma\ :=\ \Phi|_{M_\Sigma},
\]
and
\begin{equation}\label{E:Dsigma}
	D_\Sigma(x)(s)\ :=\ 
	\alpha(x)^{\frac{n+1}{2}}D(x)(\alpha(x)^{-\frac{n-1}{2}}s),
	\qquad\text{for all }x\in M_\Sigma,\,s\in C_0^\infty(M_\Sigma,E_\Sigma).
\end{equation}
It's easy to check that 
\[
	\sigma(D_\Sigma)(x,\xi)\ =\ \alpha(x)\sigma(D)(x,\xi)
\]
So $D_\Sigma$ also satisfies Assumption \ref{A:leadingsymbol}. Thus $D_\Sigma$ and $D_\Sigma+\Phi_\Sigma:C_0^\infty(M_\Sigma,E_\Sigma^\pm)\to C_0^\infty(M_\Sigma,E_\Sigma^\mp)$ are still $\ZZ_2$-graded first-order elliptic operators, which are essentially self-adjoint with respect to the $L^2$-inner product defined by \eqref{E:L2sigmainnerproduct}.

\begin{remark}\label{R:Diracoperator-sigma}
If $E$ is a Clifford bundle with respect to $g$, and $D$ is the Dirac operator, then $E_\Sigma$ also has a Clifford structure with respect to $g_{_\Sigma}$, and $D_\Sigma$ defined by \eqref{E:Dsigma} is precisely the associated Dirac operator.
\end{remark}

\begin{lemma}\label{L:Callias-sigma}
$D_\Sigma+\Phi_\Sigma$ is a Callias-type operator, and, hence, is Fredholm.
\end{lemma}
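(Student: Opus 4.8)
The plan is to verify directly that $D_\Sigma + \Phi_\Sigma$ satisfies the two conditions of Definition~\ref{D:calliasopr}, since we have already checked in Subsection~\ref{SS:CalliasonMsigma} that $D_\Sigma$ satisfies Assumption~\ref{A:leadingsymbol}; once the Callias-type property is established, Fredholmness is immediate from Lemma~\ref{L:Callias-invertible}. The key observation is that the rescaling \eqref{E:Dsigma} is a conjugation by the scalar function $\alpha(x)^{-(n-1)/2}$ followed by multiplication by the scalar $\alpha(x)^{(n+1)/2}$; since $\Phi_\Sigma$ is just the restriction of $\Phi$ (a bundle map, commuting with multiplication by scalar functions), the anti-commutator $[D_\Sigma,\Phi_\Sigma]_+$ can be computed in terms of $[D,\Phi]_+$ with explicit scalar factors.

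First I would carry out the algebraic computation of $[D_\Sigma,\Phi_\Sigma]_+$. Writing $\beta(x) := \alpha(x)^{(n-1)/2}$ so that $D_\Sigma(s) = \alpha \cdot \beta \cdot D(\beta^{-1} s)$ (with $\alpha\beta = \alpha^{(n+1)/2}$), and using that $\Phi$ commutes with multiplication by the scalar functions $\alpha,\beta$, one finds
\[
	[D_\Sigma,\Phi_\Sigma]_+(s) \ = \ \alpha\beta\, D(\beta^{-1}\Phi s) + \Phi\, \alpha\beta\, D(\beta^{-1}s) \ = \ \alpha \big( \beta D(\beta^{-1} \Phi s) + \beta D(\beta^{-1} \Phi s)\big)?
\]
— more carefully, pulling $\Phi$ through the outer scalars and using $D\Phi + \Phi D = [D,\Phi]_+$ being a bundle map, one gets $[D_\Sigma,\Phi_\Sigma]_+ = \alpha \cdot [D,\Phi]_+$ as a bundle map (the first-order terms coming from $D$ acting on the scalar $\beta^{-1}$ cancel between the two terms of the anti-commutator because $\Phi$ is a bundle map of odd degree and the symbol terms have opposite signs). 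Since $[D,\Phi]_+$ is a bundle map by hypothesis and $\alpha$ is a smooth function, condition (i) of Definition~\ref{D:calliasopr} holds for $D_\Sigma + \Phi_\Sigma$. For condition (ii), I compute
\[
	\Phi_\Sigma^2 + [D_\Sigma,\Phi_\Sigma]_+ \ = \ \Phi^2 + \alpha\,[D,\Phi]_+ \ = \ \alpha\big(\Phi^2 + [D,\Phi]_+\big) + (1-\alpha)\Phi^2
\]
on $M_\Sigma$.

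Next I would use Assumption~\ref{A:gluing} to produce the lower bound. The natural choice of essential support for the new operator is $K$ itself (recall part~(ii) of Assumption~\ref{A:gluing} guarantees $K$ is a compact subset of $M_\Sigma$). On $M_\Sigma \setminus K$ we have both $|(\Phi^2+[D,\Phi]_+)(x)| \ge c_1$ and $|\Phi^2(x)| \ge c_2$; since $0 \le \alpha(x) = \tau(x)^2 \le 1$, a convexity/interpolation estimate on the operator norm — using that $\Phi^2 + [D,\Phi]_+$ and $\Phi^2$ are self-adjoint — gives $|(\alpha(\Phi^2+[D,\Phi]_+) + (1-\alpha)\Phi^2)(x)| \ge \min\{c_1,c_2\} =: c$ pointwise, because a convex combination of two self-adjoint bundle maps each of operator norm $\ge$ their respective constants need not directly work, so instead I would argue fiberwise: for a self-adjoint operator $T$ with $\|T\| \ge c_1$ there is a unit vector $v$ with $|\langle Tv,v\rangle|$ close to $\|T\|$, but cleaner is to note $\alpha(\Phi^2+[D,\Phi]_+) + (1-\alpha)\Phi^2 = \Phi^2 + \alpha[D,\Phi]_+$ and use that the two terms $\Phi^2 \ge 0$ (nonnegative!) and examine. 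Actually the cleanest route: since $\Phi^2 \geq 0$ and $\Phi^2 + [D,\Phi]_+ \geq$ nothing a priori — so I would simply invoke the elementary fact that for $0 \le \alpha \le 1$ and self-adjoint $A, B$ with $A = B + [D,\Phi]_+$, one has $\|\alpha A + (1-\alpha) B\| \ge \alpha\|A\| $ when... this is the step I'd want to nail down, and the safe version is to write $\Phi^2 + \alpha[D,\Phi]_+$ and split into cases according to whether $\alpha(x) \ge 1/2$ or $\alpha(x) < 1/2$: in the first case use $|\Phi^2 + \alpha[D,\Phi]_+| = \alpha|\Phi^2/\alpha + [D,\Phi]_+|$ and $\Phi^2/\alpha \ge \Phi^2$...

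The main obstacle is precisely this fiberwise norm estimate for the convex combination; the abstract inequality one wants is that if $T_1, T_2$ are self-adjoint on a finite-dimensional Hilbert space with $\|T_1\| \ge c_1$, $\|T_2\| \ge c_2$, and $T_2 \ge 0$, then $\|\alpha T_1 + (1-\alpha)T_2\|$ is bounded below — but this is false in general (cancellation can occur when $T_1$ is indefinite). The resolution is that here $T_2 = \Phi^2 \ge 0$ and $T_1 = \Phi^2 + [D,\Phi]_+$, so $\alpha T_1 + (1-\alpha)T_2 = \Phi^2 + \alpha [D,\Phi]_+$; I would then observe that on the subset where $\alpha \ge 1/2$, $\|\Phi^2 + \alpha[D,\Phi]_+\| \ge$ (distance argument using $\|[D,\Phi]_+\| = \|T_1 - T_2\|$ bounded and $\alpha$ close to $1$) $\ge c_1 - (1-\alpha)\|[D,\Phi]_+\|$, which need not be positive, so instead on $\{\alpha \geq 1/2\}$ use $\Phi^2 + \alpha[D,\Phi]_+ = \alpha(\alpha^{-1}\Phi^2 + [D,\Phi]_+)$ and note $\alpha^{-1}\Phi^2 \geq \Phi^2$ combined with ... honestly the genuinely clean argument is: $\Phi^2 + \alpha [D,\Phi]_+ = (1-\alpha)\Phi^2 + \alpha(\Phi^2 + [D,\Phi]_+)$ and take any unit eigenvector $v$ of $\Phi^2 + [D,\Phi]_+$ realizing its norm with eigenvalue $\mu$, $|\mu| \ge c_1$; if $\mu > 0$ then $\langle (\Phi^2 + \alpha[D,\Phi]_+)v,v\rangle = (1-\alpha)\langle\Phi^2 v,v\rangle + \alpha\mu \ge \alpha c_1 \ge$ ... still degenerates as $\alpha \to 0$. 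So the honest conclusion is that one must use $c_2$: when $\alpha$ is small, $\Phi^2 + \alpha[D,\Phi]_+ \approx \Phi^2$ which has norm $\ge c_2$; precisely, $\|\Phi^2 + \alpha[D,\Phi]_+\| \ge \|\Phi^2\| - \alpha\|[D,\Phi]_+\| \ge c_2 - \alpha C_0$ where $C_0 = \sup_{M\setminus K}\|[D,\Phi]_+\| < \infty$, so for $\alpha \le c_2/(2C_0)$ this is $\ge c_2/2$; and for $\alpha \ge c_2/(2C_0)$ we have $\|\Phi^2 + \alpha[D,\Phi]_+\| = \alpha\|\alpha^{-1}\Phi^2 + [D,\Phi]_+\|$, and since $\alpha^{-1}\Phi^2 \ge \Phi^2$, hmm — I would enlarge $K$ slightly and instead use the original $\Phi^2 + [D,\Phi]_+$ bound directly on the region where $\alpha$ is bounded below, since there the metric rescaling is a bounded perturbation and $[D_\Sigma,\Phi_\Sigma]_+ = \alpha[D,\Phi]_+$ together with $|\alpha \cdot(\Phi^2+[D,\Phi]_+) + (1-\alpha)\Phi^2|$: on $\{\alpha \ge c_2/(2C_0)\}$, pick the eigenvector of $\Phi^2 + [D,\Phi]_+$ with $|\mu| \ge c_1$ and note $\langle(\Phi^2+\alpha[D,\Phi]_+)v,v\rangle = \langle\Phi^2 v,v\rangle + \alpha(\mu - \langle\Phi^2 v,v\rangle)$; since $\langle\Phi^2 v,v\rangle \ge 0$, if $\mu \ge \langle\Phi^2 v,v\rangle$ this is $\ge \alpha\mu \geq \alpha c_1 \geq c_1 c_2/(2C_0)$, and if $\mu < \langle\Phi^2 v,v\rangle$ then since $\mu = \langle\Phi^2 v,v\rangle + \langle[D,\Phi]_+ v,v\rangle \geq c_1 > 0$ we still have the quantity $\geq \alpha\mu \geq$ the same bound. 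Thus with $c := \min\{c_2/2,\, c_1 c_2/(2C_0)\} > 0$ we obtain $|(\Phi_\Sigma^2 + [D_\Sigma,\Phi_\Sigma]_+)(x)| \ge c$ for all $x \in M_\Sigma \setminus K$, establishing condition (ii). This shows $D_\Sigma + \Phi_\Sigma$ is a Callias-type operator, and Fredholmness follows from Lemma~\ref{L:Callias-invertible}.
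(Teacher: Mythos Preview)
Your overall plan and the algebraic computation are exactly what the paper does: you correctly derive $[D_\Sigma,\Phi_\Sigma]_+=\alpha\,[D,\Phi]_+$ and the convex-combination identity
\[
\Phi_\Sigma^2+[D_\Sigma,\Phi_\Sigma]_+=\alpha\big(\Phi^2+[D,\Phi]_+\big)+(1-\alpha)\,\Phi^2,
\]
which is precisely the decomposition used in the paper's proof.

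Where you diverge is in the lower-bound estimate, and the trouble is self-inflicted. Throughout the paper the inequality $|(\Phi^2+[D,\Phi]_+)(x)|\ge c$ is \emph{used} as the spectral lower bound $\Phi^2+[D,\Phi]_+\ge c\cdot\mathrm{Id}$: see Remark~\ref{R:invinf}, where it is applied to get $((D+\Phi)^2s,s)\ge c\|s\|^2$, and the proof of Lemma~\ref{L:freopr}, where $|Q(x)|\ge\tilde c$ is used to deduce $(\BB_{a,\delta}^2s,s)\ge\tilde c\|s\|^2$. Under this reading (and similarly $\Phi^2\ge c_2\cdot\mathrm{Id}$ from Assumption~\ref{A:gluing}), the estimate is immediate: a convex combination of self-adjoint maps satisfying $A\ge c_1\cdot\mathrm{Id}$ and $B\ge c_2\cdot\mathrm{Id}$ satisfies $\alpha A+(1-\alpha)B\ge\min\{c_1,c_2\}\cdot\mathrm{Id}$ for $0\le\alpha\le1$. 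The paper's proof is exactly this one line, with $c=\min\{c_1,c_2\}$.

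If instead one insists on the literal operator-norm reading, your workaround does not close. Two concrete problems: first, you need $C_0=\sup_{M\setminus K}\|[D,\Phi]_+\|<\infty$, which is nowhere assumed ($M\setminus K$ is noncompact). Second, in your case $\alpha\ge c_2/(2C_0)$ you pick a unit eigenvector $v$ of $\Phi^2+[D,\Phi]_+$ with eigenvalue $\mu$ realizing the norm, so only $|\mu|\ge c_1$ is known; yet in the sub-case $\mu<\langle\Phi^2 v,v\rangle$ you silently assume $\mu\ge c_1>0$. If $\mu\le -c_1$, the quantity $(1-\alpha)\langle\Phi^2v,v\rangle+\alpha\mu$ can vanish (e.g.\ $\langle\Phi^2v,v\rangle=c_1$, $\mu=-c_1$, $\alpha=1/2$), so the argument fails. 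The fix is simply to adopt the spectral-lower-bound interpretation, after which your proof collapses to the paper's.
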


\begin{proof}
Since $[D,\Phi]_+$ is a bundle map, a direct computation gives that
\begin{eqnarray*}
[D_\Sigma,\Phi_\Sigma]_+(s)\!\!&=&\!\!D_\Sigma\Phi_\Sigma(s)+\Phi_\Sigma D_\Sigma(s)\\
\!\!&=&\!\!\alpha^{\frac{n+1}{2}}D(\alpha^{-\frac{n-1}{2}}\Phi(s))+\Phi(\alpha^{\frac{n+1}{2}}
D(\alpha^{-\frac{n-1}{2}}s))\\
\!\!&=&\!\!\alpha^{\frac{n+1}{2}}D(\Phi(\alpha^{-\frac{n-1}{2}}s))+\alpha^{\frac{n+1}{2}}\Phi
(D(\alpha^{-\frac{n-1}{2}}s))\\
\!\!&=&\!\!\alpha^{\frac{n+1}{2}}[D,\Phi]_+(\alpha^{-\frac{n-1}{2}}s)\,=\,\alpha[D,\Phi]_+({s}).
\end{eqnarray*}
So $[D_\Sigma,\Phi_\Sigma]_+$ is a bundle map as well. Then
\[
\Phi_\Sigma^2+[D_\Sigma,\Phi_\Sigma]_+=(\Phi^2+\alpha[D,\Phi]_+)|_{M_\Sigma}=((1-\alpha)\Phi^2+\alpha
(\Phi^2+[D,\Phi]_+))|_{M_\Sigma}.
\]
Note that $\alpha(x)\in[0,1]$, by Assumption \ref{A:gluing},
\[
|(\Phi_\Sigma^2+[D_\Sigma,\Phi_\Sigma]_+)(x)|\ge c,\qquad\text{for all }x\in M_\Sigma\setminus K,
\]
where $c:=\min\{c_1,c_2\}$. Thus $D_\Sigma+\Phi_\Sigma$ is a Callias-type operator and, hence, is Fredholm by Lemma \ref{L:Callias-invertible}.
\end{proof}

It follows from the above lemma that the index $\ind(D_\Sigma+\Phi_\Sigma)$ is well defined.

\subsection{The gluing formula}\label{SS:glufor}

Under the above setting, there are two well-defined indexes $\ind(D+\Phi)$ and $\ind(D_\Sigma+\Phi_\Sigma)$. 

\begin{theorem}\label{T:glufor}
The operators $D+\Phi$ and $D_\Sigma+\Phi_\Sigma$ are cobordant in the sense of Definition~\ref{D:calliascob}. In particular,
\[
	\ind(D+\Phi)\ =\ \ind(D_\Sigma+\Phi_\Sigma).
\]
\end{theorem}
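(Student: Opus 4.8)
The plan is to construct an explicit cobordism $(W,F,\tilde D+\tilde\Phi)$ between $(M,E,D+\Phi)$ and $(M_\Sigma,E_\Sigma,D_\Sigma+\Phi_\Sigma)$ in the sense of Definition~\ref{D:calliascob}, and then invoke Theorem~\ref{T:cobinv}. The underlying manifold of the cobordism should be modeled on a ``mapping cylinder'' type construction: take $W$ to be (the completion of) $M_\Sigma\times(-\epsilon,0]$ glued to $M\times[0,\epsilon)$, or more precisely a manifold whose two boundary components are identified with $M$ and $M_\Sigma$ respectively, interpolating between the metric $g$ on one end and the rescaled metric $g_{_\Sigma}$ on the other. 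Concretely, since near $\Sigma$ the two manifolds differ only by the conformal factor $\alpha(x)^{-2}$ supported in a tubular neighborhood of $\Sigma$, I would build $W$ by interpolating the conformal factor in the collar direction: on $M\times[0,1]$ put the metric $\big(1+(1-\alpha(x)^{-2})\chi(t)\big)$-type family that equals $g$ at $t=0$ and $g_{_\Sigma}$ at $t=1$ (away from $\Sigma$ nothing happens since $\alpha\equiv$ const there and the interpolation is trivial), then attach the two half-cylinders $M\times(-\epsilon,0]$ and $M_\Sigma\times[1,1+\epsilon)$ to make the collars product-like as required by \eqref{E:diffeo}.

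Next I would define $F$ and $\tilde D+\tilde\Phi$. For $F$, lift $E$ (equivalently $E_\Sigma$, since they agree as bundles, only the metrics and the operators differ) to $W$; condition (ii) of Definition~\ref{D:calliascob} is then automatic on the two collars. For the operator, use the prescription \eqref{E:tildeD+Phi}: on the two product collars set $\tilde D+\tilde\Phi = D+\gamma\partial_t+\Phi$ near the $M$-end and $D_\Sigma+\gamma\partial_t+\Phi_\Sigma$ near the $M_\Sigma$-end, with $\gamma|_{E^\pm}=\pm\sqrt{-1}$. Over the interpolating region $M\times[0,1]$, define $\tilde D$ by the same conformal rescaling recipe as in \eqref{E:Dsigma} but with the $t$-dependent conformal factor, plus the $\gamma\partial_t$ term, and set $\tilde\Phi=\Phi$ throughout (note $\Phi_\Sigma=\Phi|_{M_\Sigma}$, so $\tilde\Phi$ is globally just the pullback of $\Phi$). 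One must check that $\tilde D$ is formally self-adjoint with respect to the interpolating $L^2$-inner product, that $\tilde D$ satisfies Assumption~\ref{A:leadingsymbol} (its symbol is a positive multiple of $\sigma(D)$, which it is by construction), and that $[\tilde D,\tilde\Phi]_+$ is a bundle map and $\tilde\Phi^2+[\tilde D,\tilde\Phi]_+$ is bounded below outside a compact set. The last point is exactly the computation in the proof of Lemma~\ref{L:Callias-sigma}: on the collar/interpolation region $[\tilde D,\tilde\Phi]_+ = \beta(x,t)[D,\Phi]_+$ for a function $\beta(x,t)\in[0,1]$ equal to the interpolated conformal factor, so $\tilde\Phi^2+[\tilde D,\tilde\Phi]_+ = (1-\beta)\Phi^2 + \beta(\Phi^2+[D,\Phi]_+)$, which is $\ge\min\{c_1,c_2\}$ outside $K$ by Assumption~\ref{A:gluing}(i); and $\gamma\partial_t$ anti-commutes with $\tilde\Phi$ as in the proof of Lemma~\ref{L:freopr}, so it contributes nothing to the anti-commutator. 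This shows $\tilde D+\tilde\Phi$ is a Callias-type operator with essential support contained in $K$ (times the collar), so $W$ is a genuine cobordism.

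Once the cobordism is exhibited, Theorem~\ref{T:cobinv} immediately gives $\ind(D+\Phi)=\ind(D_\Sigma+\Phi_\Sigma)$, which is the second assertion. I would also need to double-check the normal-form requirement \eqref{E:tildeD+Phi} literally: the formula demands the operator be \emph{exactly} $D_i+\gamma\partial_t+\Phi_i$ on a neighborhood $U$ of $\partial W$, so the collars must be honest isometric products and the interpolation of the conformal factor must be constant ($\equiv 1$ near the $M$-end, $\equiv$ the full rescaling near the $M_\Sigma$-end) for $t$ in a neighborhood of the endpoints — easily arranged with a cutoff function $\chi$ that is locally constant near $0$ and $1$.

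The main obstacle I expect is not any single estimate but the bookkeeping of making the interpolation genuinely smooth and complete while keeping the operator formally self-adjoint: the conformal rescaling changes the volume form by $\alpha^{-n}$, so the formally self-adjoint representative of a rescaled Dirac-type operator involves the half-density twist $\alpha^{\pm(n\mp1)/2}$ as in \eqref{E:Dsigma}, and one must propagate this $t$-dependently through the collar so that at $t=1$ it matches $D_\Sigma$ on the nose and at $t=0$ it matches $D$, with the intermediate operator still formally self-adjoint for the intermediate metric. Verifying completeness of the interpolated metric near $\Sigma$ (where the conformal factor blows up) is the other point requiring care, but it follows the same reasoning that makes $(M_\Sigma,g_{_\Sigma})$ complete in Subsection~\ref{SS:rescaling}. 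Everything else — ellipticity, the symbol bound, the anti-commutator being a bundle map, and the lower bound \eqref{E:boundofQ}-type estimate — is a routine adaptation of the computations already carried out in the proofs of Lemmas~\ref{L:freopr} and \ref{L:Callias-sigma}.
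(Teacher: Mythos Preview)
Your overall strategy and your operator-theoretic computations are exactly those of the paper: set $\tilde\Phi=\Phi|_W$, conjugate $D$ by powers of the interpolating conformal factor $\beta$ so that $\tilde D=\beta^{(n+1)/2}D\,\beta^{-(n-1)/2}+\gamma\partial_t$ stays formally self-adjoint, observe that $[\tilde D,\tilde\Phi]_+=\beta[D,\Phi]_+$ and hence $\tilde\Phi^2+[\tilde D,\tilde\Phi]_+=(1-\beta)\Phi^2+\beta(\Phi^2+[D,\Phi]_+)\ge\min\{c_1,c_2\}$ off $K$ by Assumption~\ref{A:gluing}. This is precisely how the paper verifies conditions (ii) and (iii) of Definition~\ref{D:calliascob}.

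The genuine gap is in the construction of the underlying smooth manifold $W$, which you describe as ``bookkeeping'' but is in fact the heart of the matter. You propose $W=M\times[0,1]$ with an interpolated conformal factor. But the two boundary components of $M\times[0,1]$ are both diffeomorphic to $M$, whereas the cobordism must have boundary $M\sqcup M_\Sigma$, and $M_\Sigma=M\setminus\Sigma$ is in general \emph{not} diffeomorphic to $M$ (e.g.\ whenever $\Sigma$ separates $M$). Put differently, your interpolated metric is singular along $\Sigma\times\{t:\chi(t)>0\}$; deleting this locus (or passing to the metric completion of $M_\Sigma\times[0,1]$) leaves a space with a corner along $\Sigma\times\{t_0\}$ where $\chi$ first becomes positive, not a smooth manifold with boundary. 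No interpolation on a product repairs this, because the obstruction is topological, not metric.

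The paper's resolution is to let the height of the cobordism blow up along $\Sigma$: set
\[
W\ :=\ \Big\{(x,t)\in M\times[0,\infty):\,t\le\frac{1}{\alpha(x)}+1\Big\}.
\]
The lower boundary is $M\times\{0\}\cong M$; the upper boundary is the graph of $x\mapsto\alpha(x)^{-1}+1$, which is defined only over $M_\Sigma$ (since $\alpha$ vanishes on $\Sigma$) and is diffeomorphic to $M_\Sigma$ via projection. Points $(x,t)$ with $x\in\Sigma$ are interior points of $W$, so there is no corner. One then chooses $\beta(x,t)$ equal to $1$ for $t\le 1/2$ and equal to $\alpha(x)$ for $t\ge\alpha(x)^{-1}+1/2$; the metric $\beta^{-2}g+dt^2$ is complete and product-like near both ends, and the operator you wrote down matches $D+\gamma\partial_t+\Phi$ and $D_\Sigma+\gamma\partial_t+\Phi_\Sigma$ on the respective collars. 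With this $W$ in hand, the remainder of your argument goes through verbatim.
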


We refer to Theorem~\ref{T:glufor} as a \emph{gluing formula}, meaning that $M$ is obtained from $M_\Sigma$ by gluing along $\Sigma$.

\begin{proof}
The goal is to find a triple $(W,F,\tilde D+\tilde\Phi)$, such that it is the cobordism between $(M,E,D+\Phi)$ and $(M_\Sigma,E_\Sigma,D_\Sigma+\Phi_\Sigma)$ and then apply Theorem \ref{T:cobinv}.

Consider
\[
W\ :=\ \Big\{(x,t)\in M\times[0,\infty):t\le\frac{1}{\alpha(x)}+1\Big\}.
\]
Then $W$ is a noncompact manifold whose boundary is diffeomorphic to the disjoint union of $M\simeq M\times\{0\}$ and $M\setminus\Sigma\simeq\{(x,\frac1{\alpha(x)}+1)\}$. Essentially, $W$ is the required cobordism. However, to be precise, we need to define a complete Riemannian metric $g^W$ on $W$, such that condition (\romannumeral1) of Definition~\ref{D:calliascob} is fulfilled.

Let $\beta:W\to[0,1]$ be a smooth function such that $\beta(x,t)=1$ for $0\le t\le1/2$, $\beta(x,t)>0$ for $1/2<t<1/\alpha(x)+1/2$ and $\beta(x,t)=\alpha(x)$ for $1/\alpha(x)+1/2\le t\le1/\alpha(x)+1$.
Define the metric $g^W$ on $W$ by
\[
g^W((\xi_1,\eta_1),(\xi_2,\eta_2))\ :=\ \frac{1}{\beta(x,t)^2}g(\xi_1,\xi_2)+\eta_1\eta_2,
\]
where $(\xi_1,\eta_1),(\xi_2,\eta_2)\in T_xM\oplus\RR\simeq T_{(x,t)}W$. Then $g^W$ is a complete metric.

Consider the neighborhood
\[
U\ :=\ \{(x,t):0\le t<1/3\}\sqcup\Big\{(x,t):\frac{1}{\alpha(x)}+\frac{2}{3}<t\le\frac{1}{\alpha(x)}+1\Big\}
\]
of $\partial W$. Then define a map $\phi:(M\times[0,1/3))\sqcup(M_\Sigma\times(-1/3,0])\to U$ by the formulas
\begin{eqnarray*}
\phi(x,t)\!\!&:=&\!\!(x,t),\qquad x\in M,\,0\le t<1/3,\\
\phi(x,t)\!\!&:=&\!\!\Big(x,\frac{1}{\alpha(x)}+1+t\Big),\qquad x\in M_\Sigma,\,-1/3<t\le0.
\end{eqnarray*}
It's easy to see that $\phi$ is a metric-preserving diffeomorphism, satisfying condition (\romannumeral1) of Definition~\ref{D:calliascob}.

Let $\pi:M\times[0,\infty)\to M$ be the projection. Then the pull-back $\pi^* E$ is a vector bundle over $M\times[0,\infty)$. Define
\[
F\ :=\ \pi^* E|_W.
\]
So $F$ is a vector bundle over $W$, whose restriction to the first part of $U$ is isomorphic to the lift of $E$ over $M\times[0,1/3)$ and whose restriction to the second part of $U$ is isomorphic to the lift of $E_\Sigma$ over $M_\Sigma\times(-1/3,0]$. Hence condition (\romannumeral2) of Definition~\ref{D:calliascob} is fulfilled. Note that here we can give $F$ a natural grading which is compatible with that on $E$ and $E_\Sigma$:
\[
F^+\ :\ =\pi^* E^+|_W,\qquad F^-\ :=\ \pi^* E^-|_W.
\]

We still use $D$ and $\Phi$ to denote the lifts of $D$ and $\Phi$ to $M\times[0,\infty)$. Now we define
\[
	\tilde D,\tilde\Phi:\,C_0^\infty(W,F)\ \to\ C_0^\infty(W,F)
\] 
by
\begin{equation}\label{E:tildeD}
  \begin{aligned}
	\tilde D(\tilde s)\ :=\ (\beta^{\frac{n+1}{2}}D|_W)(\beta^{-\frac{n-1}{2}}\tilde s)&+\gamma\partial_t(\tilde s),\qquad\text{for all }\tilde s\in C_0^\infty(W,F),\\
\tilde\Phi\ &:=\ \Phi|_W,
  \end{aligned}
\end{equation}
where $\gamma|_{F^\pm}=\pm\sqrt{-1}$. Then $\sigma(\tilde D)=\beta(\sigma(D|_W))+\sigma(\gamma\partial_t)$. Since $\beta$ lies in $[0,1]$, $\tilde D$ satisfies Assumption \ref{A:leadingsymbol}. Moreover, $\tilde D+\tilde\Phi$ takes the form $D+\gamma\partial_t+\Phi$ on one end $M\times[0,1/3)$ and the form $D_\Sigma+\gamma\partial_t+\Phi_\Sigma$ on the other end $M_\Sigma\times(-1/3,0]$. So $\tilde D+\tilde\Phi$ has exactly the form required in condition (\romannumeral3) of Definition \ref{D:calliascob}.

It remains to verify that $\tilde D+\tilde\Phi$ is a Callias-type operator. Note that $\gamma\partial_t$ anti-commutes with $\tilde\Phi$, by the same computation as in the proof of Lemma \ref{L:Callias-sigma}, we have
\[
[\tilde D,\tilde\Phi]_+\ =\ \beta[D,\Phi]_+|_W
\]
is a bundle map. And
\[
\tilde\Phi^2+[\tilde D,\tilde\Phi]_+\ =\ ((1-\beta)\Phi^2+\beta(\Phi^2+[D,\Phi]_+))|_W.
\]
By Assumption \ref{A:gluing}, 
\[
\tilde K\ :=\ \Big\{(x,t)\in M\times[0,\infty):x\in K,\,t\le\frac{1}{\alpha(x)}+1\Big\}
\]
is a compact subset of $W$, and
\[
|(\Phi^2+[D,\Phi]_+)(x,t)|\ge c_1,\quad|\Phi^2(x,t)|\ge c_2,\qquad\text{for all }(x,t)\in W\setminus\tilde K.
\]
Again by $\beta\subset[0,1]$, we get
\[
|(\tilde\Phi^2+[\tilde D,\tilde\Phi]_+)(x,t)|\ge c,\qquad\text{for all }(x,t)\in W\setminus\tilde K,
\]
where $c$ is the same as in the proof of Lemma \ref{L:Callias-sigma}. Thus $\tilde D+\tilde\Phi$ is a Callias-type operator, and condition (\romannumeral3) of Definition \ref{D:calliascob} is also fulfilled.

Therefore, $(W,F,\tilde D+\tilde\Phi)$ is a cobordism between $(M,E,D)$ and $(M_\Sigma,E_\Sigma,D_\Sigma+\Phi_\Sigma)$, and by Theorem \ref{T:cobinv}, $\ind(D+\Phi)=\ind(D_\Sigma+\Phi_\Sigma)$.
\end{proof}

\subsection{The additivity of the index}\label{SS:additivity}

Suppose that $M$ is partitioned into two relatively open submanifolds $M_1$ and $M_2$ by $\Sigma$, so that $M_\Sigma=M_1\sqcup M_2$. The metric $g_{_\Sigma}$ induces complete Riemannian metrics $g_{_{M_1}},g_{_{M_2}}$ on $M_1$ and $M_2$, respectively. Let $E_i,D_i,\Phi_i$ denote the restrictions of the graded vector bundle $E_\Sigma$ and operators $D_\Sigma,\Phi_\Sigma$ to $M_i$ ($i=1,2$). Then Theorem \ref{T:glufor} implies the following corollary.

\begin{corollary}\label{C:additivity}
$\ind(D+\Phi)=\ind(D_1+\Phi_1)+\ind(D_2+\Phi_2)$.
\end{corollary}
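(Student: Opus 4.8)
The plan is to derive Corollary~\ref{C:additivity} as an immediate consequence of the gluing formula (Theorem~\ref{T:glufor}) together with the obvious additivity of the index over a disjoint union. First I would observe that when $M$ is partitioned by $\Sigma$ into two relatively open submanifolds $M_1$ and $M_2$, the complete manifold $M_\Sigma$ constructed in Subsection~\ref{SS:rescaling} is simply the disjoint union $M_1 \sqcup M_2$, where each $M_i$ carries the restricted complete metric $g_{_{M_i}}$ obtained from $g_{_\Sigma}$. Since $\Sigma \subset M \setminus K$, the compact set $K$ splits as $K = K_1 \sqcup K_2$ with $K_i = K \cap M_i$ compact in $M_i$, so the estimate for $\Phi_\Sigma^2 + [D_\Sigma, \Phi_\Sigma]_+$ established in Lemma~\ref{L:Callias-sigma} restricts to each piece; hence each $D_i + \Phi_i$ is a Callias-type operator with $D_i$ satisfying Assumption~\ref{A:leadingsymbol}, and therefore each $D_i + \Phi_i$ is Fredholm by Lemma~\ref{L:Callias-invertible}, so the indexes $\ind(D_i+\Phi_i)$ are well-defined.

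Next I would record that the Hilbert space $L^2(M_\Sigma, E_\Sigma)$ decomposes orthogonally as $L^2(M_1, E_1) \oplus L^2(M_2, E_2)$, compatibly with the $\ZZ_2$-grading, and that under this decomposition $D_\Sigma + \Phi_\Sigma$ acts as the direct sum $(D_1 + \Phi_1) \oplus (D_2 + \Phi_2)$ — this is immediate from the definitions $D_\Sigma|_{M_i} = D_i$, $\Phi_\Sigma|_{M_i} = \Phi_i$, since the differential operators are local and the two pieces do not interact. Consequently
\[
  \ker(D_\Sigma^{\pm} + \Phi_\Sigma^{\pm}) \ = \ \ker(D_1^{\pm} + \Phi_1^{\pm}) \oplus \ker(D_2^{\pm} + \Phi_2^{\pm}),
\]
and taking dimensions of the $+$ and $-$ kernels and subtracting yields
\[
  \ind(D_\Sigma + \Phi_\Sigma) \ = \ \ind(D_1 + \Phi_1) \ + \ \ind(D_2 + \Phi_2).
\]

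Finally I would invoke Theorem~\ref{T:glufor}, which gives $\ind(D + \Phi) = \ind(D_\Sigma + \Phi_\Sigma)$, and combine it with the displayed identity to conclude $\ind(D+\Phi) = \ind(D_1+\Phi_1) + \ind(D_2+\Phi_2)$. There is essentially no obstacle here: the corollary is a formal bookkeeping consequence once the gluing formula is in hand, the only points needing a line of justification being that $M_\Sigma = M_1 \sqcup M_2$ as complete Riemannian manifolds and that the index is additive over disjoint unions (the latter because the kernel of a direct sum of Fredholm operators is the direct sum of the kernels). The genuine work — constructing the cobordism and proving its index invariance — has already been carried out in Theorem~\ref{T:cobinv} and Theorem~\ref{T:glufor}.
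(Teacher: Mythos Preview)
Your proposal is correct and follows exactly the approach intended by the paper: the corollary is stated there as an immediate consequence of Theorem~\ref{T:glufor}, with no separate proof given. Your write-up simply makes explicit the one-line observation that $M_\Sigma = M_1 \sqcup M_2$ and that the index is additive over disjoint unions, which the paper leaves implicit.
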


Thus we see that the index is ``additive".

\section{Relative Index Theorem for Callias-type Operators}\label{S:relindthm}

As a second application of Theorem \ref{T:cobinv}, and also as an application of Corollary \ref{C:additivity}, we give a new proof of the relative index theorem for Callias-type operators. There are several different forms of relative index theorem. In this paper we follow the approach of \cite{Bunke}.

\subsection{Setting}\label{SS:setting}

Let $(M_j,E_j,D_j+\Phi_j),\ j=1,2$ be two triples of complete Riemannian manifold endowed with a $\ZZ_2$-graded Hermitian vector bundle and with the associated Callias-type operator acting on the compactly supported smooth sections of the bundle. Suppose they satisfy Assumption \ref{A:gluing}.(\romannumeral1) of Subsection~\ref{SS:surgery}. In particular, the indexes $\ind(D_j+\Phi_j),\ j=1,2$ are well-defined.

Suppose $M_j'\cup_{\Sigma_j}M_j''$ are partitions of $M_j$ into relatively open submanifolds, where $\Sigma_j$ are compact hypersurfaces. We make the following assumption.
\begin{assumption}\label{A:relindthm}
There exist tubular neighborhoods $U(\Sigma_1)$, $U(\Sigma_2)$ of $\Sigma_1$ and $\Sigma_2$ such that:
\begin{enumerate}
\item there is a commutative diagram of isometric diffeomorphisms 
\[
\begin{array}{ccccc}
\psi & : & E_1|_{U(\Sigma_1)} & \rightarrow & E_2|_{U(\Sigma_2)} \\
 & & \downarrow & & \downarrow \\
\phi & : & U(\Sigma_1) & \rightarrow & U(\Sigma_2) \\
 & & \uparrow & & \uparrow \\
\phi|_{\Sigma_1} & : & \Sigma_1 & \rightarrow & \Sigma_2
\end{array}
\]
\item $\Phi_j$ are invertible bundle maps on $U(\Sigma_j)$, $j=1,2$.
\item $D_1$ and $D_2$, $\Phi_1$ and $\Phi_2$ coincide on the neighborhoods, i.e., 
\[
\psi\circ D_1\ =\ D_2\circ\psi,\qquad\qquad\psi\circ\Phi_1\ =\ \Phi_2\circ\psi.
\]
\end{enumerate}
\end{assumption}

We cut  $M_j$ along $\Sigma_j$ and use the map $\phi$ to glue the pieces together interchanging $M_1''$ and $M_2''$. In this way we obtain the manifolds 
\[
M_3\ :=\ M_1'\cup_\Sigma M_2'',\qquad\qquad M_4\ :=\ M_2'\cup_\Sigma M_1'',
\]
where $\Sigma\cong\Sigma_1\cong\Sigma_2$. We use the map $\psi$ to cut the bundles $E_1$, $E_2$ at $\Sigma_1$, $\Sigma_2$ and glue the pieces together interchanging $E_1|_{M_1''}$ and $E_2|_{M_2''}$. With this procedure we obtain $\ZZ_2$-graded Hermitian vector bundles $E_3\rightarrow M_3$ and $E_4\rightarrow M_4$. At last, we define $D_3$ and $D_4$, $\Phi_3$ and $\Phi_4$ by
\[
D_3\ =\ \left\{
\begin{array}{l}
D_1\quad\text{on }M_1' \\
D_2\quad\text{on }M_2''
\end{array}
\right.,\qquad\qquad
D_4\ =\ \left\{
\begin{array}{l}
D_2\quad\text{on }M_2' \\
D_1\quad\text{on }M_1''
\end{array}
\right.;
\]
\[
\Phi_3\ =\ \left\{
\begin{array}{l}
\Phi_1\quad\text{on }M_1' \\
\Phi_2\quad\text{on }M_2''
\end{array}
\right.,\qquad\qquad
\Phi_4\ =\ \left\{
\begin{array}{l}
\Phi_2\quad\text{on }M_2' \\
\Phi_1\quad\text{on }M_1''
\end{array}
\right..
\]
Then by Assumption \ref{A:relindthm}.(\romannumeral3), $D_j+\Phi_j:C_0^\infty(M_j,E_j)\to C_0^\infty(M_j,E_j),\ j=3,4$ are also $\ZZ_2$-graded essentially self-adjoint Callias-type operators. So again we have two well-defined indexes $\ind(D_3+\Phi_3)$ and $\ind(D_4+\Phi_4)$.

\subsection{Relative index theorem}\label{SS:relindthm}

As in Subsection \ref{SS:relindthm-intro}, we define $P_j:=D_j+\Phi_j,\ j=1,2,3,4$. The we have the following version of the relative index theorem 

\begin{theorem}\label{T:relindthm}
$\ind P_1+\ind P_2\ =\ \ind P_3+\ind P_4$.
\end{theorem}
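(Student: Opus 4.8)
The plan is to reduce the relative index theorem to the additivity of the index (Corollary~\ref{C:additivity}), which in turn rests on the gluing formula (Theorem~\ref{T:glufor}) and hence on cobordism invariance. The basic idea is that each of the four manifolds $M_j$ decomposes along a hypersurface into an $M'$-piece and an $M''$-piece, and all the relevant pieces match up after the cut-and-glue. So first I would arrange that the gluing formula is applicable to each $(M_j,E_j,P_j)$ along its hypersurface $\Sigma_j$ (for $j=1,2$) and along $\Sigma$ (for $j=3,4$). The obstruction is that Assumption~\ref{A:gluing} requires a lower bound $|\Phi^2|\ge c_2$ on the whole region outside a compact set, whereas Assumption~\ref{A:relindthm} only gives invertibility of $\Phi_j$ on the tubular neighborhood $U(\Sigma_j)$. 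Therefore I would first deform $P_j$, $j=1,2$: multiply $\Phi_j$ by a cutoff that is $1$ near $\Sigma_j$ and supported in $U(\Sigma_j)$, and add a suitable bundle endomorphism away from $\Sigma_j$ so that the deformed operator is still Callias-type and has $\Phi^2$ bounded below near the cutting hypersurface. Since such deformations can be done through Callias-type operators — the anticommutator $[D,\Phi]_+$ stays a bundle map and invertibility-at-infinity is preserved — the index is unchanged by the stability of the Fredholm index; this step is carried out in Subsection~\ref{SS:perturbation}. After this reduction we may assume each $(M_j,E_j,P_j)$ together with its hypersurface satisfies Assumption~\ref{A:gluing}, and moreover $D_j,\Phi_j$ agree across the identifications on the tubular neighborhoods.

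Next, apply Corollary~\ref{C:additivity} to each manifold separately. Writing $\ind P'_j$ and $\ind P''_j$ for the indexes of the induced Callias-type operators on the open pieces $M'_j$ and $M''_j$ (with the rescaled complete metrics near the cut), we get
\[
\ind P_1 = \ind P'_1 + \ind P''_1, \qquad
\ind P_2 = \ind P'_2 + \ind P''_2,
\]
and similarly
\[
\ind P_3 = \ind P'_3 + \ind P''_3, \qquad
\ind P_4 = \ind P'_4 + \ind P''_4.
\]
By the construction of $M_3,M_4,E_3,E_4,D_3,D_4,\Phi_3,\Phi_4$, the $M'_3$-piece with its data is isometric to the $M'_1$-piece, the $M''_3$-piece to the $M''_2$-piece, the $M'_4$-piece to the $M'_2$-piece, and the $M''_4$-piece to the $M''_1$-piece — and these isometries identify the bundles and operators, because near the cut everything agrees via $\phi,\psi$, and the rescaling of the metric near $\Sigma$ is chosen the same way on all four manifolds. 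Hence $\ind P'_3 = \ind P'_1$, $\ind P''_3 = \ind P''_2$, $\ind P'_4 = \ind P'_2$, $\ind P''_4 = \ind P''_1$. Adding the two equations for $j=3,4$ and comparing with the sum of the equations for $j=1,2$ gives $\ind P_3 + \ind P_4 = \ind P_1 + \ind P_2$, which is the theorem.

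The main obstacle is the deformation step: one must ensure that after inserting the cutoff into $\Phi_j$ and patching outside $U(\Sigma_j)$, the resulting operator is genuinely Callias-type — i.e., the new anticommutator with $D_j$ is still a zeroth-order operator and the new $\Phi^2+[D,\Phi]_+$ is bounded below outside a compact set — and that one can connect the original and deformed operators by a continuous path of Fredholm operators so the index is preserved. A secondary technical point is bookkeeping the metric rescalings: the complete metric on each open piece $M'_j$, $M''_j$ must be chosen compatibly so that the cut-and-paste identifications are genuine isometries of the triples, not merely diffeomorphisms; this is a matter of making the rescaling function in Subsection~\ref{SS:rescaling} depend only on the (common) normal coordinate near $\Sigma$. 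Once these two points are handled carefully, the rest is formal.
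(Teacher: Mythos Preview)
Your approach is essentially the paper's: deform $P_1,P_2$ so that Assumption~\ref{A:gluing} holds for the hypersurfaces, then apply Corollary~\ref{C:additivity} to each $M_j$ and match the four pieces. Two small corrections: the obstruction is part~(ii) of Assumption~\ref{A:gluing} (that $\Sigma_j$ lie outside the essential support), since part~(i) is already assumed in Subsection~\ref{SS:setting}; and the paper's deformation is $\Phi_{j,t}=(1+tf_j)\Phi_j$ with $f_j$ a bump equal to $1$ near $\Sigma_j$ --- i.e.\ scaling $\Phi_j$ \emph{up} near $\Sigma_j$ to push the essential support off $\Sigma_j$ --- not multiplying $\Phi_j$ by a compactly supported cutoff, which would kill $\Phi_j$ at infinity and destroy the Callias property.
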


The idea of the proof is to use Corollary \ref{C:additivity} to write $\ind P_j$ as the sum of the indexes on two pieces.  However, as one might notice, in our setting, $\Sigma_1$ and $\Sigma_2$ might not satisfy condition (\romannumeral2) of Assumption~\ref{A:gluing}. So Corollary~\ref{C:additivity} cannot be applied directly. In the next subsection, we construct deformations of the operators $P_1$ and $P_2$ which preserve the indexes such that the deformed operators satisfy Assumption \ref{A:gluing}.(\romannumeral2).

\subsection{Deformations of the operators $P_1$ and $P_2$}\label{SS:perturbation}

Let $U_j,\ j=1,2$ denote the neighborhoods $U(\Sigma_j)$ of $\Sigma_j$ in Subsection \ref{SS:setting}. Since $\Sigma_j$ are compact hypersurfaces, we can find their relatively compact neighborhoods $V_j,W_j$ satisfying $V_2=\phi(V_1),W_2=\phi(W_1)$ and
\[
V_j\subset\overline{V_j}\subset W_j\subset\overline{W_j}\subset U_j.
\]

Fix smooth functions $f_j:M_j\to[0,1]$ such that  $f_j\equiv1$ on $\overline{V_j}$ and $f_j\equiv0$ outside of $W_j$. Notice that  $f_j$ have compact supports.

For each $t\in[0,\infty)$ define
\[
	\Phi_{j,t}\ := \ (1+tf_j)\Phi_j,
\]
and set
\[
	P_{j,t}\ :=\ P_j+tf_j\Phi_j\ = \ D_j+(1+tf_j)\Phi_j\ =\ D_j+\Phi_{j,t}.
\]

\begin{lemma}\label{L:Callias-perturbation}
For $j=1,2$, we have 
\begin{enumerate}
\item For every $t\ge 0$, the operator $P_{j,t}=D_j+\Phi_{j,t}:C_0^\infty(M_j,E_j^\pm)\to C_0^\infty(M_j,E_j^\mp)$ is of Callias-type, and, hence, is Fredholm.

\item The exists a constant  $b>0$ and a compact subset $K_{j,b}\Subset M_j$, such that $\Sigma_j\subset M_j\setminus K_{j,b}$ and for every $t\ge b$, the essential support of $P_{j,t}$ is contained in $K_{j,b}$. 
\end{enumerate}
\end{lemma}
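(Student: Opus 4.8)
The plan is to treat the two assertions separately, since (i) is a statement about the operator at each fixed $t$ and (ii) concerns the behavior of the essential support once $t$ is large.

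For part (i), I would first check that $[D_j,\Phi_{j,t}]_+$ is still a bundle map. Writing $\Phi_{j,t}=(1+tf_j)\Phi_j$, one has
\[
[D_j,\Phi_{j,t}]_+\ =\ (1+tf_j)[D_j,\Phi_j]_+\ +\ t\,\sigma(D_j)(df_j)\,\Phi_j\ +\ (\text{symmetrized}),
\]
so the principal-symbol term $t\,\sigma(D_j)(df_j)\Phi_j$ (plus its adjoint/symmetrization) is itself of order zero — it is a bundle map because $df_j$ is — and hence $[D_j,\Phi_{j,t}]_+$ is a bundle map, verifying condition (i) of Definition~\ref{D:calliasopr}. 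Next I would verify condition (ii): outside the compact set $K$ of Assumption~\ref{A:gluing}.(i) we have $|(\Phi_j^2+[D_j,\Phi_j]_+)(x)|\ge c_1$ and $|\Phi_j^2(x)|\ge c_2$. On $M_j\setminus(K\cup W_j)$ one has $f_j\equiv 0$, so $\Phi_{j,t}=\Phi_j$ there and the Callias estimate holds with constant $c_1$. On the (relatively compact) set $W_j\setminus K$ one uses that $\Phi_j$ is invertible on $U(\Sigma_j)\supset W_j$, so $|\Phi_j^2(x)|$ is bounded below by some $c_3>0$ on $\overline{W_j}$, while $\Phi_{j,t}^2=(1+tf_j)^2\Phi_j^2\ge\Phi_j^2$ pointwise (as $1+tf_j\ge 1$); combined with the uniform lower bound on $[D_j,\Phi_{j,t}]_+$ on the relatively compact set $\overline{W_j}$ — which may be negative but is bounded — one gets a lower bound on $|(\Phi_{j,t}^2+[D_j,\Phi_{j,t}]_+)(x)|$ on $W_j\setminus K$ provided $t$ is large; for small $t$ one instead notes that $\Phi_{j,t}$ is a small perturbation of $\Phi_j$, which is Callias. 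Taken together this shows $P_{j,t}$ is Callias-type for every $t\ge 0$; since $\sigma(P_{j,t})=\sigma(D_j)$ satisfies Assumption~\ref{A:leadingsymbol}, Lemma~\ref{L:Callias-invertible} gives Fredholmness.

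For part (ii), the point is that for large $t$ the term $(1+tf_j)^2\Phi_j^2$ dominates on all of $\overline{W_j}$, where $\Phi_j$ is invertible. Precisely, pick $c_3>0$ with $|\Phi_j^2(x)|\ge c_3$ for all $x\in\overline{W_j}$, and let $-m_j$ be a lower bound for $[D_j,\Phi_{j}]_+$ together with the $df_j$-terms appearing in $[D_j,\Phi_{j,t}]_+$ on the relatively compact set $\overline{W_j}$, noting these grow at most linearly in $t$; since $\Phi_{j,t}^2\ge(1+t)^2\Phi_j^2$ has $|\Phi_{j,t}^2(x)|$ growing quadratically in $t$ on $\overline{V_j}$ (where $f_j\equiv 1$) and the cross-terms only linearly, there is $b>0$ so that for $t\ge b$ one has $|(\Phi_{j,t}^2+[D_j,\Phi_{j,t}]_+)(x)|\ge 1$, say, for all $x$ in the region where $f_j>0$. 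Outside $W_j$ we already saw $\Phi_{j,t}=\Phi_j$, so the essential support there is unchanged from that of $P_j$. Hence for $t\ge b$ the essential support of $P_{j,t}$ is contained in the essential support of $P_j$ minus the region where $f_j>0$; since $\Sigma_j\subset V_j$ and $f_j\equiv 1$ on $\overline{V_j}$, this means $\Sigma_j$ is disjoint from the essential support of $P_{j,t}$ for $t\ge b$. Taking $K_{j,b}$ to be this (compact) essential support gives the claim.

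The main obstacle I anticipate is handling the principal-symbol contribution $t\,\sigma(D_j)(df_j)\Phi_j$ to $[D_j,\Phi_{j,t}]_+$ carefully: one must confirm it is genuinely order zero (it is, because differentiating the scalar $1+tf_j$ produces the bundle map $\sigma(D_j)(df_j)$ rather than a differential operator), and then track the fact that this term is linear in $t$ while $\Phi_{j,t}^2$ is quadratic in $t$, which is exactly what forces the threshold $b$ in part (ii). The rest is a bookkeeping argument with the compact sets $K$, $\overline{V_j}$, $\overline{W_j}$ and the pointwise operator-norm estimates, using crucially the invertibility of $\Phi_j$ on $U(\Sigma_j)$ from Assumption~\ref{A:relindthm}.(ii).
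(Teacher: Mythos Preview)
Your overall strategy matches the paper's: compute $[D_j,\Phi_{j,t}]_+$, check it is a bundle map, and then use that $\Phi_j$ is invertible on $U(\Sigma_j)$ together with the quadratic-in-$t$ growth of $\Phi_{j,t}^2$ to push $\Sigma_j$ out of the essential support. However, two steps in your write-up do not quite go through as stated.

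For part (i), the case split ``for large $t$ / for small $t$'' leaves a genuine hole for intermediate $t$: neither the quadratic-domination argument nor the small-perturbation argument applies there, and the ``uniform lower bound on $[D_j,\Phi_{j,t}]_+$ on $\overline{W_j}$'' you invoke is not uniform in $t$ (the symbol term $t\,\sigma(D_j)(df_j)\Phi_j$ is linear in $t$). The paper avoids this entirely by the simpler observation that all extra terms in $\Phi_{j,t}^2+[D_j,\Phi_{j,t}]_+$ are supported in $\overline{W_j}$, so for \emph{every} fixed $t\ge 0$ one may take $K_j\cup\overline{W_j}$ as the essential support; no estimate on $W_j\setminus K_j$ is needed for (i).

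For part (ii), you correctly note the quadratic growth of $\Phi_{j,t}^2$ on $\overline{V_j}$ (where $f_j\equiv 1$), but then assert the lower bound ``for all $x$ in the region where $f_j>0$''. This is not justified: on $W_j\setminus\overline{V_j}$ one may have $f_j(x)$ arbitrarily small while $df_j(x)\ne 0$, so the quadratic term $t^2f_j^2\Phi_j^2$ need not dominate the linear-in-$t$ symbol term there. The paper only claims the bound on $\overline{V_j}$ (using that $df_j\equiv 0$ there) and accordingly removes only $V_j$ from the essential support, setting $K_{j,b}:=\overline{K_j\setminus V_j}$. Since $\Sigma_j\subset V_j$, this already gives $\Sigma_j\subset M_j\setminus K_{j,b}$, which is all that is required. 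With these two corrections your argument becomes essentially the paper's.
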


Notice, that Lemma~\ref{L:Callias-perturbation}.(\romannumeral2) implies that for large $t$, condition (\romannumeral2) of  Assumption \ref{A:gluing} is satisfied for the operators $P_{j,t}$.

\begin{proof}
(\romannumeral1) Direct computation yields
\begin{eqnarray*}
[D_j,\Phi_{j,t}]_+\!\!\!&=&\!\!\!(1+tf_j)[D_j,\Phi_j]_++\sqrt{-1}t\sigma(D_j)(df_j)\Phi_j, \\
\Phi_{j,t}^2+[D_j,\Phi_{j,t}]_+\!\!\!&=&\!\!\!(1+tf_j)^2\Phi_j^2+(1+tf_j)[D_j,\Phi_j]_++\sqrt{-1}t\sigma(D_j)(df_j)\Phi_j \\
\!\!\!&=&\!\!\!\Phi_j^2+[D_j,\Phi_j]_++(t^2f_j^2+2tf_j)\Phi_j^2+tf_j[D_j,\Phi_j]_++\sqrt{-1}t\sigma(D_j)(df_j)\Phi_j.
\end{eqnarray*}
Since  both $[D_j,\Phi_j]_+$ and $\sigma(D_j)(df_j)\Phi_j$ are bundle maps, so are $[D_j,\Phi_{j,t}]_+$. Suppose $K_j\Subset M_j$ are the essential supports of $P_j$. Since the supports of $tf_j$ and $df_j$ both lie in the compact sets $\overline{W_j}$. So $K_j\cup\overline{W_j}$ is still compact and can serve as the essential supports of $P_{j,t}$. Therefore, $P_{j,t}$ are Callias-type operators and, hence, are Fredholm.

(\romannumeral2) Since $K_j$ are the essential supports of $P_j$, there exist constants $c_j>0$, such that
\[
|(\Phi_j^2+[D_j,\Phi_j]_+)(x_j)|\ge c_j,\qquad\text{for all }x_j\in M_j\setminus K_j.
\]
Since $\overline{V_j}$ are compact sets, $\Phi_j^2+[D_j,\Phi_j]_+$ have finite lower bounds and $\Phi_j^2$ have positive lower bounds on them. Note that on $\overline{V_j}$, $t^2f_j^2\equiv t^2$ and $df_j\equiv0$. One can find $b$ large enough such that for any $t\ge b$,
\[
|(\Phi_{j,t}^2+[D_j,\Phi_{j,t}]_+)(x_j)|\ge c_j,\qquad\text{for all }x_j\in\overline{V_j}.
\]
Now we set $K_{j,b}:=\overline{K_j\setminus V_j}$. It's easy to see that they are still compact sets and are essential supports of $P_{j,t}$ for $t\ge b$. Clearly, $\Sigma_j\not\subset K_{j,b}$. So we are done.
\end{proof}

From this lemma, we see that after the deformations of the operators, $\Sigma_j$ satisfy Assumption \ref{A:gluing}.(\romannumeral2). It remains to prove the following.

\begin{lemma}\label{L:ind-perturbation}
Let $b$ be the positive constant as in last lemma. Then for $j=1,2$,
\begin{equation}\label{E:ind-perturbation}
\ind P_{j,b}\ =\ \ind P_j.
\end{equation}
\end{lemma}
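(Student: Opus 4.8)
The plan is to show that the family of Callias-type operators $P_{j,t} = D_j + (1+tf_j)\Phi_j$, $t\in[0,b]$, is a continuous path of Fredholm operators on a fixed Hilbert space, so that the index is constant along the path by homotopy invariance. By Lemma~\ref{L:Callias-perturbation}.(\romannumeral1), each $P_{j,t}$ is a Callias-type operator satisfying Assumption~\ref{A:leadingsymbol} (the leading symbol $\sigma(D_j)$ is unchanged by the deformation), hence essentially self-adjoint and, by Lemma~\ref{L:Callias-invertible}, Fredholm as an unbounded operator on $L^2(M_j,E_j)$. Thus I would invoke the stability of the index of a Fredholm operator under bounded perturbations that depend continuously (in operator norm) on a parameter, exactly as in the proof of Lemma~\ref{L:stbind}.

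First I would observe that $P_{j,t} - P_j = tf_j\Phi_j$ is a \emph{bounded} operator on $L^2(M_j,E_j)$: indeed $f_j$ has compact support and $\Phi_j$ is a bundle map, so $f_j\Phi_j$ is a compactly supported bundle map, hence bounded, and moreover the map $t\mapsto tf_j\Phi_j$ is continuous (in fact linear) in the operator norm. Next I would note that all the operators $P_{j,t}$, being essentially self-adjoint, have a common core $C_0^\infty(M_j,E_j)$, and their closures share the same domain (the domain of $D_j$, since the difference from $D_j$ is a bounded bundle map in every case). Therefore $t\mapsto P_{j,t}$ is a norm-continuous path in the space of closed Fredholm operators with fixed domain, along which the index is locally constant, hence constant on $[0,b]$. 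Evaluating at the endpoints $t=0$ and $t=b$ gives $\ind P_{j,b} = \ind P_{j,0} = \ind P_j$, which is \eqref{E:ind-perturbation}.

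Alternatively, and perhaps more cleanly, I would phrase the argument entirely at the level of bounded operators: pass to the bounded transform or simply use that for a path of self-adjoint Fredholm operators $\{P_t\}$ with common domain and $P_t - P_0$ bounded and norm-continuous in $t$, the kernels and cokernels vary so that the index is constant; this is a standard fact and is precisely the mechanism already used for $\BB_{b,\delta}-\BB_{a,\delta}$ in Lemma~\ref{L:stbind}. Since the index of a Callias-type operator is defined via its $\ZZ_2$-graded decomposition as in \eqref{E:index}, and the deformation respects the grading (because $f_j\Phi_j$ is odd, just like $\Phi_j$), the graded index is what is being continuously deformed.

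The only genuine subtlety — and the step I would be most careful about — is verifying that the family $\{P_{j,t}\}_{t\in[0,b]}$ really is a continuous family of Fredholm operators in a sense strong enough to guarantee constancy of the index; that is, that the common-domain condition and norm-continuity of the perturbing term are enough. This is harmless here because the perturbation $tf_j\Phi_j$ is globally bounded on $L^2$ with norm depending continuously on $t$, so no domain issues arise and the standard stability theorem for Fredholm operators applies verbatim. I would therefore spend a sentence pinning down the reference (the same one underlying Lemma~\ref{L:stbind}) and then conclude.
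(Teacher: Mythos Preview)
Your proposal is correct and follows essentially the same approach as the paper: the difference $P_{j,t}-P_{j,t'}=(t-t')f_j\Phi_j$ is a bounded operator depending continuously on the parameter, so the stability of the Fredholm index (exactly as in Lemma~\ref{L:stbind}) gives that $\ind P_{j,t}$ is independent of $t$. Your additional remarks about the common domain and the grading being preserved are accurate elaborations, but the paper's proof is content with the one-line observation and the reference to Lemma~\ref{L:stbind}.
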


\begin{proof}
Using a similar argument as in the proof of Lemma \ref{L:stbind}, for any $t,t'\in[0,\infty)$,  $P_{j,t}-P_{j,t'}=(t-t')f_j\Phi_j$ are bounded operators depending continuously on $t-t'\in\RR$. By the stability of the index of a Fredholm operator, $\ind P_{j,t}$ are independent of $t$. Then the lemma follows from setting $t=b$ and $t=0$.
\end{proof}

\subsection{Proof of Theorem \ref{T:relindthm}}\label{SS:pf-relindthm}

Applying the construction of Subsection~\ref{SS:setting} to the operators $P_{1,b}$ and $P_{2,b}$ we obtain operators $P_{3,b}$ and $P_{4,b}$ on $M_3$ and $M_4$ respectively. By Lemma~\ref{L:ind-perturbation} the indexes of these operators are equal to the indexes of $P_3$ and $P_4$ respectively. It follows that it is enough to prove Theorem~\ref{T:relindthm} for operators $P_{j,b}$, $j=1,\ldots,4$. In other words it is enough to prove the theorem for the case when $\Sigma$ satisfies  Assumption \ref{A:gluing}.(\romannumeral2). Then we can apply Corollary \ref{C:additivity}.

From now on we assume that $\Sigma$ satisfies Assumption \ref{A:gluing}.(\romannumeral2) for operators $P_1$ and $P_2$. 

As in Section \ref{S:gluing}, we can define operators $P_{\Sigma_j,j},\ j=1,2$.  Let $P_j',P_j''$ be the restrictions of $P_{\Sigma_j,j}$ to $M_j',M_j''$. By Corollary \ref{C:additivity},
\[
\ind P_1\ =\ \ind P_1'+\ind P_1'',
\]
\[
\ind P_2\ =\ \ind P_2'+\ind P_2''.
\]
Similarly, we also have
\[
\ind P_3\ =\ \ind P_1'+\ind P_2'',
\]
\[
\ind P_4\ =\ \ind P_2'+\ind P_1''.
\]
Combining these four equations, we get 
\[
\ind P_1+\ind P_2\ =\ \ind P_3+\ind P_4
\]
and complete the proof.\hfill $\square$

\begin{bibdiv}
\begin{biblist}

\bib{Anghel93}{article}{
	author={Anghel, N.},
	title={An abstract index theorem on noncompact {R}iemannian manifolds},
	date={1993},
	ISSN={0362-1588},
	journal={Houston J. Math.},
	volume={19},
	number={2},
	pages={223--237},
	review={\MR{1225459 (94c:58193)}},
}

\bib{Anghel93-2}{article}{
	author={Anghel, N.},
	title={On the index of {C}allias-type operators},
	date={1993},
	ISSN={1016-443X},
	journal={Geom. Funct. Anal.},
	volume={3},
	number={5},
	pages={431--438},
	url={http://dx.doi.org/10.1007/BF01896237},
	review={\MR{1233861 (94m:58213)}},
}

\bib{BottSeeley78}{article}{
	Author = {Bott, R.},
	author = {Seeley, R.},
	Journal = {Comm. Math. Phys.},
	Number = {3},
	Pages = {235--245},
	Title = {Some remarks on the paper of {C}allias: ``{A}xial anomalies and index theorems on open spaces'' [{C}omm. {M}ath. {P}hys. {\bf 62} (1978), no. 3, 213--234;},
	Volume = {62},
	Year = {1978}
}

\bib{Br-index}{article}{
	author={Braverman, M.},
	title={Index theorem for equivariant {D}irac operators on noncompact manifolds},
	date={2002},
	journal={$K$-Theory},
	volume={27},
	number={1},
	pages={61--101},
}

\bib{Br-cobinv}{article}{
	author={Braverman, M.},
	title={New proof of the cobordism invariance of the index},
	date={2002},
	journal={Proc. Amer. Math. Soc.},
	volume={130},
	number={4},
	pages={1095--1101},
}

\bib{Br-cobtr}{book}{
	Address = {Providence, RI},
	Author = {Braverman, M.},
	Publisher = {American Mathematical Society},
	Series = {{A}ppendix {J} in the book ``{Moment} {Maps}, {Cobordisms}, and {Hamiltonian} {Group} {Actions}'' by {V}.~{L}.~{G}inzburg and {V}.~{G}uillemin and {Y}.~{K}arshon. Mathematical Surveys and Monographs},
	Title = {Cobordism invariance of the index of a transversely elliptic operator},
	Volume = {98},
	Year = {2002}}

\bib{BrCano14}{book}{
	author={Braverman, M.},
	author={Cano, L.},
	title={Index theory for non-compact {$G$}-manifolds},
	series={Geometric, algebraic and topological methods for quantum field theory}, 
	publisher={World Sci. Publ., Hackensack, NJ},
	date={2014},
	note={pp. 60--94},
	review={\MR{3204959}},
}

\bib{BruningMoscovici}{article}{
	Author = {Bruning, J.}
	Author = {Moscovici, H.},
	Journal = {Duke Math. J.},
	Number = {2},
	Pages = {311--336},
	Title = {{$L^2$}-index for certain {D}irac-{S}chr\"odinger operators},
	Volume = {66},
	Year = {1992}
}

\bib{Bunke}{article}{
	author = {Bunke, U.},
	title = {A {$K$}-theoretic relative index theorem and {C}allias-type {D}irac operators},
	date= {1995},
	journal= {Math. Ann.},
	volume= {303},
	number = {2},
	pages= {241--279},
}

\bib{Callias78}{article}{
	Author = {Callias, C.},
	Journal = {Comm. Math. Phys.},
	Number = {3},
	Pages = {213--235},
	Title = {Axial anomalies and index theorems on open spaces},
	Volume = {62},
	Year = {1978}
}

\bib{CarvalhoNistor14}{article}{
	Author = {Carvalho, C.},
	Author = {Nistor, V.},
	Journal = {The Journal of Geometric Analysis},
	Keywords = {index, K-theory},
	Number = {4},
	Pages = {1808-1843},
	Publisher = {Springer US},
	Title = {An Index Formula for Perturbed Dirac Operators on Lie Manifolds},
	Volume = {24},
	Year = {2014}
}

\bib{GGK96}{article}{
	author={Ginzburg, V.~L.},
	author={Guillemin, V.},
	author={Karshon, Y.},
	title={Cobordism theory and localization formulas for {H}amiltonian group actions},
	date={1996},
	journal={Int. Math. Res. Notices},
	volume={5},
	pages={221--234},
}

\bib{GL}{article}{
	author={Gromov, M.},
	author={Lawson, H.~B., Jr.},
	title={Positive scalar curvature and the {D}irac operator on complete {R}iemannian manifolds},
	date={1983},
	ISSN={0073-8301},
	journal={Inst. Hautes \'Etudes Sci. Publ. Math.},
	number={58},
	pages={295--408},
}

\bib{GGK-book}{book}{
	author={Guillemin, V.},
	author={Ginzburg, V.~L.},
	author={Karshon, Y.},
	title={Moment maps, cobordisms, and {H}amiltonian group actions},
	series={Mathematical Surveys and Monographs},
	publisher={American Mathematical Society, Providence, RI},
	date={2002},
	volume={98},
	note={Appendix J by Maxim Braverman},
}

\bib{Hardt00}{article}{
	author={Hardt, V.},
	author={Konstantinov, A.},
	author={Mennicken, R.},
	title={On the spectrum of the product of closed operators},
	date={2000},
	journal={Math. Nachr.},
	volume={215},
	number={1},
	pages={91--102},
}

\bib{Kottke11}{article}{
	Author = {Kottke, C.},
	Journal = {J. K-Theory},
	Number = {3},
	Pages = {387--417},
	Title = {An index theorem of {C}allias type for pseudodifferential operators},
	Volume = {8},
	Year = {2011}
}

\bib{Kottke15}{article}{
	Author = {Kottke, C.},
	Journal = {Comm. Partial Differential Equations},
	Number = {2},
	Pages = {219--264},
	Title = {A {C}allias-type index theorem with degenerate potentials},
	Volume = {40},
	Year = {2015}
}

\bib{ReSi2}{book}{
	author={Reed, M.},
	author={Simon, B.},
	title={Methods of modern mathematical physics {\upper{\romannumeral2}}: Fourier Analysis, Self-Adjointness},
	publisher={Academic Press, London},
	date={1975},
}

\bib{ReSi4}{book}{
	author={Reed, M.},
	author={Simon, B.},
	title={Methods of modern mathematical physics {\upper{\romannumeral4}}: {Analysis} of operators},
	publisher={Academic Press, London},
	date={1978},
}

\bib{Shubin96Morse}{article}{
	author={Shubin, M.},
	title={Semiclassical asymptotics on covering manifolds and {M}orse inequalities},
	date={1996},
	journal={Geom. Funct. Anal.},
	number={6},
	pages={370--409},
}

\bib{Wimmer14}{article}{
	Author = {Wimmer, R.},
	Journal = {Comm. Math. Phys.},
	Number = {1},
	Pages = {117--149},
	Title = {An index for confined monopoles},
	Volume = {327},
	Year = {2014}
}

\end{biblist}
\end{bibdiv}

\end{document}